\def\sklfrac#1#2{(#1/#2)}
\newcommand{\lleft}{\left}
\newcommand{\rrvert}{\vert}
\newcommand{\rright}{\right}
\newcommand{\llvert}{\vert}
\newtheorem{teo}{Theorem}[section]
\newtheorem{cor}[teo]{Corollary}
\newtheorem{lem}[teo]{Lemma}
\newtheorem{prop}[teo]{Proposition}
\renewcommand{\mid}{|}
\renewcommand{\P}{\mathbb{P}}
\newcommand{\E}{\mathbb{E}}
\newcommand{\R}{\mathbb{R}}
\newcommand{\N}{\mathbb{N}}
\newcommand{\1}{\mathbh{1}}
\newcommand{\eps}{\varepsilon}
\newcommand{\Z}{\mathbb{Z}}
\newcommand{\Prob}{\operatorname{Prob}}
\newcommand{\Om}{\Omega}
\newcommand{\la}{\lambda}
\newcommand{\calE}{\mathcal{E}}
\newcommand{\calH}{\mathcal{H}}
\newcommand{\calM}{\mathcal{M}}
\newcommand{\cT}{\mathcal{T}}
\begin{document}
\begin{frontmatter}

\title{Intermittency for branching random walk in~Pareto environment}
\runtitle{Intermittency for BRWRE}

\begin{aug}
\author[A]{\fnms{Marcel}~\snm{Ortgiese}\ead[label=e1]{marcel.ortgiese@uni-muenster.de}}
\and
\author[B]{\fnms{Matthew I.}~\snm{Roberts}\corref{}\thanksref{T1}\ead[label=e2]{mattiroberts@gmail.com}}
\runauthor{M.~Ortgiese and M.~I.~Roberts}
\affiliation{Westf\"alische Wilhelms-Universit\"at M\"unster and
University of Bath}
\address[A]{Institut f\"ur Mathematische Statistik\\
Westf\"alische Wilhelms-Universit\"at M\"unster\\
Einsteinstra\ss{}e 62\\
48149 M\"unster\\
Germany\\
\printead{e1}}
\address[B]{Department of Mathematical Sciences\\
University of Bath\\
Claverton Down\\
Bath BA2 7AY\\
United Kingdom\\
\printead{e2}}
\end{aug}
\thankstext{T1}{Supported in part by an EPSRC postdoctoral fellowship
(EP/K007440/1).}

%
\received{\smonth{5} \syear{2014}}
%
\revised{\smonth{2} \syear{2015}}

%
\begin{abstract}
We consider a branching random walk on the lattice, where the branching
rates are given by an i.i.d. Pareto random potential. We describe the
process, including a detailed shape theorem, in terms of a system of
growing lilypads. As an application we show that the branching random
walk is intermittent, in the sense that most particles are concentrated
on one very small island with large potential. Moreover, we compare the
branching random walk to the parabolic Anderson model and observe that
although the two systems show similarities, the mechanisms that control
the growth are fundamentally different.
\end{abstract}

%
\begin{keyword}[class=AMS]
\kwd[Primary ]{60K37}
\kwd[; secondary ]{60J80}
\end{keyword}
\begin{keyword}
\kwd{Branching random walk}
\kwd{random environment}
\kwd{parabolic Anderson model}
\kwd{intermittency}
\end{keyword}
\end{frontmatter}

\section{Introduction and main results}

\subsection{Introduction}\label{sec1}
Branching processes in random environments are a classical subject
going back to~\cite{SmithWilkinson69,Wilkinson67}. We are interested
in branching random walks (BRW), where particles branch but also have
spatial positions and are allowed to migrate to other sites.

We consider a particular variant of the model defined on $\Z^d$. Start
with a single particle at the origin. Each particle performs a
continuous-time nearest-neighbor symmetric random walk on $\Z^d$. When
at site $z\in\Z^d$, a particle splits into two new particles at rate
$\xi(z)$, where the potential $(\xi(z), z \in\Z^d)$ is a
collection of nonnegative i.i.d. random variables. The two new
particles then repeat the stochastic behavior of their parent, started
from $z$. This particular model was first described in~\cite{GM90},
although until now analysis has concentrated on the expected number of
particles: see the surveys \cite{GK05,M11,KW14}. In this article we
show that while the study of the actual number of particles is more
technically demanding, it is still tractable, and reveals surprising
and interesting behavior which warrants further investigation.

We begin by recalling what is known about the expected number of
particles. More precisely, we fix a realization of the environment
$(\xi(z), z \in\Z^d)$ and take expectations over migration and
branching. We denote the expected number of particles by
\[
u(z,t) = E^\xi\bigl[ \# \{ \mbox{particles at site } z \mbox{ at
time } t \} \bigr].
\]
The superscript $\xi$ indicates that this expression is still random
due to its dependence on the environment. By considering the different
possibilities in a infinitesimal time step, one can easily see that
$u(z,t)$ solves the following stochastic partial differential equation,
known as the \emph{parabolic Anderson model} (PAM):
\begin{eqnarray*}
\partial_t u(z,t) & =& \Delta u(z,t) + \xi(z) u(z,t)\qquad\mbox{for
}z \in\Z^d, t \geq0,
\\
u(z,0) & =& \1_{\{z = 0 \}}\qquad\mbox{for } z \in\Z^d.
\end{eqnarray*}
Here, $\Delta$ is the discrete Laplacian defined for any function $f\dvtx
\Z^d \rightarrow\R$ as
\[
\Delta f(z) = \sum_{y \sim z} \bigl(f(y) - f(z)\bigr),
\qquad z \in\Z^d,
\]
where we write $y \sim z$ if $y$ is a neighbor of $z$ on the lattice
$\Z^d$. Starting with the seminal work of~\cite{GM90} the PAM has
been intensively studied in the last twenty years. Much interest stems
from the fact that it is one of the more tractable models to exhibit an
effect called \emph{intermittency}, which roughly means that the
solution is concentrated in a few peaks that are spatially well
separated. For the PAM this effect is well understood: see the
surveys~\cite{GK05,M11,KW14}. The size and the number of peaks depends
essentially on the tail of $\xi$, that is, on the decay of $\P(\xi
(0) > x )$ for large $x$. For a bounded potential the size of the
relevant islands grows with $t$. In the intermediate regime, when the
potential is double exponentially distributed, the size of the islands
remains bounded. Finally, it is believed, and in a lot of cases proven,
that for any potential with heavier tails, there is a single island
consisting of a single point containing almost all of the mass. In the
most extreme case when the potential is Pareto distributed, a very
detailed understanding of the evolution of the solution has emerged;
see~\cite{HMS08,KLMS09,MOS11}.

While the expected number of particles, that is, the PAM, is well
understood, a lot less is known for the actual number of particles in
the branching random walk. The only results so far for this particular model
concern higher moments of particles
\[
E^\xi\bigl[ \# \{ \mbox{particles at site } z \mbox{ at time } t
\}^n \bigr].
\]
These were studied in a special case by~\cite{ABMY00} using analytic
methods and for a larger class of potentials and providing finer
asymptotics in~\cite{GKS13} using spine methods for higher moments as
developed in~\cite{HR11}.

\subsection{Main result}

Motivated by the detailed understanding of the\break  parabolic Anderson model
in the case of Pareto potentials, we from now on assume that $\{ \xi
(z), z \in\Z^d\}$
is a collection of independent and identically distributed Pareto
random variables.
Denoting the underlying probability measure on $(\Om, \mathcal{F})$
by $\Prob$,
we have in particular that for a parameter $\alpha> 0$, and any $z \in
\Z^d$,
\[
\Prob\bigl( \xi(z) > x \bigr) = x^{- \alpha} \qquad\mbox{for all } x \geq1.
\]
Moreover, we assume throughout that $\alpha> d$, which is a necessary
condition for the total mass in
the PAM to remain finite; see \cite{GM90}.

For a fixed environment $\xi$, we denote by $P_y^\xi$ the law of the
branching simple random walk in continuous time with binary branching
and branching rates $\{\xi(z), z \in\Z^d\}$ started with a single
particle at site $y$.
Finally, for any measurable set $F \subset\Om$, we define
\[
\P_y ( F \times\cdot) = \int_F
P_y^\xi( \cdot) \Prob(d \xi).
\]
If we start with a single particle at the origin, we omit the subscript
$y$ and simply write
$P^\xi$ and $\P$ instead of $P_0^\xi$ and $\P_0$.

We define
$Y(z,t)$ to be the set of particles at the point $z$ at time $t$.
Moreover, we let $Y(t)$ be the set of all particles present
at time $t$.
We are interested in the number of particles
\[
N(z,t) = \# Y(z,t)\quad\mbox{and}\quad N(t) = \# Y(t).
\]
The aim of this paper is to understand the long-term evolution of
the branching random walk, and we therefore introduce\vspace*{1pt} a rescaling
of time by a parameter $T > 0$. We also rescale space
and the potential.
If $q=\frac{d}{\alpha-d}$, the right scaling factors for the
potential, respectively, space,
turn out to be
\[
a(T) = \biggl(\frac{T}{\log T} \biggr)^q \quad\mbox{and} \quad
r(T) = \biggl(\frac{T}{\log T} \biggr)^{q+1}.
\]
This scaling is the same as that used in the parabolic Anderson model
(cf. \cite{HMS08,KLMS09})
and guarantees the right balance between the peaks of the potential and
the cost of
reaching the corresponding sites.
We now define the rescaled lattice as
\[
L_T = \bigl\{ z \in\R^d\dvtx  r(T) z \in\Z^d
\bigr\},
\]
and for $z\in\R^d$, $R\geq0$ define $L_T(z,R) = L_T\cap B(z,R)$
where $B(z,R)$ is the open ball of radius $R$ about $z$ in $\R^d$. For
$z \in L_T$, the rescaled potential is given by
\[
\xi_T(z) = \frac{\xi(r(T) z)}{a(T)},
\]
and we set $\xi_T(z) = 0$ for $z \in\R^d \setminus L_T$.
The correct scaling for the number of particles at $z$ is given by
\[
M_T(z,t) = \frac{1}{a(T)T} \log_+ N\bigl(r(T) z,t T\bigr).
\]
We will see that in order to bound the rescaled number of particles
$M_T(z,t)$, we first have to understand at what time $z$ is hit. We
therefore introduce
the hitting time of a point $z \in L_T$ as
\[
H_T(z) = \inf\bigl\{t>0\dvtx  Y\bigl(r(T) z,tT\bigr) \neq\varnothing
\bigr\}.
\]

Our main result states that we can predict the behavior of the
branching random walk
purely in terms of the potential. For this purpose we introduce the
\emph{lilypad model}.

For any site $z \in L_T$, we set
\[
h_T(z) = \mathop{\inf_{y_0,\ldots,y_n \in L_T\dvtx }}_{y_0 = z, y_n = 0}
\Biggl( \sum_{j=1}^{n} q\frac{\llvert   y_{j-1} - y_{j}\rrvert  }{\xi_T(y_{j})}
\Biggr),
\]
where throughout $\llvert  \cdot\rrvert  $ will denote the $\ell_1$-norm on $\R^d$.
We call $h_T(z)$ the first hitting time of $z$ in the lilypad model.
We think of each site $y$ as being home to a lilypad, which grows at
speed $\xi_T(y)/q$. Note that $h_T(0)=0$, so that the lilypad at the
origin begins growing at time $0$, but other lilypads only begin to
grow once they are touched by another lilypad. For convenience, we set
$h_T(z) = h_T([z]_T)$ for any point $z \in\R^d\setminus L_T$, where
\[
[z]_T = \biggl( \frac{\lfloor r(T) z_1 \rfloor}{r(T)}, \ldots, \frac
{\lfloor r(T) z_d \rfloor}{r(T)}
\biggr) \qquad\mbox{for any } z = (z_1, \ldots, z_d) \in
\Z^d.
\]
This system of hitting times is an interesting model in its own right,
describing a first passage percolation model on $\Z^d$.

Although there are no ``particles'' in this system of growing lilypads,
we define
\[
m_T(z,t) = \sup_{y \in L_T}\bigl\{
\xi_T(y) \bigl(t-h_T(y)\bigr)_+ - q\llvert z-y\rrvert
\bigr\},
\]
which we think of as the rescaled number of particles in the lilypad
model. We will show that with high probability its value matches very
closely that of $M_T(z,t)$.

We will give a heuristic explanation for these definitions in
Section~\ref{seheuristics}. For an idea of how the system evolves,
see Figures~\ref{figlily1} to~\ref{figlily6}, where we plot the
growth of the sites hit as time advances. A simulation of the process
can be seen at \surl{http://tiny.cc/lilypads}.

%
\begin{figure}

\includegraphics{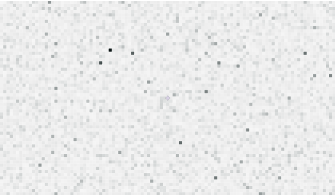}

\caption{The grey squares represent the potential: dark means large
potential. A lilypad starts to grow from near the origin.}\label{figlily1}
\end{figure}

\begin{figure}

\includegraphics{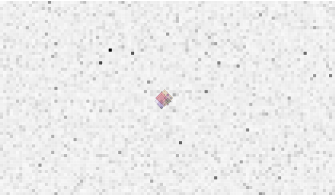}

\caption{Some more points of reasonable potential are hit and a number
of other visible lilypads are launched.}\label{figlily2}
\end{figure}

\begin{figure}

\includegraphics{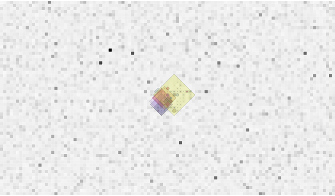}

\caption{A point of larger potential is hit and its lilypad, in
yellow, grows faster.}\label{figlily3}
\end{figure}

\begin{figure}

\includegraphics{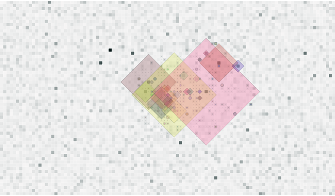}

\caption{Space is covered quickly and the dark spots in the top left
will soon be hit.}
\label{figlily4}
\end{figure}

\begin{figure}

\includegraphics{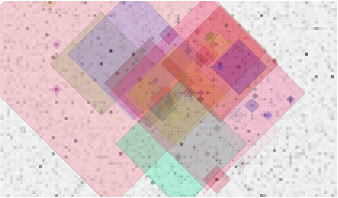}

\caption{The darkest spot is hit and launches a fast-growing pink lilypad.}
\label{figlily5}
\end{figure}

\begin{figure}[b]

\includegraphics{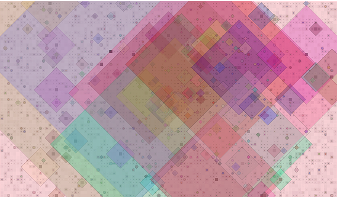}

\caption{The whole visible region is covered.}\label{figlily6}
\end{figure}

%
\begin{teo}[(Approximation by lilypad model)]\label{teolilypad} For any
$t_\infty> 0$,
as \mbox{$T \rightarrow\infty$},
\begin{eqnarray*}
&&\sup_{t \leq t_\infty} \sup_{z \in L_T} \bigl\llvert
M_T(z,t) - m_T(z,t) \bigr\rrvert \rightarrow0 \qquad
\mbox{in } \P\mbox{-probability}.
\end{eqnarray*}
Moreover, for any $R > 0$, as $T \rightarrow\infty$,
\[
\sup_{z \in L_T(0,R)} \bigl\llvert H_T(z) -
h_T(z) \bigr\rrvert \rightarrow0 \qquad\mbox {in } \P
\mbox{-probability}.
\]
\end{teo}

\begin{rmks*}
(1)~\emph{The lilypad model is well defined.} It is not a
priori clear that the hitting times $\{ h_T(z) \}$ are well defined.
However, we will show in Lemma~\ref{lesmalllily} that any finite
ball gets covered eventually by the lilypad model, and in Lemma~\ref
{le0805-1} that there is no explosion; that is, for any finite time
$t$ there exists $R>0$ such that the lilypad model is entirely
contained within $B(0,R)$ at time $t$.

(2) \emph{Interpretation as a first-passage percolation
model.} As mentioned above it is possible to interpret the lilypad
hitting times as a first-passage percolation model. We connect each
pair of vertices in $L_T$ via two directed edges. We associate to the
directed edge going from $x$ to $y$ the passage time $q \frac
{\llvert  x-y\rrvert  }{\xi_T(x)}$. Then $h_T(z)$ is the first passage time from $0$
to $z$.

(3) \emph{Convergence to a Poissonian model.} From extreme
value theory it can be shown that in a suitable sense
\[
\sum_{z \in L_T} \delta_{(z, \xi_T(z))} \Rightarrow
\Gamma,
\]
where $\Gamma$ is a Poisson point process on $\R^d \times\R^+$ with
intensity measure $d z \times\alpha x^{-(\alpha+1)} \,dx$. See~\cite
{HMS08} for precise statements and an application to the PAM. This
suggests that the lilypad model and therefore also the hitting times
and number of particles in the branching random walk should converge in
distribution to a version of the lilypad model defined in terms of the
Poisson point process. We will carry out the details of this analysis
in a further paper.
\end{rmks*}

\subsection{Applications of the lilypad model}
Theorem~\ref{teolilypad} tells us that the BRW is well approximated
by the lilypad model. We now describe some consequences of that
approximation. As an easy first application, we describe the support of
the branching random walk. For this we define
\[
S_T(t) = \bigl\{ z \in\R^d\dvtx  H_T
\bigl([z]_T\bigr) \leq t \bigr\} \quad\mbox{and}\quad
s_T(t) = \bigl\{ z \in\R^d\dvtx  h_T
\bigl([z]_T\bigr) \leq t \bigr\},
\]
which we think of as the support of the BRW and the lilypad model, respectively.

%
\begin{teo}\label{teosupport} If $d_{\mathrm{H}}$ denotes the Hausdorff
distance, for any $t_\infty>0$, as \mbox{$T \rightarrow\infty$},
\[
\sup_{t \leq t_\infty} d_\mathrm{H}\bigl( S_T(t),
s_T(t)\bigr) \rightarrow0 \qquad \mbox{in } \P\mbox{-probability}.
\]
\end{teo}

\begin{rmk*} Note that our definition of the support $S_T(t)$ is not the same
as
\[
\bigl\{ z \in\R^d\dvtx  Y\bigl(r(T)z, tT\bigr) \neq\varnothing\bigr\},
\]
which is the set of sites that are occupied at time $t$. For example,
$S_T(t)$ is
by definition always a connected set, since the underlying random walk
is nearest-neighbor, while the latter
set may be disconnected since particles can jump away from the bulk.
However, the two sets are almost the same in the following sense:
Theorem~\ref{teolilypad} tells us that
very shortly after a site has been visited by the BRW it will be
occupied by many particles, which ensure
that the site will be occupied from then onward.
\end{rmk*}

A more striking application of our description is that the BRW shows
intermittent
behavior: all the mass is concentrated around a single peak of the potential.

%
\begin{teo}\label{teointermittency} For $t>0$ let $w_T(t)$ be the
point in $L_T$ that maximizes $\{ m_T(z,t) \}$, where in the case of a tie
we choose arbitrarily. Then, for any fixed $t > 0$, and $\eps_T =\frac{3}q \log^{-1/4}T$,
\[
\frac{ \sum_{z \in L_T(w_T(t), \eps_T)} N(r(T)z,tT) }{ \sum_{z \in
L_T} N(r(T)z,tT) } \rightarrow 1 \qquad\mbox{in } \P\mbox{-probability}.
\]
\end{teo}

\begin{rmk*} \emph{One point localization and further extensions}. The
above theorem tells us that with high probability almost all of the
mass is contained within a small ball about $w_T(t)$. In fact, with
high probability almost all of the mass is contained actually at the
single site $w_T(t)$. Proving this is more difficult and will be
carried out in a further paper. Other, even more delicate results are
known for the behavior of the PAM, including the almost sure
fluctuations of the process (see \cite{KLMS09}), and we plan to
address the corresponding questions for the BRW in future work. We will
also postpone to future work a detailed description of further
properties that can be described by the lilypad model. These include a
description of genealogies of particles as well as aging for the process.
\end{rmk*}

\subsection{The parabolic Anderson model revisited}

We recall that the expected number of particles at a site $z$ at time $t$
is given by $u(z,t)$, the solution of the parabolic Anderson model.
As pointed out in the \hyperref[sec1]{Introduction}, the parabolic
Anderson model has been studied extensively, and a reader familiar with
the literature
will recognize that our predictions in terms of the lilypad model do
not resemble
those for the parabolic Anderson model. This raises the natural question
of how different the actual number of particles is from the expected number.

We will make this comparison more transparent
by first considering the support of the branching random walk. We
already know from
Theorem~\ref{teosupport} that the support is described by the lilypad model.
Without this description, a naive guess for the support of the BRW
would be that a site gets hit roughly as soon as the expected number of
particles, that is, the solution of the PAM, at that site becomes
larger than $1$. We show that this guess is dramatically wrong.

Previous work on the PAM has focused on showing, for example, one-point
localization, but
to understand the expected ``support'' we need information on the
growth at every site, not just those with large potential. It turns out
that by a simple version of our arguments for the BRW, we can also
describe the profile of the PAM.

For this, we define the growth rate of particles and ``hitting time''
at a site $z \in L_T$ for the PAM as
\[
\Lambda_T(z,t) = \frac{1}{a(T)T} \log_+ u\bigl(r(T) z,tT\bigr)
\]
and
\[
\cT_T(z) = \inf\bigl\{ t \geq0\dvtx  u\bigl( r(T) z, tT
\bigr) \geq1 \bigr\}.
\]
In a similar fashion to the lilypad model for the BRW, we can define
the \emph{PAM lilypad model}
by specifying the ``number of particles'' as
\[
\la_T(z,t) = \sup_{ y \in L_T} \bigl\{
\xi_T(y) t - q \llvert y\rrvert - q \llvert z-y\rrvert \bigr\} \vee0.
\]
Moreover, the ``hitting time'' for the PAM lilypad model is given by
\[
\tau_T(z) = \inf_{y \in L_T} \biggl\{ q
\frac{\llvert  y\rrvert  }{\xi_T(y)} + q \frac{\llvert  z-y\rrvert  }{\xi_T(y) } \biggr\}.
\]
We can also describe the support of the PAM and its lilypad model,
which we define, respectively, as
\[
S_T^{\mathrm{PAM}}(t) = \bigl\{ z \in\R^d\dvtx
\cT_T\bigl([z]_T\bigr) \leq t \bigr\}\quad\mbox{and}\quad
s_T^{\mathrm{PAM}}(t) = \bigl\{ z \in\R^d\dvtx
\tau_T\bigl([z]_T\bigr) \leq t \bigr\}.
\]

%
\begin{teo}\label{teoPAM}
For any $R, t_\infty> 0$, the following hold as $T \rightarrow\infty$:
\begin{longlist}[(iii)]
\item[(i)] $\sup_{t \leq t_\infty} \sup_{z \in L_T} \llvert   \Lambda_T(z,t) -
\lambda_T(z,t) \rrvert   \rightarrow0$ in $\P$-probability.\vspace*{2pt}

\item[(ii)] $\sup_{z \in L_T(0,R)} \llvert   \cT_T(z) - \tau_T(z) \rrvert   \rightarrow0$
in $\P$-probability.\vspace*{2pt}

\item[(iii)] $\sup_{t \leq t_\infty} d_\mathrm{H}(S_T^{\mathrm{PAM}}(t), s_T^{\mathrm{PAM}}(t))
\rightarrow0$ in $\P$-probabililty.
\end{longlist}
\end{teo}

\begin{rmks*}
(1) One can show that
\[
\la_T(z,t) = \sup_{y \in L_T} \bigl\{
\xi_T(y) \bigl( t - \tau_T(y)\bigr)_+ - q \llvert z-y
\rrvert \bigr\},
\]
which is very similar to the definition of
\[
m_T(z,t) = \sup_{y\in L_T} \bigl\{
\xi_T(y) \bigl(t-h_T(y)\bigr)_+ - q\llvert z-y\rrvert
\bigr\}.
\]

(2) We stress that although Theorem~\ref{teoPAM} is a new
result, and we provide a short and self-contained proof, it could be
proved using existing PAM technology.
\end{rmks*}

%
\begin{teo}\label{teonotconnected}
\textup{(i)} The support of the branching random walk $S_T(t)$ is
connected at all times, while
the support of the PAM is disconnected in the sense that for any $t > 0$,
\[
\liminf_{T \rightarrow\infty} \P\bigl( S_T^{\mathrm{PAM}}(t)
\mbox{ is disconnected}\bigr) > 0.
\]

\textup{(ii)} Let $W_T(t)$ be the maximizer of the branching random walk
and $W_T^{\mathrm{PAM}}(t)$ the maximizer of
the parabolic Anderson model (where possible ties are resolved arbitrarily).
Then for any $t > 0$,
%
\begin{equation}
\label{eqmaxsame} \liminf_{T \rightarrow\infty} \P \biggl(\bigl\llvert
W_T(t) - W_T^{\mathrm{PAM}}(t)\bigr\rrvert \leq
\frac{3}q \log^{-1/4}T \biggr) > 0.
\end{equation}
At the same time, for any $\kappa> 0$ and $t>0$,
\[
\liminf_{T \rightarrow\infty} \P\bigl( \bigl\llvert W_T(t) -
W_T^{\mathrm{PAM}}(t)\bigr\rrvert \geq \kappa\bigr) > 0.
\]
\end{teo}

The explanation for this behavior is that in the PAM a site $z$ outside
the current support can have such a high potential value $\xi_T(z)$
that in expectation it becomes optimal to go straight to that site
despite the high cost. This leads to exponentially large values of the
expectation in areas disconnected from the rest of the support; see
Figure~\ref{figdisconnected} for an illustration.

%
\begin{figure}

\includegraphics{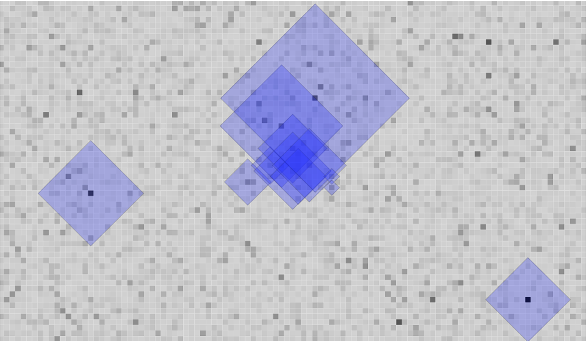}

\caption{The blue regions show the support of the PAM at a particular
time. Note that the support is disconnected.}\label{figdisconnected}
\end{figure}

However, the branching random walk can only spread at a speed that
depends on the values of the potential at sites that it has already
visited. Therefore its support remains connected and particles cannot
jump ahead to profit from larger values of the potential.

For the second part of the theorem we show that there are scenarios
when the BRW can catch up with the PAM. On the other hand, we can show
that there are times when the maximizers are spatially separated. See
also Figures~\ref{figsame},~\ref{figdifferent},~\ref{figsame1d}
and~\ref{figdifferent1d} for an illustration.

%
\begin{figure}

\includegraphics{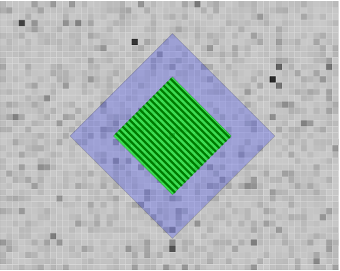}

\caption{Support of BRW (striped green) and PAM (blue) with same
maximizer in $\Z^2$.}\vspace*{12pt}
\label{figsame}
%

\includegraphics{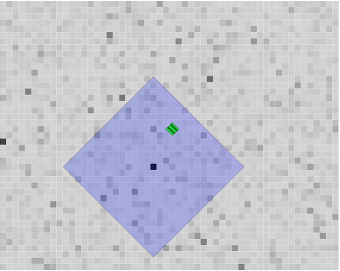}

\caption{Support of BRW (striped green) and PAM (blue) with different
maximizers in $\Z^2$.}\vspace*{12pt}
\label{figdifferent}
%

\includegraphics{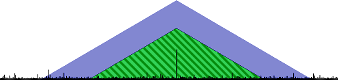}

\caption{Number of particles in BRW (striped green) and PAM (blue)
with same maximizer in $\Z$.}\vspace*{12pt}
\label{figsame1d}
%

\includegraphics{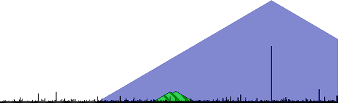}

\caption{Number of particles in BRW (striped green) and PAM (blue)
with different maximizers in~$\Z$.}\label{figdifferent1d}
\end{figure}

\subsection{Related work}
There are several natural ways of letting a random environment
influence the evolution of branching random walks. One possibility,
\mbox{introduced} in~\cite{DF83}, is to model spatial heterogeneity by
associating to each site a randomly sampled offspring distribution.
Alternatively, the offspring distribution can also vary in time as an
space--time i.i.d. sequence (see, e.g.,~\cite
{BGK05,Yoshida08,HY09,CY11}), or even both the motion of the particles
and the offspring distribution can be influenced by the environment;
see, for example,~\cite{CP07a}.\vadjust{\goodbreak}

Closely related to our model is a branching random walk on $\Z^d$ in
discrete time with a spatial i.i.d. offspring distribution. Here, much
more is known about the number of particles. In their early work on
this subject for $d=1$, Greven and den~Hollander \cite{GH92} and Baillon et al.~\cite
{BCGH93} start with an infinite population and describe the local
and global growth rates in terms of a variational problem (depending on
a drift in the underlying random walk). Many other authors address the
question of survival (see, e.g.,~\cite{BGK09,GMPV10}) and recurrence
vs. transience; see, for example,~\cite{CMP98,CP07a,Mueller08b,Mueller08a}.

Since our interest is in the effect of heavy-tailed environments, we
assume that the branching rates are bounded away from zero and thus
avoid the issue of recurrence and transience. Indeed we see that as
soon as a site is occupied, there are almost immediately exponentially
many particles, and we focus on analyzing the growth of the branching
process by describing when sites are hit and how the number of
particles evolves thereafter. We find that for our choice of potential
the sites that are hit, as well as the local growth rates, are---even
after rescaling appropriately---random.
Furthermore, we will show that in our case the growth rates for the
actual number of particles deviates dramatically from those for the
expected number.
This is in sharp contrast to existing shape theorems for branching
random walks with spatial i.i.d. offspring distribution; see~\cite
{CP07a,CP07b}. More precisely, in~\cite{CP07a} it is proved that under
a uniform ellipticity condition (and only in the recurrent
case) the
rescaled set of visited sites is well approximated by a deterministic,
convex and compact set almost surely.
This is strengthened in~\cite{CP07b} under the assumption that the
environment is uniformly elliptic, and the number of immediate offspring
is uniformly bounded.
Here the authors show that the local growth rates of the quenched
expectation (i.e., without averaging over the environment) as well as
that of the actual number of particles are described by the same
deterministic function.

Compared to work on the PAM, we see several similarities, most
prominently intermittency. However, there are also stark differences,
both in the results---Theorem~\ref{teonotconnected} gives a
snapshot, but this reflects just a small part of the contrast described
more fully by the clearly distinct lilypad models---and in the
methodology. Indeed, one of the main difficulties in studying
intermittency in the PAM is that one must control all possible ``good
islands,'' whereas for the BRW we must control not just all possible
good islands but all possible \emph{paths}, or sequences of good
islands. This increases the difficulty significantly and substantial
technological innovation is required.

\subsection{Heuristics}\label{seheuristics}
It is already known from work on the PAM that if we look at the
expected number of particles at each site in $\Z^d$, the system
essentially behaves as follows: the first particle chooses an optimal
site $z$ [which will be at distance of order $r(T)$ from the origin],
runs there in a short time (order $\ll T$) and sits there for time of
order $T$ to take advantage of the large potential at $z$.

Our first question is whether the branching system follows the same
tactic. The answer is no: the probability of one particle running
distance $r(T)$ in time $\ll T$ is extremely small, and so the behavior
outlined above is effectively impossible. In expectation there is no
problem since the enormous reward more than compensates for the small
probability of the event, but without taking expectations it is clear
that in order to cover large distances, we need to have lots of
particles already present in the system.

Suppose that we have some particles at a site $z$, and that $\xi
(z)=Aa(T)$. How long does it take those particles to reach another site
$y$? If $\llvert  z-y\rrvert  =Rr(T)$, where $R\asymp1$, then the probability\vspace*{1pt} that a
single random walk started at $z$ is at $y$ at time $tT$ is
approximately $e^{-a(T)T qR}$. (The dependence on $t$ is of smaller
order, which explains why particles in the PAM run large distances in
small times.) Thus we need of the order of $e^{a(T)T q R}$ particles at
$z$ before we can reach $y$. Particles breed at rate $\xi(z)$, so
ignoring\vspace*{1pt} the motion for a moment, by time $tT$ we should have of the
order of $e^{At a(T)T}$ particles at $z$. Thus we expect that it takes
time $t\approx qR/A$ to reach $y$ from $z$.

Given the calculations above, we are drawn to the idea that once a site
$z$ is hit, particles move outward from $z$ at speed proportional to
$\xi(z)$. We imagine a growing ``lilypad'' of particles centered at
$z$ and growing outward at a constant speed. Each site hit by $z$'s
lilypad then launches its own lilypad which grows at rate proportional
to its potential. Of course if $\xi(z)$ is large, then most of the
sites hit by $z$'s lilypad have smaller potential, so their lilypads
grow more slowly and have no discernible effect. Only when $z$'s
lilypad touches a point of greater potential do we start uncovering new
terrain at a faster rate.

In reality this does not accurately describe how particles behave
because if $\xi(z)$ is large, then particles wait at $z$ until the
last possible second before running quickly to their desired
destination. Besides this, our rough calculations required the
potential at $z$
and also the distance between $y$ and $z$ to be large.
In particular we should worry about the system at small times, since
when we start with one particle at the origin, there might be no points
of large potential nearby.
Nevertheless, this collection of deterministic (given the environment)
growing lilypads does give a caricature of the dynamics of the system
that is surprisingly accurate and useful.

Now suppose that we want to know when particles first hit a fixed site
$z$. In order to hit $z$, we must find a point $y_1$ of large potential
whose lilypad has touched~$z$. We must then ensure that $y_1$ is hit
sufficiently early, so we must find a suitable point $y_2$ whose
lilypad has touched $y_1$: working backward in this way, we construct a
sequence of points leading back toward the origin, and by looking at
their potentials---together with their positions relative to one
another---we can decide when $z$ should be hit.

\subsection{Organization of the paper}
We begin with some simple estimates on random walks and branching
processes in Section~\ref{seestimates}. In Section~\ref{selilypad}
we develop some initial estimates on the behavior of the system of
lilypads outlined above. We then move on, in Section~\ref{seearly},
to give upper bounds on the number of particles in the branching random
walk, and then provide lower bounds in Section~\ref{selate}. These
are tied together in Section~\ref{seproofs} to prove Theorems~\ref
{teolilypad},~\ref{teosupport} and~\ref{teointermittency}. The
relatively straightforward proof of Theorem~\ref{teoPAM} is given in
Section~\ref{sePAM}, and then in Section~\ref{secompare} we compare
the BRW with the PAM by proving Theorem~\ref{teonotconnected}.

\subsection{Frequently used notation and terminology}

We suppose that under $P^\xi$, and under an auxiliary probability
measure $P$, we have a simple random walk $(X(u))_{u\geq0}$, started
from $0$, independent of the environment and of the branching random
walk above.

We fix $c_d,C_d >0$ such that for any $R,T>0$ with $Rr(T)>1$,
\[
c_d R^d r(T) \leq\#L_T(0,R) \leq
C_d R^d r(T).
\]

Sometimes, for events $A$ and $B$, we say ``on $A$, $\P$-almost surely
$B$ occurs.'' By this we mean that $\P(A\cap B^c)=0$.

Given $v \in Y(t)$, and $s \leq t$, we write $X_v(s)$ for the position
of the unique ancestor of $v$ that was present
at time $s$.

At the end of the article we include a glossary of frequently used
notation for reference.

\section{Simple estimates on random walks and branching processes}\label{seestimates}
We collect here a few basic results that will be needed later. Lemmas
\ref{yulelem} and~\ref{2m} will be easy results about the growth of
branching processes, and Lemmas~\ref{rwlem},~\ref{rwlem2} and~\ref
{rwlem3} give us control over simple random walks. We also give a
Chernoff bound in Lemma~\ref{lechernoff} and an estimate on the
largest values of the potential in Lemma~\ref{le0306-1}.

First we check that branching processes do not grow much slower than
they should. The following result is very basic, but will still be
useful occasionally.

%
\begin{lem}\label{yulelem}
Let $(\Upsilon_t)_{t\geq0}$ be a Yule process (a continuous-time
Galton--Watson process with 2 children at every branch) branching at
rate $r$ under an auxiliary probability measure $P$. For any $r' < r$,
there exists a constant $c$ such that
\[
P\bigl(\Upsilon_t < \exp\bigl(r' t\bigr)\bigr) \leq
c \exp\bigl(\bigl(r'-r\bigr)t/2\bigr)\qquad\mbox{for all } t\geq0.
\]
\end{lem}

\begin{pf}
Let $T_0=0$, and for $n\geq1$ let $T_n$ be the $n$th birth time of the
process, and define $V_n = T_n - T_{n-1}$. Then the random variables
$(V_n, n\geq1)$ are independent, and $V_n$ is exponentially
distributed with parameter $rn$. Thus, using Markov's inequality,
\[
P(T_n > t) = P \Biggl(\sum_{j=1}^n
V_j > t \Biggr) \leq E\bigl[e^{\sklfrac {r}2\sum_{j=1}^n V_j}\bigr]e^{-rt/2}
\leq\prod_{j=1}^n \biggl(1-
\frac
{1}{2j}\biggr)^{-1} e^{-rt/2}.
\]
However,
\[
\prod_{j=1}^n \biggl(1-
 \frac{1}{2j} \biggr)^{-1} = \exp
\Biggl(-\sum_{j=1}^n
\log \biggl(1- \frac{1}{2j} \biggr) \Biggr) \leq
\exp \Biggl(\sum_{j=1}^n
\biggl( \frac{1}{2j} + \frac{1}{2j^2} \biggr)
\Biggr) \leq c n^{1/2}
\]
for some constant $c$. Taking $n = \lfloor e^{r' t}\rfloor$, we get
\[
P\bigl(\Upsilon_t < e^{r' t}\bigr) \leq P(T_n
> t) \leq c e^{r't/2 - rt/2}.
\]
\upqed
\end{pf}

Although the previous lemma is occasionally useful, we will need a
slightly different estimate in other places. Since our particles can
move around, it will often be more useful to be able to know that the
number of particles at a single site does not grow much slower than it should.

%
\begin{lem}\label{2m}
Suppose that $\xi(0)\geq4d$. Then $P^{\xi}(N(0,t) < \frac
{1}{2}e^{(\xi(0)-2d)t}) \leq15/16$ for all $t\geq0$.
\end{lem}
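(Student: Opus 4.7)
The plan is to use a second-moment (Paley--Zygmund) argument applied to a suitable subprocess of $N(0,t)$ that has a clean product structure.

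First I would reduce to a one-site birth--death process: let $\tilde N(0,t)$ be the number of particles at the origin at time $t$ all of whose ancestors (including themselves) have stayed at the origin throughout $[0,t]$. Clearly $\tilde N(0,t) \le N(0,t)$, so it suffices to prove the bound for $\tilde N(0,t)$. Under $P^\xi$ the process $\tilde N(0,\cdot)$ is a continuous-time Markov branching process in which each individual, independently, is replaced by two at rate $\xi(0)$ and is killed (by jumping off $0$) at total rate $2d$, since the underlying walk leaves the origin to each of its $2d$ neighbours at rate~$1$.

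Next I would compute the first two moments of $\tilde N(0,t)$ from the generator $Lf(n) = \xi(0)n(f(n+1)-f(n)) + 2d\,n(f(n-1)-f(n))$. Writing $\mu := \xi(0) - 2d$, applying $L$ to $f(n)=n$ and $f(n)=n^2$ gives the ODEs $m'(t) = \mu m(t)$ and $v'(t) = 2\mu v(t) + (\xi(0)+2d) m(t)$ for $m(t)=E^\xi[\tilde N(0,t)]$ and $v(t)=E^\xi[\tilde N(0,t)^2]$. With initial condition $\tilde N(0,0)=1$ this solves to
\[
m(t) = e^{\mu t}, \qquad v(t) = e^{2\mu t}\Bigl(1 + \tfrac{\xi(0)+2d}{\mu}\bigl(1 - e^{-\mu t}\bigr)\Bigr).
\]
The hypothesis $\xi(0) \ge 4d$ forces $\mu \ge 2d$ and $\tfrac{\xi(0)+2d}{\mu} \le 3$, hence $v(t) \le 4\,e^{2\mu t} = 4\,m(t)^2$.

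Finally I would apply the Paley--Zygmund inequality $P(Z \ge \theta E[Z]) \ge (1-\theta)^2 E[Z]^2/E[Z^2]$ with $\theta = 1/2$ and $Z = \tilde N(0,t)$, which yields
\[
P^\xi\!\bigl(\tilde N(0,t) \ge \tfrac12 e^{(\xi(0)-2d)t}\bigr) \ge \tfrac14 \cdot \tfrac{m(t)^2}{v(t)} \ge \tfrac{1}{16}.
\]
Passing to the complement and using $N(0,t) \ge \tilde N(0,t)$ gives the claimed bound $15/16$. There is no real obstacle here; the only thing to be careful with is the moment ODE and the verification that $\xi(0) \ge 4d$ is precisely what is needed to make $(\xi(0)+2d)/(\xi(0)-2d) \le 3$ so that the second moment is comparable to the square of the first.
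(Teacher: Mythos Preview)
Your proof is correct and follows essentially the same approach as the paper: reduce to the birth--death process of particles that never leave the origin, compute its first two moments, bound the second moment by $4e^{2(\xi(0)-2d)t}$ using the hypothesis $\xi(0)\ge 4d$, and apply Paley--Zygmund with $\theta=1/2$. The only cosmetic difference is that you derive the second-moment ODE via the generator (forward equation), whereas the paper derives it via a first-step decomposition (backward equation); both yield the same explicit solution and the same bound.
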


\begin{pf}
Let $\mathcal{N}(t)$ be the set of particles that have not left $0$ by
time $t$, so that
$N(0,t) \geq\mathcal{N}(t)$. Clearly $\mathcal{N}$ is a birth--death
process in which each particle breeds at rate $\xi(0)$ and dies at
rate $2d$. Note that $E^{\xi}[\mathcal N(t)] = e^{(\xi(0)-2d)t}$ and,
by the Paley--Zygmund inequality,
\[
P^{\xi}\biggl(\mathcal N(t) \geq\frac{1}{2}E^{\xi}\bigl[
\mathcal N(t)\bigr]\biggr) \geq \frac{E^{\xi}[\mathcal N(t)]^2}{4E^{\xi}[\mathcal N(t)^2]}.
\]
Thus it suffices to show that $E^{\xi}[\mathcal N(t)^2] \leq4e^{2(\xi
(0)-2d)t}$. By choosing $\delta>0$ small and conditioning on what
happens by time $\delta$, if $\mathcal N^1(t)$ and $\mathcal N^2(t)$
are independent copies of $\mathcal N(t)$, then
\begin{eqnarray*}
E^{\xi}\bigl[\mathcal N(t+\delta)^2\bigr] &=&
E^{\xi}\bigl[\mathcal N(t)^2\bigr]\bigl(1-\delta \bigl(
\xi(0)+2d\bigr)\bigr)
\\
&&{} + E^{\xi}\bigl[\bigl(\mathcal N^1(t)+\mathcal
N^2(t)\bigr)^2\bigr]\delta\xi(0) + O\bigl(
\delta^2\bigr)
\end{eqnarray*}
so
\[
\frac{d}{dt}E^{\xi}\bigl[\mathcal N(t)^2\bigr] =
\bigl(\xi(0)-2d\bigr)E^{\xi}\bigl[\mathcal N(t)^2\bigr] + 2
\xi(0)e^{2(\xi(0)-2d)t}.
\]
By solving this ODE we obtain
\[
E^{\xi}\bigl[\mathcal N(t)^2\bigr] = \frac{2\xi(0)}{\xi(0)-2d}e^{2(\xi
(0)-2d)t}+
\biggl(1-\frac{2\xi(0)}{\xi(0)-2d} \biggr)e^{(\xi(0)-2d)t},
\]
which, when $\xi(0)\geq4d$, is at most $4e^{2(\xi(0)-2d)t}$, as required.
\end{pf}

Recall that $X(t),t\geq0$ is a continuous-time random walk on $\Z^d$.
We give a lower bound on the probability that $X(sT)=r(T)z$. Define
\[
\calE^1_T(s,R) = \frac{R}{\log T} (\log R - \log s)
+ \frac{2\,ds}{a(T)}.
\]

%
\begin{lem}\label{rwlem}
For $z\in L_T$ and $s>0$, $T>e$,
\[
P\bigl(X(sT) = r(T)z\bigr) \geq\exp \bigl(-a(T)T\bigl(q\llvert z\rrvert +
\calE^1_T\bigl(s,\llvert z\rrvert \bigr)\bigr) \bigr).
\]
\end{lem}

\begin{pf}
Fix a path of length $r(T)\llvert  z\rrvert  $ from $0$ to $r(T)z$. To reach $r(T)z$,
it suffices to make exactly $r(T)\llvert  z\rrvert  $ jumps by time $t$, all along our
chosen path. Thus
\[
P\bigl(X(t) = z\bigr) \geq\frac{1}{(2d)^{r(T)\llvert  z\rrvert  }} e^{-2\,dsT}
\frac
{(2\,dsT)^{r(T)\llvert  z\rrvert  }}{(r(T)\llvert  z\rrvert  )!}.
\]
Using the fact that $n! \leq n^n = \exp(n\log n)$, the above is at least
\begin{eqnarray*}
&& \exp \bigl(-2\,dsT + r(T)\llvert z\rrvert \log(sT) - r(T)\llvert z\rrvert \log
\bigl(r(T)\llvert z\rrvert \bigr) \bigr)
\\
&&\qquad \geq\exp \bigl(-r(T)\llvert z\rrvert \log\bigl(T^q\llvert z\rrvert
/s\bigr) - 2\,dsT \bigr)
\\
&&\qquad  = \exp \biggl(-a(T)T\biggl(q\llvert z\rrvert + \frac{\llvert  z\rrvert  }{\log T}\bigl(\log
\llvert z\rrvert - \log s\bigr) + \frac{2\,ds}{a(T)}\biggr) \biggr).
\end{eqnarray*}
\upqed
\end{pf}

Now we need an upper bound on the probability that $X(t)=z$. In order
to reach~$z$, a random walk must jump at least $\llvert  z\rrvert  $ times, and this
bound will be enough for us, so for $s\geq0$ and $R>0$, define
\[
J_T(s,R) = P\bigl(X(u) \mbox{ jumps at least } Rr(T) \mbox{ times
before time } sT\bigr),
\]
and let
\[
\calE^2_T(s,R) = \frac{R}{\log T}\bigl(\log s - \log
R + 1 + \log(2d) + (q+1) \log\log T\bigr).
\]

%
\begin{lem}\label{rwlem2} For any $R>0$ and $s>0$, $T>e$,
\[
J_T(s,R) \leq\exp\bigl\{ - a(T) T \bigl( q R -
\calE^2_T(s,R) \bigr)\bigr\}.
\]
\end{lem}

\begin{pf} The number of jumps that $X(u)$ makes up to time $sT$ is
Poisson distributed with parameter $2\,dsT$, so that
\[
J_T(s,R) \leq\frac{(2\,dsT)^{Rr(T)}}{(R r(T))!}.
\]
By Stirling's formula $n! \geq\exp(n\log n - n)$, giving a new upper
bound of
\begin{eqnarray*}
&& \exp \bigl( Rr(T) \bigl(- \log\bigl( Rr(T)\bigr) + 1 + \log (2\,dsT)\bigr) \bigr)
\\
&&\qquad  = \exp \bigl( - q r(T) R\log T
\\
&&\hspace*{18pt}\quad\qquad{} + r(T) R \bigl( 1 + \log s + \log(2d)+ (q+1)\log\log T - \log R\bigr) \bigr)
\\
&&\qquad  = \exp \bigl( - a(T) T \bigl( q R - \calE^2_T(s,R)
\bigr) \bigr),
\end{eqnarray*}
where we used the definitions of $r(T)$ and $a(T)$ as well as $\calE^2_T$.
\end{pf}

Our third estimate on random walks is slightly different. Instead of
looking at the probability that a random walk moves a long way in a
relatively short time, we now want to ensure that the probability a
random walk moves a short distance in a relatively long time is
reasonably large. This is a consequence of a standard local central
limit theorem; see, for example, Theorem 2.1.3 of \cite{LL10}.

%
\begin{lem}\label{rwlem3}
There exists a constant $c>0$ such that provided $\llvert  z\rrvert  \leq\sqrt t$,
\[
P\bigl(X(t)=z\bigr) \geq c t^{-d/2}.
\]
\end{lem}

The following well-known version of the Chernoff bound will also be
very useful.

%
\begin{lem}\label{lechernoff}
Suppose that $Z_1,\ldots,Z_k$ are independent Bernoulli random
variables, and let $Z = \sum_{i=1}^r Z_i$. Then
\[
P \biggl(Z\leq\frac{E[Z]}{2} \biggr)\leq\exp \biggl(-\frac{E[Z]}{8}
\biggr).
\]
\end{lem}

Finally, we give some simple estimates on the maximum of the
environment within a ball. For $R>0$, let
\[
\bar\xi_T(R) = \max_{z\in L_T(0,R)} \xi_T(z).
\]

%
\begin{lem}\label{le0306-1}
\textup{(i)} For any $T>e$, any $R>0$ and $\nu>0$,
\[
\P \bigl( \bar\xi_T(R) \leq\nu \bigr) \leq e^{- c_d R^d \nu
^{-\alpha}}.
\]

\textup{(ii)} Provided that $Rr(T)\geq1$, for any $N\geq1$ and any
$\nu>0$,
\[
\P \bigl( \#\bigl\{z \in L_T(0,R)\dvtx  \xi_T(z) \geq\nu
\bigr\} \geq N \bigr) \leq \biggl(\frac{C_d e R^d \nu^{-\alpha}}{N} \biggr)^N.
\]
\end{lem}

\begin{pf}
(i) We may assume without loss of generality that $\nu a(T)\geq1$;
otherwise all points satisfy $\xi_T(z)>\nu$. By independence,
\begin{eqnarray*}
\P \bigl( \bar\xi_T(R) \leq\nu \bigr) & =& \P\bigl(\xi
_T(0) \leq\nu\bigr)^{\#L_T(0,R)}
\\
& \leq&\bigl(1-\bigl(\nu a(T)\bigr)^{-\alpha}\bigr)^{c_d R^d r(T)^d} \leq
e^{ - c_d R^d
\nu^{-\alpha}},
\end{eqnarray*}
where the last inequality follows from the inequality $1-x\leq e^{-x}$
and the fact that $r(T)^d = a(T)^\alpha$.

(ii) The number of points in $L_T(0,R)$ with (rescaled) potential
larger than $\nu$ is dominated by a binomial random variable with $C_d
e R^d r(T)^d$ trials of success probability $\nu^{-\alpha
}a(T)^{-\alpha}$. Thus
\begin{eqnarray*}
\P \bigl( \#\bigl\{z \in L_T(0,R)\dvtx  \xi_T(z) \geq\nu
\bigr\} \geq N \bigr) &\leq& \pmatrix{C_d R^d r(T)^d
\cr N} \nu^{-N\alpha}a(T)^{-N\alpha}
\\
&\leq&\frac{(C_d R^d r(T)^d)^N}{N!} \nu^{-N\alpha} a(T)^{-N\alpha}.
\end{eqnarray*}
Since $r(T)^d = a(T)^{\alpha}$, and $N!\geq N^N e^{-N}$, we get the result.
\end{pf}

\section{First properties of the lilypad model}\label{selilypad}
There are several fairly simple facts about the environment that will
be useful to us later. We begin with some almost self-evident
observations in Section~\ref{sealtform} that nonetheless take some
time to prove rigorously: Lemma~\ref{leattain} tells us that the
infimum in the definition of our lilypad hitting times $h_T(z)$ is
attained, Lemma~\ref{leequiv} proves that the infimum may be broken
up into more manageable chunks, while Lemma~\ref{leincrease} records
properties of the potential along an optimal
path. In Section~\ref{selilypadbounds} we bound the growth of our
lilypad model: Lemma~\ref{lesmalllily} gives upper bounds on the
time required to cover a ball about the origin, and Corollary~\ref
{cosmalllily} gives a more explicit bound on the time to cover a
particular small ball. Then Lemma~\ref{le0805-1} ensures that the
lilypad model does not quickly exit large balls and thus does not
explode in finite time. We will use many of these results often,
usually without reference.

\subsection{An alternative formulation for hitting times in the lilypad model}\label{sealtform}

As mentioned earlier, we want to show that the hitting times in the
lilypad model have two equivalent formulations. We define
\[
h_T(z) = \mathop{\inf_{ y_0,\ldots,y_n \in L_T\dvtx }}_{y_0 = z, y_n = 0}
\Biggl\{ \sum_{j=1}^{n} q
\frac{\llvert   y_{j-1} - y_{j}\rrvert  }{\xi_T(y_{j})} \Biggr\}
\]
and claim that
\[
h_T(z) = \inf_{y\neq z} \biggl\{h_T(y)
+ q\frac{\llvert  z-y\rrvert  }{\xi_T(y)} \biggr\}.
\]
First we check that the infimum is attained.

%
\begin{lem}\label{leattain}
For any $T>0$, the infimum in the definition of $h_T(z)$ is attained
for some sequence $y_0,\ldots,y_n$, $\P$-almost surely.
\end{lem}

\begin{pf}
Note that for $\lambda= \frac{1}2 \frac{\alpha-d}{\alpha}$, by the
definition of $a(T)$ and $r(T)$
\begin{eqnarray*}
\P\bigl(\exists y\in B\bigl(0,e^k\bigr)\dvtx  \xi_T(y)>
e^{(1-\lambda
)k} \bigr) & \leq& C_d \bigl(r(T)e^{k}
\bigr)^d \bigl(a(T)c_k e^{(1-\lambda)k}
\bigr)^{-\alpha}
\\
& =& C_d e^{-\sklfrac{1}2 (\alpha-d) k}.
\end{eqnarray*}
Therefore, by the Borel--Cantelli lemma, there exists $K$ such that for
all $k\geq K$,  $\max_{y\in B(0,e^k)} \xi_T(y) \leq e^{(1-\lambda
)k}$. By increasing $K$ if necessary we may assume also that $\frac
{h_T(z)+1}{q} \leq e^{\lambda K}$.
Suppose for contradiction that there exists a sequence
$(y_j)_{j=0,\ldots,n}$ with $y_0 = z$ and $y_n = 0$ such that for at
least one $j$,
$y_j \notin L_T(0, e^K)$ and
\[
\sum_{j=0}^{n} q\frac{\llvert  y_{j-1}-y_j\rrvert  }{\xi_T(y_j)} \leq
h_T(z) + \frac{1}2.
\]
Define $\ell= \max\{j\dvtx  y_j \notin B(0,e^K)\}$, so that by assumption
$\ell\in\{0, \ldots, n-1\}$. Then by the triangle inequality,
\begin{eqnarray*}
h_T(z) + \frac{1}2 &\geq& \sum_{j=\ell+1}^n
q\frac{\llvert  y_{j-1}-y_j\rrvert  }{\xi
_T(y_j)} \geq q \frac{ \llvert  y_\ell\rrvert  }{ \max_{y \in L_T(0, e^{K})} \xi_T(y)} \geq q \frac{ e^{K}}{e^{(1-\lambda)K}}
\\
&=&
qe^{\lambda K}.
\end{eqnarray*}
This contradicts our choice of $K$, and we deduce that
\[
h_T(z) = \mathop{\inf_{y_0,\ldots,y_n \in L_T(0,e^{K})\dvtx }}_{y_0 = z,
y_n = 0}
\Biggl\{ \sum_{j=1}^{n} q
\frac{\llvert   y_{j-1} - y_{j}\rrvert  }{\xi
_T(y_{j})} \Biggr\}.
\]
This infimum is over a finite set, so the minimum is attained.
\end{pf}

We can now prove our alternative formulation of the hitting times.

%
\begin{lem}\label{leequiv}
$\P$-almost surely, for any $z\neq0$ and $T>0$,
\[
h_T(z) = \inf_{y\neq z} \biggl\{h_T(y)
+ q\frac{\llvert  z-y\rrvert  }{\xi_T(y)} \biggr\}.
\]
\end{lem}

\begin{pf}
Fix $z\neq0$. First suppose there exists $y$ such that
\[
h_T(y) + \frac{\llvert  z-y\rrvert  }{\xi_T(y)} < h_T(z).
\]
Then by Lemma~\ref{leattain}, there exist $n$ and $y_0,\ldots,y_n$
such that $y_0=y$, $y_n=0$ and $h_T(y) = \sum_{j=1}^n
q\llvert  y_{j-1}-y_j\rrvert  /\xi_T(y_j)$. Defining $y'_0 = z$ and for $i=0,\ldots,n$ letting $y'_{i+1} = y_i$, we have by definition of $h_T(z)$
\[
h_T(z)\leq\sum_{j=1}^{n+1} q
\frac{\llvert  y'_{j-1} - y'_j\rrvert  }{\xi_T(y'_j)} = h_T(y) + q\frac{\llvert  z-y\rrvert  }{\xi_T(y)} <
h_T(z).
\]
This is a contradiction, so we have established that
\[
h_T(z) \leq\inf_{y\neq z} \biggl
\{h_T(y) + q\frac{\llvert  y-z\rrvert  }{\xi
_T(y)} \biggr\}.
\]
For the opposite inequality, choose (by Lemma~\ref{leattain}) $n$ and
distinct $y_0,\ldots,y_n$ such that $y_0 = z$, $y_n=0$ and
\[
h_T(z) = \sum_{j=1}^n q
\frac{\llvert  y_{j-1}-y_j\rrvert  }{\xi_T(y_j)}.
\]
We claim that $h_T(y_1) = \sum_{j=2}^n q \llvert  y_{j-1}-y_j\rrvert  /\xi_T(y_j)$.
If not, then there exist $m$ and $x_0,\ldots,x_m$ such that $x_0=y_1$,
$x_m=0$ and
\[
h_T(y_1) = \sum_{j=1}^m
q\frac{\llvert  x_{j-1}-x_j\rrvert  }{\xi_T(x_j)} < \sum_{j=2}^n q
\frac{\llvert  y_{j-1}-y_j\rrvert  }{\xi_T(y_j)}.
\]
Let $y'_0 = y_0 = z$, $y'_1 = y_1$, and for $j=2,\ldots,m+1$, $y_j' =
x_{j-1}$. Then
\begin{eqnarray*}
h_T(z) & \leq&\sum_{i=1}^{m+1}
q\frac
{\llvert  y'_{i-1}-y'_i\rrvert  }{\xi_T(y'_i)} = q\frac{\llvert  y_0-y_1\rrvert  }{\xi_T(y_1)} + \sum_{j=1}^m
q\frac{\llvert  x_{j-1}-x_j\rrvert  }{\xi_T(x_j)}
\\
&<& \sum_{j=1}^n q\frac{\llvert  y_{j-1}-y_j\rrvert  }{\xi_T(y_j)} =
h_T(z).
\end{eqnarray*}
This is a contradiction, so our claim that $h_T(y_1) = \sum_{j=2}^n q
\llvert  y_{j-1}-y_j\rrvert  /\xi_T(y_j)$ holds. Then
\[
h_T(z) = h_T(y_1) + q\frac{\llvert  z-y_1\rrvert  }{\xi_T(y_1)},
\]
which completes the proof.
\end{pf}

%
\begin{lem}\label{leincrease}
Let $T>0$, $z \in L_T$, and suppose that
\[
h_T(z) = \sum_{j = 1}^n q
\frac{\llvert  y_{j-1} - y_j\rrvert  }{\xi_T(y_j)},
\]
for distinct points $y_j, j = 1, \ldots, n$ with $y_0 = z$ and $y_n = 0$.
Then, for any $k \in\{1, \ldots, n\}$,
\[
h_T(y_k) = \sum_{j = k+1}^n
q \frac{\llvert  y_{j-1} - y_j\rrvert  }{\xi_T(y_j)}.
\]
Moreover, the sequence $(\xi_T(y_j), j \geq1)$ is nonincreasing.
\end{lem}

\begin{pf} We have already shown in the proof of Lemma~\ref{leequiv} that
\[
h_T(y_1) = \sum_{j = 2}^n
q \frac{\llvert  y_{j-1} - y_j\rrvert  }{\xi_T(y_j)}.
\]
Iterating the argument shows the first statement.

For the second statement, suppose that there exists $k \in\{1,\ldots,
n-1\}$ such that
$\xi_T(y_k) < \xi_T(y_{k+1})$. We show that it is then faster to
reach $z$ without traveling
via~$y_k$. Indeed, by the triangle inequality
\begin{eqnarray*}
&&\sum_{j \in\{1, \ldots,n \}, j \neq k, k+1} q \frac{\llvert  y_{j-1} -
y_j\rrvert  }{\xi_T(y_j)} + q
\frac{\llvert  y_{k-1} - y_{k+1}\rrvert  }{\xi_T(y_{k+1})}
\\
&&\qquad \leq\sum_{j \in\{1, \ldots,n \}, j \neq k, k+1} q \frac{\llvert  y_{j-1} -
y_j\rrvert  }{\xi_T(y_j)} + q
\frac{\llvert  y_{k-1} - y_k \rrvert   + \llvert  y_{k} - y_{k+1}\rrvert  }{\xi
_T(y_{k+1})}
\\
&&\qquad < \sum_{j = 1}^n q \frac{\llvert  y_{j-1} - y_j\rrvert  }{\xi_T(y_j)} =
h_T(z),
\end{eqnarray*}
contradicting the definition of $h_T(z)$.
\end{pf}

\subsection{Bounding the lilypad model}\label{selilypadbounds}

We want to make sure that the lilypad model behaves relatively
sensibly: that small balls are covered quickly, but large balls are
not. We begin with the former statement; more precisely, we show that
for any time $t>0$, we can find a radius $R>0$ such that with high
probability, $B(0,R)$ is covered by time $t$. For $R>0$, let
\[
\bar h_T(R) = \sup_{z\in B(0,R)}h_T(z).
\]

%
\begin{lem}\label{lesmalllily} For all $k\in\N$, $T>e$ and $\gamma
\in(d/\alpha,1)$,
\[
\P \biggl( \bar h_T\bigl(2^{-k}\bigr) >
\frac{4q}{1-2^{\gamma-1}}2^{(\gamma
-1)k} \biggr) \leq\sum_{j=k}^{\infty}
e^{-c_d 2^{(\alpha\gamma-d)j}}.
\]
Moreover, for any $R>0$, $T>e$ and $\gamma\in(d/\alpha,1)$,
\[
\P \biggl( \bar h_T(R) > 2^{\gamma(k+1)}qR +
\frac{4q}{1-2^{\gamma
-1}}2^{(\gamma-1)k} \biggr) \leq\sum_{j=k}^{\infty}
e^{-c_d
2^{(\alpha\gamma-d)j}}.
\]
\end{lem}

\begin{pf}
Let $B_k = L_T(0,2^{-k})$. Define $A_k = \{ \exists z \in B_k\dvtx  \xi
_T(z) \geq2^{-\gamma k} \}$.
Then by Lemma~\ref{le0306-1}(i), for any $T>e$,
\[
\P\bigl(A_k^c\bigr) \leq e^{-c_d 2^{(\alpha\gamma-d )k}}.
\]
Thus
\[
\sum_{j = k+1}^{\infty} \P\bigl(A_j^c
\bigr) \leq\sum_{j=k+1}^{\infty} e^{-
c_d 2^{(\alpha\gamma-d)j}}.
\]
However, if $A_{k+1}, A_{k+2},\ldots$ all occur, then we may choose
$y_j \in B(0,2^{-(k+j)})$ such that $\xi_T(y_j) \geq2^{-\gamma
(k+j)}$ for each $j\geq1$. Clearly there exists $n$ such that $y_j=0$
for all $j\geq n$. Take $z\in B(0,2^{-k})$, and let $y_0 = z$. Then for
$j\geq1$,
\[
q\frac{\llvert  y_{j-1}-y_j\rrvert  }{\xi_T(y_{j})} \leq q\frac{2\cdot
2^{-(k+j-1)}}{2^{-\gamma(k+j)}} = 4q\cdot2^{(\gamma-1)(k+j)},
\]
so by the definition of $h_T(z)$,
\[
h_T(z) \leq4q\sum_{j=1}^n
2^{(\gamma-1)(k+j)} \leq\frac
{4q}{1-2^{\gamma-1}}2^{(\gamma-1)k}
\]
which proves our first claim. For the second, it suffices to observe
that if $x\in B(0,R)$ and we can find the above sequence
$y_1,y_2,\ldots,$ then by Lemma~\ref{leequiv}
\begin{eqnarray*}
h_T(x) &\leq &h_T(y_1) + q
\frac{\llvert  x-y_1\rrvert  }{\xi_T(y_1)}  \leq 4q\sum_{j=2}^n
2^{(\gamma-1)(k+j)} + q\frac{R+2^{-(k+1)}}{2^{-\gamma
(k+1)}}
\\
& \leq&2^{\gamma(k+1)}qR + \frac{4q}{1-2^{\gamma-1}}2^{(\gamma-1)k}.
\end{eqnarray*}
\upqed
\end{pf}

By choosing $\gamma= (d/\alpha+1)/2$ and $k=\frac{\psi+1}{(1-\gamma
)\log2}\log\log T$, we get the following corollary.

%
\begin{cor}\label{cosmalllily}
For large $T$ and any $\psi>0$,
\[
\P \biggl( \bar h_T\bigl(\log^{-2(\psi+1)(q+1)}T\bigr) >
\frac{4q}{1-2^{\gamma
-1}}\log^{-(\psi+1)}T \biggr) \leq T^{-1}.
\]
\end{cor}

Now we show that conversely, we can find a radius $R>0$ such that with
high probability the lilypad model does not exit $B(0,R)$ by time $t$.

%
\begin{lem}\label{le0805-1}
For any $t> 0$, provided that $Rr(T)\geq1$,
\[
\bigl\{\exists z\in L_T\setminus B(0,R)\dvtx  h_T(z)
\leq t\bigr\} \subseteq \Bigl\{ \max_{y\in L_T(0,R)}
\xi_T(y) \geq qR/t \Bigr\}.
\]
As a result,
\[
\P\bigl( \exists z \in L_T\setminus B(0,R)\dvtx  h_T(z)
\leq t\bigr) \leq C_d e q^{-\alpha}R^{d-\alpha}t^\alpha.
\]
\end{lem}

%
\begin{rmk*}
In particular, the lilypad model does not explode in finite time.
\end{rmk*}

\begin{pf*}{Proof of Lemma \ref{le0805-1}}
Let $D_T = L_T(0,R+1)\setminus L_T(0,R)$, the boundary of $L_T(0,R)$.
Let $\tilde z$ be the point with smallest lilypad hitting time in
$D_T$, that is, $h_T(\tilde z) = \min_{y \in D_T} h_T(y)$. Then from
the definition of $h_T$, any point $z\in L_T\setminus B(0,R)$ satisfies
$h_T(z) \geq h_T(\tilde z)$. For the same reason, in the definition of
$h_T(\tilde z)$, we can restrict the infimum to points $y_i$ within
$L_T(0,R)$ so that if we set $y_0 = \tilde z$,
\begin{eqnarray*}
h_T(\tilde z) & =& \mathop{\inf_{y_1,\ldots,y_n\in L_T(0,R)\dvtx }}_{y_n =
0}
\sum_{j = 1}^n q \frac{\llvert  y_{i-1} - y_i\rrvert  }{\xi_T(y_{i})} \geq q
\frac{\llvert  \tilde z\rrvert  }{\max_{y\in L_T(0,R)} \xi_T(y) }
\\
& \geq&\frac{qR}{\max_{y\in L_T(0,R)} \xi_T(y) }.
\end{eqnarray*}
Then, by the above estimate, we have that
\[
\bigl\{\exists z \in L_T\setminus B(0,R)\dvtx  h_T(z)
\leq t\bigr\} \subseteq\bigl\{ h_T(\tilde z) \leq t \bigr\}
\subseteq \biggl\{\max_{y\in L_T(0,R)} \xi _T(y) \geq
\frac{qR}{t} \biggr\}.
\]
Lemma~\ref{le0306-1}(ii), with $N=1$, then tells us that
\[
\P \biggl(\max_{y\in L_T(0,R)} \xi_T(y) \geq
\frac{qR}{t} \biggr) \leq C_d e q^{-\alpha}
R^{d-\alpha} t^{\alpha}
\]
which gives the desired bound.
\end{pf*}

\section{Upper bounds}\label{seearly}
In this section we come back to the branching random walk. We will
check that particles do not arrive anywhere earlier than they should and---as a consequence---that the number of particles at any site is
not too large. Our main tool will be the many-to-one lemma, also known
as the Feynman--Kac formula, which we introduce in Section~\ref
{semto}. We then apply this to bound the hitting times in terms of an
object $G_T$, which we go on to study in Section~\ref{seG}. The
tactic will be to give a recursive bound on $G_T$ along any sequence of
points of increasing potential, and then in Section~\ref
{secparameters} we fix a particular sequence and calculate the
resulting estimate. Finally we apply this hard work in Sections~\ref
{sesubearly} and~\ref{setoomany} to show, respectively, that
particles do not arrive early and that there are not too many particles.

\subsection{The many-to-one lemma}\label{semto}
We introduce a standard tool, sometimes called the many-to-one lemma
(in the branching process literature) and sometimes the Feynman--Kac
formula (in the Parabolic Anderson and statistical physics literature).
It gives us a way of calculating expected numbers of particles in our
branching random walk by considering the behavior of a single random
walk. Recall that under $P^{\xi}$, $(X(u))_{u\geq0}$ is a simple
random walk on $\Z^d$ independent of our branching random walk.

%
\begin{lem}[(Many-to-one lemma/Feynman--Kac formula)]\label{lemto}
If $f$ is measurable, then $\Prob$-almost surely, for any $s>0$,
\[
E^{\xi} \biggl[\sum_{v\in Y(s)} f\bigl(
\bigl(X_v(u)\bigr)_{u\in[0,s]}\bigr) \biggr] = E^{\xi}
\biggl[\exp \biggl(\int_0^s \xi\bigl(X(u)
\bigr) \,du \biggr) f\bigl(\bigl(X(u)\bigr)_{u\in[0,s]}\bigr) \biggr].
\]
\end{lem}

The interested reader may find a proof in \cite{ikeda1969branching},
or for a more modern approach \cite{HR11}. Both references give far
more general versions of this lemma, and in fact we will need one such
generalization. It is not too surprising, given that the many-to-one
lemma involves the equality of two expectations, that there is a
martingale hidden away here. For the more general version, essentially
we want to stop the martingale at a stopping time, rather than at a
fixed time $s$; but while the concept of a stopping time is simple
enough for our single random walk $(X(u))_{u\geq0}$, we need something
a bit more general for our branching random walk. This is where the
concept of a \emph{stopping line} enters. There is a whole theory
built around this idea, but we will need only the simplest part of it,
which can be deduced rather easily, avoiding a detailed discussion.
Indeed, fix $T>1$ and a point $z\in L_T$, and imagine that any particle
that hits $r(T)z$ is absorbed there, alive but no longer moving or
breeding. When working with this alternative system, we will attach a
superscript~$\sim$ to our notation, so, for example, $\tilde Y(s)$ will
be the set of particles present at time $s$ in the alternative system.

We make two observations about the alternative system. First, the
many-to-one lemma still holds, but since particles stop breeding as
soon as they hit $z$, if we define
\[
H^*_T(z) = \inf\bigl\{t\geq0\dvtx  X(tT) = r(T)z\bigr\},
\]
then we have
\begin{eqnarray*}
&& E^{\xi} \biggl[\sum_{v\in\tilde Y(tT)}  f\bigl(\bigl(
\tilde X_v(u)\bigr)_{u\in[0,tT]}\bigr) \biggr]
\\
&&\qquad  = E^{\xi} \biggl[\exp \biggl(T\int_0^{\tilde H^*_T(z)\wedge t}
\xi \bigl(\tilde X(uT)\bigr) \,du \biggr) f\bigl(\bigl(\tilde X(u)
\bigr)_{u\in[0,tT]}\bigr) \biggr],
\end{eqnarray*}
where $(\tilde X(u))_{u \geq0}$ is a simple random walk absorbed at
$r(T)z$. Second, notice that we may take the obvious coupling so that
the two systems are identical until $H_T(z)$; in particular $H_T(z)\leq
t$ if and only if $\tilde H_T(z)\leq t$.

Before we apply our two observations to prove a key lemma, we will introduce
some notation that we will use throughout this section.
We will work with a fixed large $T$, distinct points $z_1, z_2,\ldots
\in L_T$ and constants $t,t_1,t_2,\ldots\in\R^+$.
We are interested in bounding the event
\[
A_T(j,z,t) = \left\{ \matrix{ H_T(z) \leq
t, H_T(z_i)\geq H_T(z)\wedge
t_i~\forall i\leq j,
\vspace*{3pt}\cr
H_T(z_i)\geq
H_T(z)~\forall i>j} \right\}.
\]
Similarly, we define $A^*_T(j,z,t)$ for the random walk by replacing
$H_T$ by $H^*_T$ in the above definition.
Informally, $A_T(j,z,t)$ [resp., $A^*_T(j,z,t)$] is the event that $z$
is hit by time $t$ by the rescaled branching random walk (resp., the
rescaled random walk), none of the $z_{i}, i>j$ are hit before $z$ and
those $z_i, i\leq j$ that are hit before $z$ are not hit before~$t_i$.

We will bound the probability of the event $A_T(j,z,t)$ in terms of the
following key quantity:
%
\begin{equation}
\label{eqGdef} G_T(j,z,s,t) = E^{\xi} \biggl[\exp \biggl(T
\int_0^{H^*_T(z)\wedge s} \xi\bigl(X(uT)\bigr) \,du \biggr)
\1_{A^*_T(j,z,t)} \biggr],
\end{equation}
where the need for an extra parameter $s \geq0$ becomes apparent later on.

%
\begin{lem}\label{leMtO}
$\P$-almost surely, for any $T>0$, distinct points $z_1, z_2,\ldots
\in L_T$, any $z\in L_T$, any $t,t_1,t_2,\ldots\in\R$ and any $j\geq0$,
%
\[
P^{\xi}\bigl( A_T(j,z,t) \bigr) \leq E^{\xi}
\biggl[\exp \biggl(T\int_0^{H^*_T(z)} \xi\bigl(X(uT)
\bigr)\,du \biggr) \1_{A_T^*(j,z,t)} \biggr] = G(j,z,t,t).
\]
\end{lem}

\begin{pf}
All statements below hold $\P$-almost surely. By our second
observation above,
\begin{eqnarray*}
&& P^{\xi}\bigl(H_T(z)\leq t, H_T(z_i)
\geq H_T(z)\wedge t_i~\forall i\leq j,
H_T(z_i)\geq H_T(z)~\forall i>j\bigr)
\\
&&\qquad = P^{\xi}\bigl(\tilde H_T(z)\leq t, \tilde
H_T(z_i)\geq\tilde H_T(z)\wedge
t_i ~\forall i\leq j, \tilde H_T(z_i)
\geq\tilde H_T(z)~\forall i>j\bigr).
\end{eqnarray*}
Now, if some particle is to hit $z$ without hitting any of the $z_i$
too early (where ``too early'' is interpreted appropriately depending
on whether $i\leq j$), there must be a first particle to do so; so
writing $H^v_T(z)$ for the (rescaled) first time that particle $v$---or
one of its ancestors or descendants---hits $z$,
\begin{eqnarray*}
&&P^{\xi}\bigl(\tilde H_T(z) \leq t, \tilde
H_T(z_i)\geq\tilde H_T(z)\wedge
t_i ~\forall i\leq j, \tilde H_T(z_i)
\geq\tilde H_T(z)~\forall i>j\bigr)
\\
&&\qquad  \leq P^{\xi}\bigl(\exists v\in\tilde Y(tT)\dvtx  \tilde
H^v_T(z) \leq t, \tilde
H^v_T(z_i)\geq\tilde
H^v_T(z)\wedge t_i~\forall
i\leq j,
\\
&&\hspace*{179pt} \tilde H^v_T(z_i)
\geq\tilde H^v_T(z)~\forall i>j\bigr)
\\
&&\qquad  \leq E^{\xi} \biggl[\sum_{v\in\tilde Y(tT)}
\1_{\{\tilde H^v_T(z)
\leq t, \tilde H^v_T(z_i)\geq\tilde H^v_T(z)\wedge t_i
~\forall i\leq j, \tilde H^v_T(z_i)\geq\tilde
H^v_T(z)~\forall i>j\}} \biggr].
\end{eqnarray*}
We now apply the many-to-one lemma for the alternative system to the
last expectation to see that it equals
\[
E^{\xi} \biggl[\exp \biggl(T\int_0^{\tilde H^*_T(z)}
\xi \bigl(X(uT)\bigr)\,du \biggr) \1_{\biggl\{\fontsize{8.36}{12}\selectfont{\matrix{\tilde H^*_T(z) \leq t, \tilde
H^*_T(z_i)\geq\tilde H^*_T(z)\wedge t_i ~\forall i\leq
j,\cr \tilde H^*_T(z_i)\geq\tilde H^*_T(z)~\forall i>j }}\biggr\}} \biggr].
\]
But now we can use our second observation again to remove all the $\sim
$ superscripts from the above statement and deduce the desired result.
\end{pf}

Our first aim is to show that the probability that we hit $z$ early is small.
We use Lemma~\ref{leMtO}, and our tactic is to let $z_1,z_2,\ldots$
be the set of points of large potential in increasing order of $\xi$,
and work by induction on the largest $j$ such that we hit $z_j$ before
$z$. It will then be important how long we spend at $z_j$, and to
control this we will need to ``decouple'' the time in the $\exp(\cdot
)$ part of Lemma~\ref{leMtO} from the time in the indicator function,
which explains the extra parameter $s$ in the definition of~$G_T$.
We will concentrate on bounding $G_T$ in the next section, but a clue
as to how we will use it comes via the following easy corollary of
Lemma~\ref{leMtO}.

%
\begin{cor}\label{coPG}
$\P$-almost surely, for any $z\in L_T$, any $s,t \geq0$ and any
\mbox{$j\geq0$},
\begin{eqnarray*}
&&P^{\xi} \bigl(H_T(z) \leq s\wedge t, H_T(z_i)\geq H_T(z)\wedge
t_i ~\forall i\leq j, H_T(z_i)
\geq H_T(z)~\forall i>j\bigr)
\\
&&\qquad  \leq G_T(j,z,s,t).
\end{eqnarray*}
\end{cor}

\begin{pf}
By Lemma~\ref{leMtO},
\begin{eqnarray*}
&& P^{\xi}\bigl(H_T(z) \leq s\wedge t, H_T(z_i)\geq H_T(z)\wedge
t_i ~\forall i\leq j, H_T(z_i)
\geq H_T(z)~\forall i>j\bigr)
\\
&&\qquad  \leq G_T(j,z,s\wedge t,s\wedge t),
\end{eqnarray*}
but $G_T(j,z,s,t)$ is increasing in both $s$ and $t$.
\end{pf}

\subsection{Bounding $G_T$}\label{seG}
The work above will allow us to reduce the problem of proving upper
bounds to bounding $G_T(j,z,s,t)$. As mentioned above, we want to work
by induction, and the following result allows us to bound $G_T(j,\cdot
,\cdot,\cdot)$ in terms of $G_T(j-1,\cdot,\cdot,\cdot)$. Recall that
\[
J_T(t,R) = P\bigl(X(u) \mbox{ jumps at least } Rr(T) \mbox{ times
before time } tT\bigr).
\]

%
\begin{lem}\label{leG}
Suppose that $j\geq1$ and that $\xi_T(y)\leq\xi_T(z_1)\leq\xi
_T(z_2)\leq\cdots\leq\xi_T(z_j)$ for all $y\notin\{z_1,z_2,\ldots
\}$. Then $\P$-almost surely, for any $z$ and any $s,t\geq0$,
\begin{eqnarray*}
G_T(j,z,s,t)& \leq& G_T(j-1,z,s,t)
\\
&&{}+ G_T(j-1,z_j,t_j,t)e^{a(T)T\xi
_T(z_j)(s-t_j)_+}J_T
\bigl(t,\llvert z-z_j\rrvert \bigr),
\end{eqnarray*}
where for $x\in\R$, $x_+ = x\vee0$.
\end{lem}

\begin{pf}
The main idea is that either we hit $z_j$ before hitting $z$, or we do
not. In the latter case, we reduce to $G_T(j-1,z,s,t)$, and in the
former case, our best tactic is to get to $z_j$ as quickly as possible
(since it has larger potential than any other point we are allowed to
visit) and stay there for as long as we can. Getting there as quickly
as possible gives us $G_T(j-1,z_j,t_j,t)$, and staying there until time
$s$ gives us the exponential factor; then we must also at some point
run to $z$, which costs us $J_T(t,\llvert  z-z_j\rrvert  )$.

By default, all statements below hold $\Prob$-almost surely. If
$z=z_j$, then $G_T(j,z,s,t)\leq G_T(j-1,z,s,t)$, so the inequality
trivially holds. We may therefore assume that $z\neq z_j$. Note then
that either $H^*_T(z_j) > H^*_T(z)$ or $H^*_T(z_j) < H^*_T(z)$. In the
former case $A^*_T(j-1,z,t)$ occurs, and therefore
\begin{eqnarray*}
G_T(j,z,s,t) &\leq& G_T(j-1, z,s,t)
\\
&&{} + E^{\xi} \biggl[\exp \biggl(T\int_0^{H^*_T(z)\wedge s}
\xi\bigl(X(uT)\bigr) \,du \biggr) \1_{A^*_T(j,z,t)}\1_{\{H^*_T(z_j)<H^*_T(z)\}} \biggr].
\end{eqnarray*}
On $A^*_T(j,z,t)$, since $\xi_T(z_j)\geq\xi_T(z_i)$ for all $i\leq
j$ and $t_j\leq H^*_T(z_j) < H^*_T(z_i)$ for all $i>j$, we have
\begin{eqnarray*}
&& T\int_0^{H^*_T(z)\wedge s} \xi\bigl(X(uT)\bigr) \,du
\\
&&\qquad \leq T
\int_0^{H^*_T(z_j)\wedge t_j} \xi\bigl(X(uT)\bigr) \,du + a(T)T
\xi_T(z_j) (s-t_j)_+.
\end{eqnarray*}
Note also that
\begin{eqnarray*}
&&A^*_T(j,z,t)\cap\bigl\{H^*_T(z_j)<H^*_T(z)
\bigr\}
\\
&&\qquad \subseteq A^*_T(j-1,z_j,t)\cap\bigl
\{t_j\leq H^*_T(z_j)<H^*_T(z)
\leq t\bigr\}.
\end{eqnarray*}
Thus
%
\begin{eqnarray}
\label{eqG1}
G_T(j,z,s,t) &\leq& G_T(j-1,z,s,t)\nonumber
\\
&&{} + E^{\xi} \biggl[\exp \biggl(T\int_0^{H^*_T(z_j)\wedge t_j}
\xi \bigl(X(uT)\bigr) \,du \biggr) \1_{A^*_T(j-1,z_j,t)}
\\
&&\hspace*{29pt}{}\times\exp\bigl(a(T)T\xi_T(z_j) (s-t_j)_+
\bigr) \1_{\{H^*_T(z_j)<H^*_T(z)\leq
t\}} \biggr].\nonumber
\end{eqnarray}
If $(\mathcal G_u,u\geq0)$ is the natural filtration for $X$, we
observe that
\[
P^{\xi} \bigl(H^*_T(z_j)<
H^*_T(z)\leq t \mid\mathcal G_{H^*_T(z_j)T} \bigr) \leq
J_T\bigl(t,\llvert z-z_j\rrvert \bigr).
\]
Inserting this into (\ref{eqG1}) (the first part inside the
expectation is $\mathcal G_{H^*_T(z_j)T}$-measurable), we obtain
\begin{eqnarray*}
G_T(j,z,s,t)&\leq& G_T(j-1,z,s,t)
\\
&&{} + E^{\xi} \biggl[\exp \biggl(T\int_0^{H^*_T(z_j)\wedge t_j}
\xi\bigl(X(uT)\bigr) \,du \biggr) \1 _{A^*_T(j-1,z_j,t)} \biggr]
\\
&&\hspace*{10pt}{} \times\exp\bigl(a(T)T\xi_T(z_j)
(s-t_j)_+\bigr) J_T\bigl(t,\llvert
z-z_j\rrvert \bigr).
\end{eqnarray*}
We now recognize the expectation above as $G_T(j-1,z_j,t_j,t)$, which
gives us exactly the expression required.
\end{pf}

Now we have a way of reducing $j$ until it hits $0$, and so we need a
bound on $G_T(0,z,s,t)$. Recall that $\bar h_T(R) = \max_{z\in
L_T(0,R)} h_T(z)$. The lemma below gives a simple bound when $s$ is
slightly smaller than $h_T(z)$ and $z$ is outside a ball about the
origin. It may be useful to imagine applying it when $R$ is small and
$z$ is a long way from $B(0,R)$, so that $\bar h_T(R)<\delta$ and
$q(\gamma-1)\llvert  z\rrvert   + q\gamma R < 0$. (This is exactly what we shall do later.)

%
\begin{lem}\label{leVW}
Let $\gamma\in(0,1)$, $\delta>0$ and $t>0$. $\P$-almost surely, if
$\xi_T(y)\leq\bar\xi_T(R)$ for all $y\notin\{z_1,z_2,\ldots\}$,
then for $z\notin B(0,R)$, if $\gamma h_T(z)-\delta\geq0$,
\begin{eqnarray*}
&& G_T\bigl(0, z,\gamma h_T(z)-\delta,t\bigr)
\\
&&\qquad \leq\exp \bigl(a(T)T \bigl(\bar\xi_T(R) \bigl(\gamma\bar
h_T(R)-\delta\bigr) + q(\gamma-1)\llvert z\rrvert +q\gamma R +
\calE^2_T\bigl(t,\llvert z\rrvert \bigr) \bigr) \bigr).
\end{eqnarray*}
\end{lem}

\begin{pf}
We again work $\P$-almost surely throughout. Whenever $y\notin\{
z_1,\break  z_2,\ldots\}$, we have $\xi_T(y)\leq\bar\xi_T(R)$, and
therefore on $A^*_T(0,z,t)$ we have
\[
T\int_0^{H^*_T(z)\wedge(\gamma h_T(z)-\delta)} \xi\bigl(X(uT)\bigr) \,du \leq
a(T)T \bar\xi_T(R) \bigl(\gamma h_T(z)-\delta\bigr).
\]
Also,
\[
h_T(z) \leq\min_{y\in B(0,R)} \biggl
\{h_T(y) + q\frac{\llvert  z-y\rrvert  }{\xi
_T(y)} \biggr\} \leq\bar
h_T(R) + q\frac{\llvert  z\rrvert  +R}{\bar\xi_T(R)}.
\]
Thus
\begin{eqnarray*}
&& G_T\bigl(0,  z,\gamma h_T(z)-\delta,t\bigr)
\\
&&\qquad \leq E^{\xi} \bigl[\exp \bigl(a(T)T \bigl(\bar\xi_T(R)
\bigl(\gamma\bar h_T(R) -\delta\bigr) + q\gamma\llvert z\rrvert + q
\gamma R \bigr) \bigr) \1 _{A^*_T(0,z,t)} \bigr]
\\
&&\qquad  = \exp\bigl(a(T)T \bigl(\bar\xi_T(R) \bigl(\gamma\bar
h_T(R) - \delta\bigr) + q\gamma \llvert z\rrvert + q\gamma R\bigr)
\bigr) P^{\xi}\bigl(A^*_T(0,z,t)\bigr).
\end{eqnarray*}
However, $P^{\xi}(A^*_T(0,z,t))$ is at most the probability that our
random walk jumps $\llvert  z\rrvert  r(T)$ times by time $tT$, which is exactly
$J_T(t,\llvert  z\rrvert  )$. Applying Lem\-ma~\ref{rwlem2} completes the proof.
\end{pf}

\subsection{Fixing parameters}\label{secparameters}
Until now we have worked with general points $z_i$ and times $t_i$. Now
we want to specialize to our particular situation. We suppose that we
are given a fixed time $t_\infty\geq1$ and proceed to fix a variety
of parameters which we will use to ensure that the probability that
particles arrive at any point $z$ substantially before $h_T(z)\wedge
t_\infty$ is small. We choose:
\begin{itemize}
\item$\psi\in(\frac{1}2, 1)$;
\item$\delta_T = 1/(3\log^\psi T)$;
\item$\gamma_T = 1-1/(3t_\infty\log^\psi T)$;\vspace*{2pt}
\item$\theta_T = \log^{-2(\psi+1)(q+1)}T$, so that by Corollary
\ref{cosmalllily}, $\P( \bar h_T(\theta_T)>\delta_T/2)\to0$;
\item$\eta_T = (1-\gamma_T)\theta_T/3 = \log^{-\psi-2(\psi
+1)(q+1)}T/(9t_\infty)$;
\item$\rho_T = \log\log T$, so that by Lemma~\ref{le0306-1}(ii),
$\P (\bar\xi_T(\rho_T) \geq\frac{q(\rho_T-2)\gamma
_T}{t_\infty+\delta_T} )\to0$;
\item$\nu_T = \log^{-2\psi d/\alpha- 2(\psi+1)q}T$ so that by
Lemma~\ref{le0306-1}(i),
\[
\P \bigl(\bar\xi_T(\eta_T) < \nu_T
\bigr) \to0;
\]
\item$K_T = \log^{3\psi d+2(\psi+1)q\alpha}T$, so that by Lemma
\ref{le0306-1}(ii),
\[
\P\bigl(\#\bigl\{z\in B(0,\rho_T)\dvtx  \xi_T(z) \geq
\nu_T\bigr\}>K_T\bigr) \to0;
\]
\item$\beta_T = \log^{-5\psi d - 4(\psi+1)(q+1)\alpha} T$.
\end{itemize}
We also define
\begin{eqnarray*}
\Gamma_T &=& \biggl\{\bar h_T(\theta_T)
\leq\delta_T/2, \bar\xi_T(
\eta_T) \geq\nu_T,
\\
&&\hspace*{7pt} \#\bigl\{z\in B(0,\rho_T)\dvtx  \xi_T(z) \geq
\nu_T\bigr\}\leq K_T, \bar
\xi_T(\rho_T) < \frac{q(\rho_T-2)\gamma_T}{t_\infty
+\delta_T} \biggr\}.
\end{eqnarray*}
We think of $\Gamma_T$ as a good event on which the environment
behaves sensibly. Note from above that $\P(\Gamma_T)\to1$.

We now let
\[
Z = \bigl\{z\in L_T(0,\rho_T)\dvtx
\xi_T(z)> \bar\xi_T(\eta_T)\bigr\},
\qquad \kappa(T) = \#Z,
\]
and
\[
Z' = \bigl\{z\notin L_T(0,\rho_T)\dvtx
\xi_T(z)>\bar\xi_T(\eta_T)\bigr\}.
\]
We label the elements of $Z$ as $z_1,\ldots,z_{\kappa(T)}$ such that
$\xi_T(z_1)\leq\cdots\leq\xi_T(z_{\kappa(T)})$, and the elements
of $Z'$ arbitrarily as $z_{\kappa(T)+1},z_{\kappa(T)+2},\ldots.$ Let
$t_i = (\gamma_T h_T(z_i) - \delta_T)_+$ for each $i$. Note that
$z_i$ and $t_i$ only depend on the environment $\xi$ so that we are
allowed to apply the
results in Sections~\ref{semto} and~\ref{seG}.
(Of course $z_i$ and $t_i$ also depend on $T$, but keeping track of
this would make our notation unwieldy.)

We can now translate our general results about $G_T$ from the previous
section to get bounds for our particular choice of $z_i$ and $t_i$.

%
\begin{lem}\label{leG0}
On $\Gamma_T$, $\P$-almost surely, for any $z\notin B(0,\eta_T)$ and
any $t>0$,
\[
G_T\bigl(0,z,\bigl(\gamma_T h_T(z)-
\delta_T\bigr)_+,t\bigr) \leq\exp \bigl(a(T)T \bigl(
\tfrac{2}{3}q(\gamma_T-1)\llvert z\rrvert +
\calE^2_T\bigl(t,\llvert z\rrvert \bigr) \bigr)
\bigr).
\]
\end{lem}

\begin{pf} Note that if $\gamma_T h_T(z)-\delta_T<0$, then
\begin{eqnarray*}
G_T\bigl(0,z,\bigl(\gamma_T h_T(z)-
\delta_T\bigr)_+,t\bigr) & \leq& P^{\xi}
\bigl(A^*_T(0,z,t)\bigr)
\\
& \leq&\exp\bigl(a(T)T\bigl(-q\llvert z\rrvert +\calE^2_T
\bigl(t,\llvert z\rrvert \bigr)\bigr)\bigr),
\end{eqnarray*}
where the first inequality comes directly from the definition of $G_T$
and the second is from Lemma~\ref{rwlem2}.
In particular, on $\Gamma_T$ this bound applies if $z\in B(0,\theta
_T)\setminus B(0,\eta_T)$.
Thus it remains to consider $z\notin B(0,\theta_T)$ such that $\gamma
_T h_T(z) - \delta_T \geq0$. Since on $\Gamma_T$ we have $\bar
h_T(\eta_T)\leq\bar h_T(\theta_T) <\delta_T$, by Lemma~\ref{leVW}
(with $R=\eta_T$),
\begin{eqnarray*}
&& G_T\bigl(0,z,\bigl(\gamma_T h_T(z)-
\delta_T\bigr)_+,t\bigr)
\\
&&\qquad \leq\exp\bigl(a(T)T\bigl(q(\gamma
_T-1)\llvert z\rrvert + q\gamma_T\eta_T
+ \calE^2_T\bigl(t,\llvert z\rrvert \bigr)\bigr)\bigr).
\end{eqnarray*}
However, we chose $\eta_T = (1-\gamma_T)\theta_T/3 \leq(1-\gamma
_T)\llvert  z\rrvert  /3$, and the result follows.
\end{pf}

Now we want to apply Lemma~\ref{leG} to bound $G_T(j,\cdot,\cdot,\cdot)$ for $j\geq1$. Note that we cannot induct directly on $G_T$
since Lemma~\ref{leG} relates $G_T(j,z,\cdot,\cdot)$ to
$G_T(j-1,z_j,\cdot,\cdot)$ rather than $G_T(j-1,z,\cdot,\cdot)$.
However, we can work with
\[
\bar G_T = \max_{k\leq\kappa(T)} G_T\bigl(
\kappa(T),z_k,t_k,t_\infty\bigr).
\]
Since $\psi< 1$, we can choose $T_1$ such that
%
\begin{equation}
\label{eqbdE2} \calE^2_T(t_\infty,R) \leq q(1-
\gamma_T) R/3 \qquad \forall R\geq
\beta_T, T\geq T_1
\end{equation}
and
%
\begin{equation}
\label{eqbd1plus} 1+\exp \bigl(a(T)T\cdot q\beta _T \bigr) \leq\exp
\bigl(a(T)T\cdot2q\beta_T \bigr) \qquad\forall T
\geq T_1.
\end{equation}

%
\begin{lem}\label{leGK}
On $\Gamma_T$, for all $T\geq T_1$, $\P$-almost surely,
\[
\bar G_T \leq e^{a(T)T (q(\gamma_T-1)\eta_T/3 + 2 K_T q \beta_T)}.
\]
\end{lem}

\begin{pf}
By Lemma~\ref{leG}, $\P$-almost surely, for $j,k \leq\kappa(T)$,
\begin{eqnarray*}
&&G_T(j,z_k, t_k,t_\infty)
\\
&&\qquad  \leq G_T(j-1,z_k,t_k,t_\infty)
\\
&&\quad\qquad{} + G_T(j-1,z_j,t_j,t_\infty)e^{a(T)T\xi
_T(z_j)(t_k-t_j)_+}J_T
\bigl(t_\infty,\llvert z_k-z_j\rrvert \bigr)
\\
&&\qquad \leq G_T(j-1,z_k,t_k,t_\infty)
\\
&&\quad\qquad {}+ G_T(j-1,z_j,t_j,t_\infty)e^{a(T)T \gamma_T
q\llvert  z_k-z_j\rrvert  }J_T
\bigl(t_\infty,\llvert z_k-z_j\rrvert \bigr),
\end{eqnarray*}
where we use that if $t_k - t_j > 0$, then by Lemma~\ref{leequiv} and
since $t_j \geq\gamma_T h_T(z_j) - \delta$,
\[
(t_k - t_j)_+ \leq\gamma_T
\bigl(h_T(z_k) - h_T(z_j)
\bigr) \leq\gamma_T q \frac{\llvert  z_k-z_j\rrvert  }{\xi_T(z_j)}.
\]

Now, if $\llvert  z_k-z_j\rrvert  <\beta_T$, then [since trivially $J_T(t_\infty,\llvert  z_k-z_j\rrvert  )\leq1$]
\[
e^{a(T)T \gamma_T q\llvert  z_k-z_j\rrvert  }J_T\bigl(t_\infty,\llvert
z_k-z_j\rrvert \bigr) \leq e^{a(T)T q
\beta_T};
\]
on the other hand, if $\llvert  z_k-z_j\rrvert  \geq\beta_T$, then by Lemma~\ref{rwlem2}
\[
e^{a(T)T \gamma_T q\llvert  z_k-z_j\rrvert  }J_T\bigl(t_\infty,\llvert
z_k-z_j\rrvert \bigr) \leq e^{a(T)T
(\gamma_T q\llvert  z_k-z_j\rrvert   - q\llvert  z_k-z_j\rrvert   + \calE^2(t_\infty,\llvert  z_k-z_j\rrvert  ))}
\]
so that if $T\geq T_1$ by~(\ref{eqbdE2}),
\[
e^{a(T)T \gamma_T q\llvert  z_k-z_j\rrvert  }J_T\bigl(t_\infty,\llvert
z_k-z_j\rrvert \bigr) \leq1 \leq e^{a(T)T q \beta_T}.
\]

Either way, we can conclude that for any $j\leq\kappa(T)$ and $T\geq
T_1$ by~(\ref{eqbd1plus}),
\begin{eqnarray*}
\max_{k\leq\kappa(T)} G_T(j,z_k,t_k,t_\infty)
& \leq&\max_{k\leq
\kappa(T)} G_T(j-1,z_k,t_k,t_\infty)
\bigl(1+ e^{a(T)T q\beta_T}\bigr)
\\
& \leq&\max_{k\leq\kappa(T)} G_T(j-1,z_k,t_k,t_\infty)
e^{a(T)T 2
q\beta_T}.
\end{eqnarray*}
Iterating this inequality $\kappa(T)$ times beginning with 
$\max_{k\leq\kappa(T)} G_T(\kappa(T),z_k,\break  t_k,t_\infty)$ gives
%
\begin{equation}
\label{eqGK} \bar G_T \leq\max_{k\leq\kappa(T)}
G_T(0,z_k,t_k,t_\infty)
e^{a(T)T 2 \kappa(T) q\beta_T},
\end{equation}
but on $\Gamma_T$, $\kappa(T)\leq K_T$. Then applying Lemma~\ref
{leG0} gives the result.
\end{pf}

Very similar arguments allow us to get an estimate on $G_T$ for \emph
{any} point outside $B(0,\eta_T)$.

%
\begin{lem}\label{leGgen}
On $\Gamma_T$, $\P$-almost surely, for any $z\notin B(0,\eta_T)$ and
any $T\geq T_1$,
\begin{eqnarray*}
&& G_T\bigl(\kappa(T),  z,\bigl(\gamma_T
h_T(z)-\delta _T\bigr)_+,t_\infty\bigr)
\\
&&\qquad \leq e^{a(T)T q(\gamma_T-1)\eta_T/3} + K_T e^{a(T)T (q(\gamma
_T-1)\eta_T/3 + (2 K_T + 1) q \beta_T)}.
\end{eqnarray*}
\end{lem}

\begin{pf}
Essentially we just apply Lemma~\ref{leG} again to relate
$G_T(j,z,\cdot,\cdot)$ to $G_T(j-1,z_j,\cdot,\cdot)$, which we can
now control using Lemma~\ref{leGK}. Indeed, by Lemma~\ref{leG}, for
any $j\leq\kappa(T)$ and $s\geq0$,
\begin{eqnarray*}
&& G_T(j,z,s,t_\infty)
\\
&&\qquad \leq G_T(j-1,z,s,t_\infty) + \bar G_T
\max_{k\leq\kappa(T)} e^{a(T)T \xi_T(z_k)(s-t_k)_+}J_T
\bigl(t_\infty,\llvert z-z_k\rrvert \bigr).
\end{eqnarray*}
When $s=(\gamma_T h_T(z) - \delta_T)_+$, we get $(s-t_k)_+ \leq
\gamma_T q\frac{\llvert  z-z_k\rrvert  }{\xi_T(z_k)}$, so
\[
G(j,z,s,t_\infty) \leq G_T(j-1,z,s,t_\infty) +
\bar G_T \max_{k\leq
\kappa(T)} e^{a(T)T \gamma_T q\llvert  z-z_k\rrvert  }J_T
\bigl(t,\llvert z-z_k\rrvert \bigr).
\]
As in the proof of Lemma~\ref{leGK}, considering two cases (when
$\llvert  z-z_k\rrvert  <\beta_T$ and when $\llvert  z-z_k\rrvert  \geq\beta_T$), we get
\[
e^{a(T)T \gamma_T q\llvert  z-z_k\rrvert  }J_T\bigl(t_\infty,\llvert
z-z_k\rrvert \bigr) \leq e^{a(T)T q
\beta_T}.
\]
Thus
\begin{eqnarray*}
&& G\bigl(j,z,\bigl(\gamma_T h_T(z)-
\delta_T\bigr)_+,t_\infty\bigr)
\\
&&\qquad \leq G_T\bigl(j-1,z,\bigl(\gamma_T
h_T(z) - \delta_T\bigr)_+,t_\infty\bigr) +
\bar G_T e^{a(T)T q\beta_T}.
\end{eqnarray*}
Iterating $\kappa(T)$ times gives
\begin{eqnarray*}
&&G\bigl(\kappa(T), z,\bigl(\gamma_T h_T(z)-\delta
_T\bigr)_+,t_\infty\bigr)
\\
&&\qquad \leq G_T\bigl(0,z,\bigl(\gamma_T
h_T(z)-\delta_T\bigr)_+,t_\infty\bigr) +
\kappa (T)\bar G_T e^{a(T)T q\beta_T};
\end{eqnarray*}
then applying Lemmas~\ref{leG0} and~\ref{leGK} [together with the
fact that on $\Gamma_T$, $\kappa(T)\leq K_T$] completes the proof.
\end{pf}

\subsection{Particles do not arrive too early}\label{sesubearly}
We are finally in a position to prove our first real result, that
$H_T(z)$ does not occur significantly before $h_T(z)$ for any $z$.

%
\begin{prop}\label{prearly}
For any $t_\infty>0$, there exists $T_2$ such that for all $T\geq T_2$,
\[
\P \bigl(\exists z\dvtx  H_T(z)\leq\bigl(\gamma_T
h_T(z)-\delta_T\bigr)\wedge t_\infty \bigr)
\leq\P\bigl(\Gamma_T^c\bigr) + e^{-T}\to0.
\]
\end{prop}

\begin{pf}
All that remains is to tie together the threads developed above. Note
that by Lemma~\ref{le0805-1},
%
\begin{eqnarray}
\label{eqUT4bd}
&& \bigl\{\exists z\in L_T\setminus B(0,
\rho_T-2) \dvtx  \gamma_T h_T(z)-\delta
_T \leq t_\infty\bigr\}
\nonumber\\[-8pt]\\[-8pt]\nonumber
&&\qquad \subseteq \biggl\{ \bar\xi_T(\rho_T-2)\geq
\frac{q(\rho
_T-2)\gamma_T}{t_\infty+\delta_T} \biggr\} \subseteq\Gamma_T^c.
\end{eqnarray}
Since our random walks only make nearest neighbor steps, particles must
enter $L_T(0,\rho_T)\setminus L_T(0,\rho_T-2)$ before they can exit
$B(0,\rho_T)$. Thus if there exists $z$ outside $B(0,\rho_T)$ such
that $H_T(z)\leq t_\infty$, then there must exist $z$ within
$L_T(0,\rho_T)\setminus L_T(0,\rho_T-2)$ such that $H_T(z)\leq
t_\infty$. Thus on $\Gamma_T$,
\begin{eqnarray*}
&& \bigl\{\exists z\dvtx  H_T(z)\leq\bigl(\gamma_T
h_T(z)-\delta_T\bigr)\wedge t_\infty\bigr\}
\\
&&\qquad \subseteq\bigl\{\exists z\in L_T(0,\rho_T)\dvtx
H_T(z)\leq\bigl(\gamma_T h_T(z)-
\delta_T\bigr)\wedge t_\infty\bigr\}.
\end{eqnarray*}
If a point is hit early, then there must be a \textit{first} point that is
hit early; thus, recalling that
\begin{eqnarray*}
&& A_T\bigl(\kappa(T), z, \bigl(\gamma_T
h_T(z)-\delta_T\bigr)_+\wedge t_\infty\bigr)
\\
&&\qquad = \bigl\{H_T(z)\leq\bigl(\gamma_T
h_T(z)-\delta_T\bigr)_+\wedge t_\infty,
H_T(z_i)\geq t_i\wedge
H_T(z)~\forall i\leq\kappa (T),
\\
&&\hspace*{208pt} H_T(z_i)\geq H_T(z)~\forall i>
\kappa(T)\bigr\},
\end{eqnarray*}
we have reduced the problem to showing that on $\Gamma_T$,
\[
P^{\xi} \biggl(\bigcup_{z\in L_T(0,\rho_T)}
A_T\bigl(\kappa(T), z, \bigl(\gamma_T
h_T(z)-\delta_T\bigr)_+\wedge t_\infty\bigr)
\biggr) \leq e^{-T}.
\]
However, by Corollary~\ref{coPG}, for any $z\in L_T$,
\[
P^{\xi}\bigl(A_T\bigl(\kappa(T), z, \bigl(
\gamma_T h_T(z)-\delta_T\bigr)_+\wedge
t_\infty\bigr)\bigr) \leq G_T\bigl(\kappa(T),z,\bigl(
\gamma_T h_T(z)-\delta _T
\bigr)_+,t_\infty\bigr),
\]
and by Lemma~\ref{leGgen}, for any $z\notin B(0,\eta_T)$, on $\Gamma
_T$, $\P$-almost surely
\begin{eqnarray*}
&& G_T\bigl(\kappa(T), z,\bigl(\gamma_T
h_T(z)-\delta _T\bigr)_+,t_\infty\bigr)
\\
&&\qquad \leq e^{a(T)T q(\gamma_T-1)\eta_T/3} + K_T e^{a(T)T (q(\gamma
_T-1)\eta_T/3 + (2 K_T + 1) q \beta_T)}.
\end{eqnarray*}
Now,
\[
\frac{q(\gamma_T-1)\eta_T}{3} = -\frac{q}{81 t_\infty^2 \log
^{2\psi+ 2(\psi+1)(q+1)}T}
\]
and
\[
(2K_T+1)q\beta_T \leq\frac{3q}{\log^{2\psi d + 2(\psi
+1)(q+1)\alpha} T},
\]
so by taking $T$ large (not depending on the environment $\xi$), we
can certainly ensure that
\[
e^{a(T)T (q(\gamma_T-1)\eta_T/3 + (2 K_T + 1) q \beta_T)}\leq e^{-2T}.
\]
Thus for large $T$, for any $z\notin B(0,\eta_T)$, on $\Gamma_T$, $\P
$-almost surely
\[
G_T\bigl(\kappa(T),z,\gamma_T h_T(z)-
\delta_T,t_\infty\bigr) \leq(K_T+1)e^{-2T}.
\]
Also, on $\Gamma_T$, if $z\in B(0,\eta_T)$, then $h_T(z)<\delta_T$
and hence $P^{\xi}(A_T(\kappa(T), z$, $(\gamma_T h_T(z)-\delta
_T)_+\wedge t_\infty))=0$.
We deduce that for large $T$,
\begin{eqnarray*}
&& P^{\xi} \biggl(\bigcup_{z\in L_T(0,\rho_T)}
A_T\bigl(\kappa(T), z, \bigl(\gamma_T
h_T(z)-\delta_T\bigr)_+\wedge t_\infty\bigr)
\biggr)
\\
&&\qquad \leq C_d \rho_T^d r(T)^d
(K_T+1) e^{-2T} 
e^{-T}, 
\end{eqnarray*}
which converges to $0$ as $T \rightarrow\infty$.
\end{pf}

\subsection{There are not too many particles}\label{setoomany}
Now that we have bounded the probability that any of our points is hit
early, we check that the number of particles at any site cannot be too
large (given that no point is hit early). We work with the same
parameters as above, and the same choice of $z_i, t_i, i\geq1$. Indeed
our whole tactic will be very similar, except that instead of looking
at the expected number of particles at $z$ at time $s\wedge H_T(z)$, we
will instead just look at time $s$ [conditional on not having hit $z$
substantially before $h_T(z)$].

Define the events
\[
\calH_T = \bigl\{H_T(y) > \bigl(
\gamma_T h_T(y) - \delta_T\bigr)\wedge
t_\infty ~\forall y\bigr\}
\]
and for $s > 0$
\[
\calH^*_T(s) = \bigl\{H^*_T(y) > \bigl(
\gamma_T h_T(y) - \delta_T\bigr)\wedge s
~\forall y\bigr\}.
\]
We know from Proposition~\ref{prearly} that $\P(\calH_T)\to1$ as
$T\to\infty$.

We begin with a lemma that allows us to control the number of particles
at $z$ by linking to something we already know a lot about: $G_T$.

%
\begin{lem}\label{leexpno1}
On $\Gamma_T$, $\P$-almost surely, for any $s\leq t_\infty$ and any $z$,
\begin{eqnarray*}
&& E^{\xi} \Bigl[\sup_{u\in(s-\delta_T,s]}\#\bigl\{v\in Y(uT)\dvtx
X_v(uT) = r(T)z\bigr\}\1_{\calH_T} \Bigr]
\\
&&\qquad \leq e^{a(T)T \bar\xi_T(\eta_T) s} J_T\bigl(s,\llvert z\rrvert \bigr)
\\
&&\quad\qquad{} + \sum_{j=1}^{\kappa(T)}
G_T(j,z_j,t_j,t_\infty)
e^{a(T)T\xi_T(z_j)(s- t_j)+a(T)T\xi
_T(z_{\kappa(T)})\delta_T} J_T\bigl(s,\llvert z-z_j\rrvert
\bigr)
\\
&&\hspace*{61pt}{}\times \1_{\{t_j \leq s \}}.
\end{eqnarray*}
\end{lem}

\begin{pf}
The plan is as follows: we apply the many-to-one lemma to turn our
expectation over the branching random walk into an expectation
involving only one random walk. Then either we do not hit any $z_j$
before time $s$, in which case our potential is small, or there is a
last $z_j$ that we hit. For each $j$ we then use similar calculations
to those in the proof of Lemma~\ref{leG}.

For $s>0$, let
\[
\tilde A^*_T(0,z,s) = \bigl\{\exists u\in(s-\delta_T,s]
\dvtx  X(uT) = r(T)z, H_T^*(z_i)>s-
\delta_T ~\forall i\bigr\}\cap\calH ^*_T(s),
\]
and for $j\geq1$, let
\begin{eqnarray*}
\tilde A^*_T (j,z,s) &=& \lleft\{ \matrix{\exists u\in(s-
\delta_T,s]\dvtx  X(uT) = r(T)z,
\vspace*{3pt}\cr
H_T^*(z_j)<s-
\delta_T, H_T^*(z_i)>s-
\delta_T~\forall i>j} \rright\}\cap\calH^*_T(s).
\end{eqnarray*}
Informally, $\tilde A^*_T(j,z,s)$ says that we are at $z$ around time
$s$, we traveled via $z_j$ (unless $j=0$) and not via $z_i$ for $i>j$,
and no-one was hit early.

First note that
\begin{eqnarray*}
&& E^{\xi} \Bigl[\sup_{u\in(s-\delta_T,s]}\#\bigl\{v\in Y(uT)\dvtx
X_v(uT) = r(T)z\bigr\}\1_{\calH_T} \Bigr]
\\
&&\qquad \leq E^{\xi} \biggl[\sum_{v\in Y(sT)}
\1_{\{\exists u\in(s-\delta
_T,s]\dvtx  X_v(uT)=r(T)z, H^v_T(y)>(\gamma_T h_T(y)-\delta
_T)\wedge s~\forall y\}} \biggr].
\end{eqnarray*}
By the many-to-one lemma, this equals
\[
E^{\xi} \biggl[\exp \biggl(T\int_0^s
\xi\bigl(X(uT)\bigr) \,du \biggr) \1_{\{
\exists u\in(s-\delta_T,s]\dvtx  X(uT)=r(T)z\}\cap\calH^*_T(s)} \biggr].
\]
However, either we do not hit any $z_j$, or there is a last $j$ such
that we hit $z_j$ before time $s-\delta_T$, so the above is at most
\[
\sum_{j=0}^\infty E^{\xi} \biggl[
\exp \biggl(T\int_0^s \xi\bigl(X(uT)\bigr) \,du
\biggr) \1_{\tilde A^*_T(j,z,s)} \biggr].
\]
If $t_j = \gamma_T h_T(z_j)-\delta_T >s$, then $\tilde
A^*_T(j,z,s)=\varnothing$. As in (\ref{eqUT4bd}), on $\Gamma_T$ we
have $\gamma_T h_T(z_j)-\delta_T>t_\infty$ for all $j>\kappa(T)$,
so $\tilde A^*_T(j,z,s)=\varnothing$ for all $j>\kappa(T)$. Thus we may
restrict the sum above to $j\leq\kappa(T)$ such that $h_T(z_j)\leq
(s+\delta_T)/\gamma_T$. We then know that on $\tilde A^*_T(j,z,s)$,
between times $t_j T$ and $(s-\delta_T)T$ our potential is at most
$a(T)\xi_T(z_j)$, and between times $(s-\delta_T)T$ and $sT$ our
potential is at most $a(T)\xi_T(z_{\kappa(T)})$. This tells us that
\begin{eqnarray*}
&& E^{\xi} \biggl[\exp \biggl(T\int_0^s
\xi\bigl(X(uT)\bigr) \,du \biggr) \1_{\tilde
A^*_T(j,z,s)} \biggr]
\\
&&\qquad \leq E^{\xi} \biggl[\exp \biggl(T\int_0^{t_j}
\xi\bigl(X(uT)\bigr) \,du + a(T)T\xi_T(z_j)
(s-t_j)
\\
&&\hspace*{171pt}{} + a(T)T\xi_T(z_{\kappa(T)})\delta_T \biggr)
\1_{\tilde
A^*_T(j,z,s)} \biggr].
\end{eqnarray*}
Recall that we defined
\[
A^*_T(j,z,t) = \left\{ \matrix{ H^*_T(z) \leq t,
H^*_T(z_i)\geq H^*_T(z)\wedge
t_i~\forall i\leq j,
\vspace*{3pt}\cr
H^*_T(z_i)\geq
H^*_T(z)~\forall i>j} \right\}.
\]
Note that for $j\geq1$,
\begin{eqnarray*}
\tilde A^*_T(j,z,s)
&\subseteq& A^*_T(j,z_j,s)
\\
&&{} \cap\bigl
\{H_T^*(z_j) \in(t_j, s-
\delta_T], \exists u\in(s-\delta_T,s]\dvtx
X(uT)=r(T)z\bigr\}.
\end{eqnarray*}
Further, recalling that $\mathcal G_u, u\geq0$ is the natural
filtration of our random walk $X$, we have
\begin{eqnarray*}
&& P^{\xi}\bigl(H_T^* (z_j)\leq s-
\delta_T,\exists u\in l(s-\delta_T,s]\dvtx  X(uT)=r(T)z \mid \mathcal G_{(H_T^*(z_j)\wedge(s-\delta))T}\bigr)
\\
&&\qquad  \leq J_T\bigl(s,\llvert z-z_j\rrvert \bigr).
\end{eqnarray*}
Thus since $e^{T\int_0^{t_j} \xi(X(uT)) \,du}\1_{A^*_T(j,z_j,s)}\1_{\{
H_T^*(z_j) > t_j\}}$ is $\mathcal G_{H_T^*(z_j)}$-measurable,
\begin{eqnarray*}
&&E^{\xi} \biggl[\exp \biggl(T\int_0^{t_j}
\xi\bigl(X(uT)\bigr) \,du + a(T)T\xi _T(z_j)
(s-t_j) \biggr)
\\
&&\hspace*{76pt}{}\times\exp \bigl(a(T)T\xi_T\bigl(z_\kappa(T)
\bigr)\delta_T \bigr)\1_{\tilde A^*_T(j,z,s)} \biggr]
\\
&&\qquad  \leq E^{\xi} \biggl[\exp \biggl(T\int_0^{t_j}
\xi\bigl(X(uT)\bigr) \,du \biggr)\1 _{A^*_T(j,z_j,s)}\1_{\{H_T^*(z_j) > t_j\}} \biggr]
\\
&&\quad\qquad{}\times\exp \bigl(a(T)T\xi_T(z_j)
(s-t_j) + a(T)T\xi _T(z_{\kappa(T)})
\delta_T \bigr) J_T\bigl(s,\llvert z-z_j
\rrvert \bigr).
\end{eqnarray*}
However,
\begin{eqnarray*}
&& E^{\xi} \biggl[\exp \biggl(T\int_0^{t_j \wedge(s-
\delta_T)}
\xi\bigl(X(uT)\bigr) \,du \biggr)\1_{A^*_T(j,z_j,s)} \1_{\{H_T^*(z_j) > t_j\}} \biggr]
\\
&&\qquad \leq G_T(j,z_j,t_j,t_\infty),
\end{eqnarray*}
so putting all of this together,
\begin{eqnarray*}
&& E^{\xi}\bigl[\#\bigl\{v\in Y(sT)\dvtx  X_v(sT) = r(T)z\bigr\}
\1_{\calH_T}\bigr]
\\
&&\qquad \leq E^{\xi} \biggl[\exp \biggl(T\int_0^s
\xi\bigl(X(uT)\bigr) \,du \biggr) \1 _{\tilde A^*_T(0,z,s)} \biggr]
\\
&&\quad\qquad{} + \sum_{j=1}^{\kappa(T)}
G_T(j,z_j,t_j,t_\infty)
e^{a(T)T\xi_T(z_j)(s-t_j) + a(T)T\xi
_T(z_{\kappa(T)})\delta_T} J_T\bigl(s,\llvert
z-z_j\rrvert \bigr)
\\
&&\hspace*{63pt}{}\times \1_{\{t_j \leq s\}}.
\end{eqnarray*}
Finally, since $\xi_T(y)\leq\bar\xi_T(\eta_T)$ for all $y\notin\{
z_1,z_2,\ldots\}$, we have
\[
E^{\xi} \biggl[\exp \biggl(T\int_0^s
\xi\bigl(X(uT)\bigr) \,du \biggr) \1_{\tilde
A^*_T(0,z,s)} \biggr] \leq
e^{a(T)T \bar\xi_T(\eta_T) s}J_T\bigl(s,\llvert z\rrvert \bigr),
\]
and the result follows.
\end{pf}

We now use our knowledge of $G_T$ to get a bound in terms of $m_T(z,s)$.

%
\begin{lem}\label{leexpno2}
There exists $T_3$ such that for any $T\geq T_3$, on $\Gamma_T$, $\P
$-almost surely, for any $s\leq t_\infty$ and any $z$,
\begin{eqnarray*}
E^{\xi} \Bigl[\sup_{u\in(s-\delta_T,s]}\#\bigl\{v\in Y(uT)\dvtx
X_v(uT) = r(T)z\bigr\}\1_{\calH_T} \Bigr]
\\
\leq\exp \biggl( a(T)T\biggl(m_T(z,s) + \frac{1}2
\log^{-1/2} T\biggr) \biggr).
\end{eqnarray*}
\end{lem}

\begin{pf}
Note that, as in (\ref{eqUT4bd}), we may assume that $z\in B(0,\rho
_T)$; otherwise the expectation is 0. Clearly our starting point is
Lemma~\ref{leexpno1}. First we show that
\[
e^{a(T)T \bar\xi_T(\eta_T) s} J_T\bigl(s,\llvert z\rrvert \bigr) \leq\exp
\bigl( a(T)T\bigl(m_T(z,s) + \tfrac{1}4
\log^{-1/2} T\bigr) \bigr).
\]
To do this, choose $y\in B(0,\eta_T)$ such that $\xi_T(y) = \bar\xi
_T(\eta_T)$. Then applying Lemma~\ref{rwlem2},
\begin{eqnarray*}
&&e^{a(T)T \bar\xi_T(\eta_T) s} J_T\bigl(s,\llvert z\rrvert \bigr)
\\
&&\qquad \leq\exp \bigl( a(T)T \bigl(\xi_T(y) s - q\llvert z\rrvert +
\calE^2_T\bigl(s,\llvert z\rrvert \bigr)\bigr) \bigr)
\\
&&\qquad \leq\exp \bigl( a(T)T \bigl(\xi_T(y) \bigl(s-h_T(y)
\bigr) - q\llvert z-y\rrvert
\\
&&\hspace*{52pt}\quad\qquad{} + \xi_T(y)h_T(y) + q\llvert y\rrvert
+ \calE^2_T\bigl(s,\llvert z\rrvert \bigr)\bigr) \bigr)
\\
&&\qquad \leq\exp \bigl( a(T)T \bigl(m_T(z,s) + \xi_T(y)h_T(y)
+ q\llvert y\rrvert + \calE ^2_T\bigl(s,\llvert z
\rrvert \bigr)\bigr) \bigr).
\end{eqnarray*}
However, on $\Gamma_T$, we have $h_T(y)\leq\delta_T/2 = 1/(6\log
^\psi T)$, and $\psi>\frac{1}2$, so on $\Gamma_T$,
\[
\xi_T(y) h_T(y) \leq\max_{y' \in L_T(0,\rho_T)}
\xi_T\bigl(y'\bigr) \delta _T /2 \leq
\frac{q}{2 t_\infty} \rho_T \delta_T \leq
\frac{1}8 \log^{-1/2} T,
\]
for $T$ large.
Also $\llvert  y\rrvert  \leq\eta_T \leq\log^{-\psi} T$, and since $\llvert  z\rrvert  <\rho_T$,
for large $T$ we have $\calE^2_T(s,\llvert  z\rrvert  )\leq(q+2)(\log\log T)^2/\log
T$. Thus
\[
e^{a(T)T \bar\xi_T(\eta_T) s} J_T\bigl(s,\llvert z\rrvert \bigr) \leq\exp
\bigl( a(T)T\bigl(m_T(z,s) + \tfrac{1}4
\log^{-1/2} T\bigr) \bigr),
\]
as claimed.

We now move on to bounding
\[
\sum_{j=1}^{\kappa(T)} G_T(j,z_j,t_j,t_\infty)
e^{a(T)T\xi
_T(z_j)(s-t_j)+a(T)T\xi_T(z_{\kappa(T)})\delta_T} J_T\bigl(s,\llvert z-z_j\rrvert
\bigr)\1 _{\{t_j \leq s\}}.
\]
By Lemma~\ref{rwlem2}, $J_T(s,\llvert  z-z_j\rrvert  )\leq\exp (-a(T)T(
q\llvert  z-z_j\rrvert   - \calE^2_T(s,\llvert  z-z_j\rrvert  ) )$, so the above is at most
\begin{eqnarray*}
&& \sum_{j=1}^{\kappa(T)} G_T(j,z_j,t_j,t_\infty)
\1_{\{h_T(z_j)\leq
(s+\delta_T)/\gamma_T\}}
\\
&&\hspace*{15pt}{}\times  e^{a(T)T(\xi_T(z_j)(s-h_T(z_j) - q\llvert  z-z_j\rrvert  ) + \xi
_T(z_j)(1-\gamma_T) h_T(z_j) +2 \xi_T(z_{\kappa(T)})\delta_T +
\calE^2_T(s,\llvert  z-z_j\rrvert  ))}.
\end{eqnarray*}
Since we are assuming $z\in B(0,\rho_T)$, and $j\leq\kappa(T)$ so
$z_j\in B(0,\rho_T)$, we have $\llvert  z-z_j\rrvert  \leq2\rho_T$ so as above for
large $T$ we have $\calE^2_T(s,\llvert  z-z_j\rrvert  )\leq(q+2)(\log\log T)^2/\log
T$. Also, if $h_T(z_j)\leq s\leq t_\infty$, then on $\Gamma_T$ we
have $\xi_T(z_j)(1-\gamma_T) h_T(z_j) +2\xi_T(z_{\kappa(T)})\delta
_T \leq(\log\log T)/\log^\psi T$. Thus, as $\psi> \frac{1}2$, the
above is at most
\[
\sum_{j=1}^{\kappa(T)} G_T(j,z_j,t_j,t_\infty)
e^{a(T)T(\xi
_T(z_j)(s-h_T(z_j) - q\llvert  z-z_j\rrvert  ) + \sklfrac{1}8 \log^{-1/2} T)}.
\]
However, $\xi_T(z_j)(s-h_T(z_j)) - q\llvert  z-z_j\rrvert   \leq m_T(z,s)$, and on
$\Gamma_T$, $\kappa(T)\leq K_T$, and Lemma~\ref{leGK} tells us that
for each $j\leq\kappa(T)$,
\[
G_T(j,z_j,t_j,t_\infty)\leq
\exp \bigl(a(T)T\bigl(q(\gamma_T-1)\eta_T/3 +
2K_Tq\beta_T\bigr) \bigr).
\]
As in the proof of Proposition~\ref{prearly}, it is easy to check
that this is at most $e^{-2T}\leq1$. Putting all of this together, we get
\begin{eqnarray*}
&&\sum_{j=1}^{\kappa(T)} G_T(j,z_j,t_j,t_\infty)
e^{a(T)T\xi
_T(z_j)(s-t_j+\delta_T)} J_T\bigl(s,\llvert z-z_j\rrvert
\bigr)\1_{\{t_j \leq s\}}
\\
&&\qquad \leq K_T \exp \biggl(a(T)T \biggl( m_T(z,s) +
\frac{1}8 \log^{-1/2} T \biggr) \biggr)
\\
&&\qquad \leq\exp \biggl(a(T)T \biggl( m_T(z,s) + \frac{1}4
\log^{-1/2} T \biggr) \biggr).
\end{eqnarray*}

Finally, by Lemma~\ref{leexpno1} and the above two calculations,
%
\begin{eqnarray*}
&& E^{\xi} \Bigl[\sup_{u\in(s-\delta_T,s]}\#\bigl\{v\in Y(uT)\dvtx
X_v(uT) = r(T)z\bigr\}\1_{\mathcal H_T} \Bigr]
\\
&&\qquad \leq \exp \biggl( a(T)T\biggl(m_T(z,s) + \frac{1}4
\log^{-1/2} T\biggr) \biggr)
\\
&&\quad\qquad{}  + \exp \biggl(a(T)T \biggl( m_T(z,s) + \frac{1}4
\log^{-1/2} T \biggr) \biggr)
\\
&&\qquad \leq\exp \biggl( a(T)T\biggl(m_T(z,s) +\frac{1}2
\log^{-1/2} T \biggr) \biggr).
\end{eqnarray*}
\upqed
\end{pf}

We have now done the hard work, so we can show that the number of
particles at each point $z$ behaves more or less as it should.

%
\begin{prop}\label{prtoomany}
There exists $T_4$ such that for all $T\geq T_4$,
\begin{eqnarray*}
&& \P\bigl(\exists u\in(0,t_\infty],\exists z\in
L_T\dvtx  M_T(z,u)> m_T(z,u)+
\log^{-1/2} T\bigr)
\\
&&\qquad \leq 2\P\bigl(\Gamma_T^c\bigr) + 2e^{-T}
\to0.
\end{eqnarray*}
\end{prop}

\begin{pf}
Since, for any $z$, $m_T(z,u)$ is increasing in $u$, Markov's
inequality and Lemma~\ref{leexpno2} tell us that if $s\leq t_\infty$
and $T\geq T_3$, then on $\Gamma_T$
\begin{eqnarray*}
&&P^{\xi}\bigl(\exists u\in(s-\delta_T,s]\dvtx  N
\bigl(r(T)z,uT\bigr)>e^{a(T)T(m_T(z,u)+\log^{-1/2} T)}, \calH_T\bigr)
\\
&&\qquad \leq E^{\xi} \Bigl[\sup_{u\in(s-\delta_T,s]} N\bigl(r(T)z,uT\bigr)
\1 _{\mathcal H_T} \Bigr] e^{-a(T)T(m_T(z,s-\delta_T)+\log^{-1/2}T)}
\\
&&\qquad \leq e^{a(T)T(m_T(z,s) + \sklfrac{1}2 \log^{-1/2} T - m_T(z,s-\delta
_T)-\log^{-1/2} T)}.
\end{eqnarray*}
By the definition of $m_T$, on $\Gamma_T$ we have since $\psi> \frac{1}2$
\begin{eqnarray*}
m_T(z,s) & \leq& m_T(z,s-\delta_T) +
\sup_{y\dvtx h_T(y)\leq s} \xi_T(y)\delta_T
\\
& \leq& m_T(z,s-\delta_T) + (\log\log T)/\bigl(3
\log^\psi T\bigr)
\\
& \leq& m_T(z,s-\delta_T) + \frac{1}4
\log^{-1/2}T,
\end{eqnarray*}
so
\begin{eqnarray*}
&& P^{\xi}\bigl(\exists u\in(s-\delta_T,s]\dvtx  N\bigl(r(T)z,uT
\bigr)>e^{a(T)T(m_T(z,u)+\log^{-1/2} T)}, \calH_T\bigr)
\\
&&\qquad \leq e^{-\sklfrac{1}4a(T)T\log^{-1/2} T}.
\end{eqnarray*}
Written in terms of $M_T$, this is (on $\Gamma_T$)
\begin{eqnarray*}
&& P^{\xi}\biggl(\exists u\in(s-\delta_T,s]\dvtx
M_T(z,u)> m_T(z,u)+\frac
{1}{\log^{1/2} T},
\calH_T\biggr)
\\
&&\qquad \leq e^{-\sklfrac{1}4 a(T)T\log^{-1/2} T}.
\end{eqnarray*}
Taking a union over $s=\delta_T, 2\delta_T,\ldots, \lceil t_\infty
/\delta_T\rceil\delta_T$ and $z\in L_T(\rho_T)$, on $\Gamma_T$ we have
\begin{eqnarray*}
&& P^{\xi}\bigl(\exists u\in(0,t], \exists z\in L_T(
\rho_T)\dvtx  M_T(z,u)> m_T(z,u)+
\log^{-1/2} T, \calH_T\bigr)
\\
&&\qquad \leq\frac{t_\infty+1}{\delta_T} C_d r(T)^d\rho_T^d
e^{-\sklfrac{1}4a(T)T \log^{-1/2} T} \leq e^{-T}.
\end{eqnarray*}
On $\Gamma_T$, we have $\gamma_T h_T(z)-\delta_T \geq t_\infty$ for
all $z\in L_T\setminus L_T(\rho_T)$ [see (\ref{eqUT4bd})], and
therefore on $\calH_T$ there cannot exist $z\in L_T\setminus L_T(\rho
_T)$ such that $M_T(z,u)> 0$ for any $u\leq t_\infty$. This allows us
to change $z\in L_T(\rho_T)$ to $z\in L_T$ in the estimate above.
Thus, applying Proposition~\ref{prearly}, for large $T$,
\begin{eqnarray*}
&& \P\bigl(\exists u\in(0,t_\infty], \exists z\in L_T\dvtx
M_T(z,u)> m_T(z,u)+\log^{-1/2} T\bigr)
\\
&&\qquad \leq\P\bigl(\Gamma_T^c\bigr) + \P\bigl(
\calH_T^c\bigr) + e^{-T} \leq2\P\bigl(
\Gamma_T^c\bigr) + 2e^{-T} \to0.
\end{eqnarray*}
\upqed
\end{pf}

\section{Lower bounds}\label{selate}
We now turn our attention to lower bounds on hitting times and the
number of particles. The key is to check that if $y$ has reasonably
large potential, and we start with lots of particles at $y$, then we
can travel to $z$ in time roughly $q\llvert  z-y\rrvert  /\xi_T(y)$. We do this in
Lemma~\ref{le2ptgrowth}. Since we are not likely to start from a site
with large potential, we must also check that things behave well near
the origin, which is carried out in Lemmas~\ref{lenear0} and~\ref
{levia}. These results are then applied in Section~\ref{sesublate}
to check that $H_T(z)$ is not too much larger than $h_T(z)$, and in
Section~\ref{setoofew} to ensure that there are never too few
particles at a site.

Let $\mu_T = \log^{1/4} T$. Then for $z\in L_T$, define
\[
H'_T(z) = \inf\bigl\{t>0\dvtx  N\bigl( r(T)z, tT\bigr) >
\exp(\mu_T)\bigr\}.
\]
To avoid the randomness that occurs when we only have a few particles,
we work with $H'_T(z)$ instead of $H_T(z)$.

\subsection{Preliminary estimates for the lower bounds}

We start by checking that if we start with lots of particles at a site
$y$, the
number of particles grows as expected.

%
\begin{lem}\label{le1ptgrowth}
Take\vspace*{2pt} $y\in L_T$ such that $\xi_T(y)\geq\frac{\mu_T^3}{a(T)}$ and
choose $p\in[1/2,1]$. Then for any $s\geq0$, for large $T$ (depending
only on $d$), for any $s \geq0$,
\begin{eqnarray*}
&& P^{\xi} \biggl(N \biggl(r(T)y, H'_T(y)T+
\biggl(1+ \frac
{p}{\mu_T^2}\biggr)sT \biggr) \leq\frac{1}{64}e^{a(T)T \xi_T(y)s
(1+p/(2\mu_T^2)) + \mu_T}
\biggr)
\\
&&\qquad < \frac{1}2e^{-\log^2 T}.
\end{eqnarray*}
\end{lem}

\begin{pf}
By definition there are $e^{\mu_T}$ particles at $r(T)y$ at time
$H'_T(y)T$. By Lemma~\ref{2m}, we expect at least $\frac{1}{16}\exp
(\mu_T)$ of these to have at least
\[
\frac{1}2 \exp \biggl(\bigl(\xi_T(y)a(T)-2d\bigr)
\biggl(1+ \frac{p}{\mu
_T^2}\biggr)sT \biggr)
\]
descendants at $r(T)y$ at time $H_T'(y)T + (1+\frac{p}{\mu_T^2})sT$.
Note that when\vspace*{-5pt} $T$ is large, since $\xi_T(y)\geq\frac{\mu
_T^3}{a(T)}$, we have
\begin{eqnarray*}
&& \bigl(\xi_T(y)a(T)-2d\bigr) \biggl(1+ \frac
{p}{\mu_T^2}\biggr)
\\
&&\qquad  \geq\xi_T(y)a(T)
+ \xi_T(y)a(T) \frac
{p}{2\mu_T^2} +  p
\mu_T - 2d\biggl(1+ \frac{p}{\mu
_T^2}\biggr)
\\
&&\qquad  \geq \xi_T(y)a(T) \biggl(1+ \frac{p}{2\mu_T^2}\biggr).
\end{eqnarray*}
The result now follows from Lemma~\ref{lechernoff} since $\exp
(-\frac{1}{8}\cdot\frac{1}{16}\exp(\mu_T) ) \leq\break \frac{1}2 \exp
(-\log^2 T)$ when $T$ is large.
\end{pf}

Now that we know that the number of particles at $y$ grows as expected,
we can check that particles move from $y$ to $z$ in time roughly
$q\llvert  z-y\rrvert  /\xi_T(y)$.

%
\begin{lem}\label{le2ptgrowth}
Take $y, z\in L_T$ such that $y\neq z$ and $\frac{\mu_T^3}{a(T)} \leq
\xi_T(y)\leq\exp(\mu_T)$. Suppose that $s\geq q\llvert  z-y\rrvert  /\xi_T(y)$.
For large $T$ (depending only on $d$ and $q$),
\[
P^{\xi}\biggl(N \biggl(r(T)z, H'_T(y)T+
\biggl(1+ \frac
{1}{\mu_T^2}\biggr)sT \biggr) \leq e^{a(T)T (\xi_T(y)s-q\llvert  z-y\rrvert   )
+\mu_T}
\biggr) < e^{-\log^2 T}.
\]
\end{lem}

Note in particular that if we apply this result at time $s=q\llvert  z-y\rrvert  /\xi
_T(y)$, then we already see $\exp(\mu_T)$ particles.

\begin{pf*}{Proof of Lemma \ref{le2ptgrowth}}
By Lemma~\ref{le1ptgrowth}, for large $T$,
\begin{eqnarray*}
&&P^{\xi} \biggl(N \biggl(r(T)y, H'_T(y)T+
\biggl(1+ \frac
{1}{2\mu_T^2}\biggr)sT \biggr) \leq\frac{1}{64}e^{a(T)T \xi_T(y)s
(1+1/(4\mu_T^2)) + \mu_T}
\biggr)
\\
&&\qquad < \frac{1}2e^{-\log^2 T}.
\end{eqnarray*}
By Lemma~\ref{rwlem},
\begin{eqnarray*}
&& P \biggl(X \biggl(\frac{s T}{2\mu_T^2} \biggr) = r(T) (z-y) \biggr)
\\
&&\qquad \geq \exp
\biggl(-a(T)T\biggl(q\llvert z-y\rrvert + \calE^1_T
\biggl(\frac{s}{2\mu
_T^2},\llvert z-y\rrvert \biggr)\biggr) \biggr).
\end{eqnarray*}
In words, we have a large number of particles at $r(T)y$ just before
the time we are interested in, and each has a reasonable probability of
being at $r(T)z$ at the time we are interested in. Applying Lemma~\ref
{lechernoff}, we are done, provided that
\begin{eqnarray*}
&& \tfrac{1}{128}e^{a(T)T \xi_T(y)s (1+1/(4\mu_T^2)) +
\mu_T - a(T)T(q\llvert  z-y\rrvert   + \calE^1_T(s/(2\mu_T^2),\llvert  z-y\rrvert  ))}
\\
&&\qquad \geq e^{a(T)T (\xi_T(y)s-q\llvert  z-y\rrvert   ) +\mu_T},
\end{eqnarray*}
which reduces to showing that
\[
\tfrac{1}{128}e^{a(T)T (\xi_T(y)s/(4\mu_T^2) - \calE^1_T(s/(2\mu
_T^2),\llvert  z-y\rrvert  ))} \geq1.
\]
However, since $\llvert  z-y\rrvert   \leq\xi_T(y)s/q \leq s\exp(\mu_T)/q$ and $\xi
_T(y)\geq\frac{\mu_T^3}{a(T)}$, for large $T$,
\begin{eqnarray*}
\calE^1_T \biggl(\frac{s}{2\mu_T^2},\llvert z-y\rrvert
\biggr) & =& \frac
{\llvert  z-y\rrvert  }{\log T} \biggl(\log\llvert z-y\rrvert - \log \biggl(
\frac{s}{2\mu
_T^2} \biggr) \biggr) + \frac{ds}{\mu_T^2 a(T)}
\\
& \leq&\frac{\xi_T(y)s}{q\log T}(\mu_T - \log q + \log2 + 2\log\mu
_T) + \frac{\xi_T(y)s}{q\log T}\cdot\frac{dq}{\mu_T}
\\
& \leq&\frac{2\xi_T(y)s}{q\mu_T^3}.
\end{eqnarray*}
Thus for large $T$,
\[
\tfrac{1}{128}e^{a(T)T (\xi_T(y)s/(4\mu_T^2) - \calE^1_T(s/(2\mu
_T^2),\llvert  z-y\rrvert  ))} \geq\tfrac{1}{128}e^{a(T)T \xi_T(y)s/(8\mu_T^2)},
\]
but since $y\neq z$, we have $\llvert  y-z\rrvert   \geq1/r(T)$ so that
\[
a(T)T \frac{\xi_T(y)s}{8\mu_T^2} \geq a(T)T \frac{q\llvert  z-y\rrvert  }{8\mu
_T^2} \geq\frac{q\mu_T^2}{8}
\geq\log (128 ).
\]
\upqed
\end{pf*}

We now know that if there are lots of particles at $y$ and $y$ has
reasonable potential, then we can travel from $y$ to any other point
$z$ in a suitable time. We now make sure that there are some
points---indeed, all points close enough to the origin---with lots of
particles. We make no attempt to optimize our argument and use only
simple estimates.

%
\begin{lem}\label{lenear0}
For any $\phi>0$, $\P$-almost surely, for all $T>e$,
\[
P^{\xi}\bigl(\exists z\in B\bigl(0,\log^\phi T\bigr)\dvtx  N
\bigl(z,5\log^{2\phi}T\bigr) < e^{\lambda\log^\phi T}\bigr) < c
e^{-\lambda\log^\phi T},
\]
where $\lambda= \frac{\log2}{8(d+1)}$ and $c$ is a constant
depending only on $d$ and $\phi$.
\end{lem}

\begin{pf}
We write $P^1$ to mean the law of a BRW with branching rate $1$
everywhere. Since $\P(\xi(z)\geq1)=1$, if we can prove the lemma
under $P^1$, then by an easy coupling it must hold for $\P$-almost
every environment $\xi$. Also by adjusting $c$ it suffices to consider
large~$T$.

Note that by Lemma~\ref{yulelem},
\[
P^1 \bigl(N \bigl(4\lambda\log^\phi T \bigr) <
e^{2\lambda\log
^\phi T} \bigr) \leq ce^{-\lambda\log^\phi T}.
\]
However, since $\llvert  X(4\lambda\log^{\phi} T)\rrvert  $ is stochastically
dominated by a Poisson random variable of parameter $8d\lambda\log
^{\phi} T$, we have
\begin{eqnarray*}
P \bigl(\bigl\llvert X\bigl(4\lambda\log^{\phi} T\bigr)\bigr\rrvert >
\log^{\phi} T \bigr) &\leq& E\bigl[e^{\llvert  X(4\lambda\log^{\phi} T)\rrvert  \log2}\bigr]e^{-(\log
2)\log^{\phi} T}
\\
&=& e^{8d\lambda\log^\phi T - (\log2)\log^\phi T} = e^{-8\lambda\log
^\phi T},
\end{eqnarray*}
and applying the many-to-one lemma,
\begin{eqnarray*}
&& P^1 \bigl(\exists v\in Y \bigl(4\lambda\log^{\phi} T \bigr)\dvtx  \bigl\llvert X_v\bigl(4\lambda\log^{\phi} T\bigr)\bigr
\rrvert > \log^{\phi} T \bigr)
\\
&&\qquad\leq e^{4\lambda\log^{\phi} T} P \bigl(\bigl\llvert X\bigl(4\lambda\log^{\phi}
T\bigr)\bigr\rrvert > \log^{\phi} T \bigr) \leq e^{-4\lambda\log^{\phi} T}.
\end{eqnarray*}
So (adjusting $c$ as necessary) with probability at least
$1-ce^{-\lambda\log^\phi T}$ we have at least $e^{2\lambda\log^\phi
T}$ particles spread over $B(0,\log^\phi T)$ at time $4\lambda\log
^\phi T$.

Take one such particle $v$. We now wait a further time $5\log^{2\phi}
T-4\lambda\log^\phi T$, which is at least $(2\log^\phi T)^2$ when
$T$ is large. For any $z\in B(0,\log^\phi T)$, by Lemma~\ref
{rwlem3} the probability that $v$ has a descendant at $z$ at this
time is at least $c'\log^{-d\phi} T$ for some constant $c'$. Thus, by
Lemma~\ref{lechernoff},
\begin{eqnarray*}
&& P^1 \bigl(N\bigl(z,5\log^{2\phi}T\bigr) <
\bigl(c'/2\bigr)e^{2\lambda
\log^\phi T}\log^{-d\phi} T \bigr)
\\
&&\qquad< \exp \bigl(-\bigl(c'/8\bigr)e^{2\lambda\log^\phi T}
\log^{-d\phi} T \bigr).
\end{eqnarray*}
Taking a union over all $z\in B(0,\log^\phi T)\cap\Z^d$, we get the
desired result.
\end{pf}

We\vspace*{1pt} have now established that with high probability every site within
$B(0,\log^\phi T)$ has lots of particles by time $5\log^{2\phi} T$.
Using Lemma~\ref{le0306-1} we can ensure that at least one of these
sites---call it $z_0$---has reasonably large potential. The small
problem we face is that in the definition of $h_T$, our trail of points
starts from $0$ and not from $z_0$. The following lemma helps us to get
around this fact, essentially by stating that travelling via $z_0$ does
not cost much.

%
\begin{lem}\label{levia}
Fix $T>e$. Suppose that $z\in L_T$ and $h_T(z)=\sum_{j=1}^n
q\llvert  y_{j-1}-y_j\rrvert  /\xi_T(y_j)$ where $y_0=z$ and $y_n=0$. Let
\[
n' = \cases{ 0, &\quad if $\displaystyle
\xi_T(y_j)< \mu _T^3/a(T)
~\forall j\geq1$,
\vspace*{3pt}\cr
\displaystyle\max\bigl\{j\geq1\dvtx
\xi_T(y_j)\geq\mu_T^3/a(T)
\bigr\}, &\quad otherwise.}
\]
For any $z_0\in L_T$ such that $\xi_T(z_0)\geq2\mu_T^3/a(T)$, we have
\[
q\frac{\llvert  y_{n'}\rrvert  }{\xi_T(z_0)} \leq h_T(z_0) + q
\frac{\llvert  z_0\rrvert  }{\xi_T(z_0)}.
\]
\end{lem}

\begin{pf}
First note that since $\max_{j>n'} \xi_T(y_j) \leq\frac{1}2\xi
_T(z_0)$, by Lemma~\ref{leincrease} and the triangle inequality,
\[
h_T(y_{n'}) = \sum_{j=n'+1}^n
q\frac{\llvert  y_{j-1}-y_j\rrvert  }{\xi_T(y_j)} \geq 2q\frac{\llvert  y_{n'}\rrvert  }{\xi_T(z_0)},
\]
but also,
\[
h_T(y_{n'}) \leq h_T(z_0) +
q\frac{\llvert  y_{n'} - z_0\rrvert  }{\xi_T(z_0)} \leq h_T(z_0) + q
\frac{\llvert  y_{n'}\rrvert  }{\xi_T(z_0)} + q\frac{\llvert  z_0\rrvert  }{\xi_T(z_0)}.
\]
Combining these two statements, we see that
\[
q\frac{\llvert  y_{n'}\rrvert  }{\xi_T(z_0)} \leq h_T(z_0) + q
\frac{\llvert  z_0\rrvert  }{\xi_T(z_0)}.
\]
\upqed
\end{pf}

\subsection{Particles do not arrive too late}\label{sesublate}

We are now ready to prove our main result for this section, namely that
the probability that anyone arrives late is small. As we hinted
earlier, we will apply Lemma~\ref{le2ptgrowth} at time $q\llvert  z-y\rrvert  /\xi
_T(y)$ to check that we move from $y$ to $z$ in the time allotted. (The
extra work to consider more general $s$ was not wasted, however: it
will be used when we check that the \textit{number} of particles grows as
claimed.) The rest of the proof simply involves tying up some loose ends.

%
\begin{prop}\label{prlate}
As $T\to\infty$,
\[
\P \bigl(\exists z\dvtx  H'_T(z)\wedge
t_\infty> h_T(z) + (t_\infty+1)
\log^{-1/2}T \bigr) \to0.
\]
\end{prop}

\begin{pf}
Let $\phi= \frac{3 \alpha}{2d}$ 
and $\lambda=\frac{\log2}{8(d+1)}$. We consider the following four events:
\begin{itemize}
\item$\Gamma_T$: in particular [see (\ref{eqUT4bd})],
$h_T(z)>t_\infty$ for all $z\notin B(0,\rho_T)$;
\item$N(r(T)z, 5\log^{2\phi}T)\geq\exp(\lambda\log^\phi T)$ for
all $z\in L_T(0,r(T)^{-1}\log^\phi T)$;
\item there exists $z_0 \in L_T(0,r(T)^{-1}\log^\phi T)$ such that
$\xi_T(z_0) > \frac{2\mu_T^3}{a(T)}$ and $h_T(z_0)\leq\frac
{1}{2\mu_T^2}$;
\item for all $y\neq z$ in $L_T(0,\rho_T)$ such that $\xi_T(y)\geq
\frac{\mu_T^3}{a(T)}$, we have $H'_T(z)\leq H'_T(y) + (1+\mu
_T^{-2})q\frac{\llvert  z-y\rrvert  }{\xi_T(y)}$.
\end{itemize}
We recall that $\P(\Gamma_T)\to1$ as $T\to\infty$. By Lemma~\ref
{lenear0}, the probability of the second event also tends to $1$ as
$T\to\infty$. By Lemma~\ref{le0306-1} and our choice of $\phi$,
together with Corollary~\ref{cosmalllily}, the probability of the
third event also tends to $1$ as $T\to\infty$. Finally, by applying
Lemma~\ref{le2ptgrowth} when $s=q\llvert  z-y\rrvert  /\xi_T(y)$ [note that on
$\Gamma_T$, there exists $c$ such that $\xi_T(y)\leq c\log\log T$
for\vspace*{1pt} all $y\in L_T(0,\rho_T)$] together with the fact that there are at
most $c_d^2 r(T)^{2d}\rho_T^{2d}\ll\exp(\log^2 T)$ pairs of points
$y,z\in L_T(0,\rho_T)$, the probability of the fourth event tends to
$1$ as $T\to\infty$. Thus it suffices to prove that on the
intersection of these four events, for every $z$, we have
$H'_T(z)\wedge t_\infty\leq h_T(z) + (t_\infty+1)/\mu_T^2$. In
particular, we can assume that $h_t(z) \leq t_\infty$.\vadjust{\goodbreak}

Choose $n$ and distinct $y_0,\ldots,y_n$ such that $y_0=z$, $y_n=0$
and $h_T(z) = \sum_{j=1}^n q\llvert  y_{j-1}-y_j\rrvert  /\xi_T(y_j)$. Let $n'$ be as
in Lemma~\ref{levia}, and note that by Lemma~\ref{leincrease}, $\xi
_T(y_j) \geq\xi_T(y_{n'})\geq\mu_T^3/a(T)$ for $1 \leq j \leq n'$.
Since we are working on $\Gamma_T$, we may assume that $y_j\in
B(0,\rho_T)$ for all $j$. Then from the fourth event above,
\begin{eqnarray*}
H_T'(z) & =& \sum_{j=1}^{n'}
\bigl(H_T'(y_{j-1}) - H_T'(y_j)
\bigr) + H_T'(y_{n'}) -
H_T'(z_0) + H_T'(z_0)
\\
& \leq&\sum_{j=1}^{n'} \bigl(1+
\mu_T^{-2}\bigr)q\frac{\llvert  y_{j-1}-y_j\rrvert  }{\xi
_T(y_j)} + \bigl(1+
\mu_T^{-2}\bigr)q\frac{\llvert  y_{n'}-z_0\rrvert  }{\xi_T(z_0)} +
H_T'(z_0)
\\
& \leq&\bigl(1+\mu_T^{-2}\bigr)h_T(z) +
\bigl(1+\mu_T^{-2}\bigr)q\frac
{\llvert  y_{n'}\rrvert  +\llvert  z_0\rrvert  }{\xi_T(z_0)} +
H_T'(z_0).
\end{eqnarray*}
By\vspace*{1pt} Lemma~\ref{levia} we have $q\llvert  y_{n'}\rrvert  /\xi_T(z_0)\leq h_T(z_0) +
q\llvert  z_0\rrvert  /\xi_T(z_0)$. We know from the second event above that
$H'_T(z_0)\leq\frac{5\log^{2\phi}T}{T}$, and from the third event that
$z_0$ is chosen such that $\llvert  z_0\rrvert   \leq\frac{\log^\phi T}{r(T)}$,
$h_T(z_0)\leq\frac{1}{2\mu_T^2}$ and $\xi_T(z_0) > \frac{2 \mu
_T^3}{a(T)}$. Thus when $T$ is large,
\begin{eqnarray*}
H_T'(z) & \leq&\bigl(1+\mu_T^{-2}
\bigr) \biggl(h_T(z) + h_T(z_0) + 2q
\frac
{\llvert  z_0\rrvert  }{\xi_T(z_0)} \biggr) + H_T'(z_0)
\\
& \leq&\bigl(1+\mu_T^{-2}
\bigr)h_T(z) + \bigl(1+ \mu_T^{-2}\bigr)
\biggl(\frac{1}{2\mu
_T^2} + 2 q \frac{(\log^\phi T)}{r(T)} \frac{a(T)}{ 2 \mu_T^3} \biggr) +
5 \frac{\log^{2\phi} T }{ T}
\\
& \leq&\bigl(1+\mu_T^{-2}\bigr)h_T(z) +
\mu_T^{-2} \leq h_T(z) + (t_\infty
+1)/\mu_T^{2},
\end{eqnarray*}
where we recall that $a(T) / r(T) = \log T / T$ and $\mu_T = \log
^{1/4} T$.
\end{pf}

\subsection{There are not too few particles}\label{setoofew}
We now want to show that with high probability, there are at least
about $m_T(z,s)$ particles at each site $z$, for all $s\leq t_\infty$.
Again our main tool will be Lemmas~\ref{le1ptgrowth} and~\ref
{le2ptgrowth}. Since these lemmas apply only at fixed times, and we
want to be sure that there is \textit{never} a time when the number of
particles is too small, we start by translating into continuous time.

%
\begin{lem}\label{lects2ptgrowth}
Take $y, z\in L_T$ such that $\frac{\mu_T^3}{a(T)} \leq\xi_T(y)
\leq\mu_T$. Then for large $T$,
\begin{eqnarray*}
&& P^{\xi} \biggl(\exists s\in \biggl[q\frac{\llvert  z-y\rrvert  }{\xi_T(y)},t_\infty
\biggr]\dvtx
\\
&&\hspace*{21pt}{} N \bigl(r(T)z, H'_T(y)T+\bigl(1+
\mu_T^{-2}\bigr)sT \bigr) < e^{a(T)T (\xi_T(y)s-q\llvert  z-y\rrvert   )} \biggr)
\\
&&\qquad < \exp\biggl(- \frac{1}2\log^2 T\biggr).
\end{eqnarray*}
\end{lem}

\begin{pf}
Within this proof only, we will use the shorthand
\[
N_s = N \bigl(r(T)z, H'_T(y)T+\bigl(1+
\mu_T^{-2}\bigr)sT \bigr)
\]
and
\[
E_{s,p} = \exp \bigl(a(T)T \bigl(\xi_T(y)s-q\llvert z-y
\rrvert \bigr) + p\mu _T \bigr).
\]
For each $j\geq0$, let
\[
s_j = q\frac{\llvert  z-y\rrvert  }{\xi_T(y)} + \frac{j}{4a(T)T}.
\]
Our plan is to apply Lemmas~\ref{le1ptgrowth} and~\ref{le2ptgrowth}
with $s=s_j$ for each $j$, and then to show that the number of
particles cannot drop suddenly between $s_j$ and $s_{j+1}$ for any $j$.

For any $j$,
\[
a(T)T\xi_T(y) (s_{j+1}-s_j) =
\xi_T(y)/4\leq\mu_T/4.
\]
Thus if $k= \lfloor4a(T)Tt_\infty\rfloor$,
\begin{eqnarray*}
&& P^\xi\bigl(\exists u\in\bigl[q\llvert z-y\rrvert /
\xi_T(y),t_\infty\bigr]\dvtx  N_u <
E_{u,0}\bigr)
\\
&&\qquad  \leq\sum_{j=0}^{k} P^\xi
\bigl(\exists u\in[s_j,s_{j+1}\bigr)\dvtx  N_u <
E_{s_j,
1/4})
\\
&&\qquad \leq\sum_{j=0}^{k} P^\xi(N_{s_j}
< E_{s_j,1/2})
\\
&&\quad\qquad{} + \sum_{j=0}^{k} P^\xi
\bigl(\exists u\in[s_j,s_{j+1}\bigr)\dvtx  N_u <
E_{s_j, 1/4} \mid N_{s_j} \geq E_{s_j,1/2}).
\end{eqnarray*}
A simple application of Lemma~\ref{le1ptgrowth} (if $y=z$) or Lemma
\ref{le2ptgrowth} (if $y\neq z$) tells us that the first sum is at
most $(4a(T)Tt_\infty+1)\exp(-\log^2 T)$ when $T$ is large, and so
it suffices to prove the same for the second sum.

Let
\[
I_j = \bigl[H'_T(y)T+\bigl(1+
\mu_T^{-2}\bigr)s_jT, H'_T(y)T+
\bigl(1+\mu _T^{-2}\bigr)s_{j+1}T \bigr).
\]
Note that when $T$ is large, $\llvert  I_j\rrvert  \leq\frac{1}{2a(T)}$ for each $j$.
Given that $N_{s_j} \geq E_{s_j,1/2}$, and the probability that a
particle does not move during an interval of length $\frac{1}{2a(T)}$
is $\exp(-d/a(T))$, we expect at least $E_{s_j,1/2} \exp(-d/a(T))$
particles to remain at $r(T)z$ throughout the interval $I_j$. By Lemma
\ref{lechernoff}, the event that
the number of particles that actually stay is less
than $\frac{1}2 E_{s_j,1/2} \exp(-d/a(T))$ 
has probability at most $\exp(-\frac{1}8 E_{s_j,1/2} \exp(-d/a(T)))
< \exp(-\log^2 T)$. Since
$\frac{1}2 E_{s_j,1/2} \exp(-d/a(T)) > E_{s_j,1/4}$ when $T$ is large,
this completes the proof.
\end{pf}

%
\begin{prop}\label{prgrowth}
As $T\to\infty$,
\[
\P\bigl(\exists s\in[0,t_\infty], z\dvtx  M_T(z,s) <
m_T(z,s) - \log^{-1/4} T\bigr) \to0.
\]
\end{prop}

\begin{pf} We may assume without loss of generality that $t_\infty\geq1$.
We work on $\Gamma_T$ and assume that
the event
\[
\calH'_T = \bigl\{ H'_T(y)
\wedge t_\infty\leq h_T(y) + (t_\infty+1)\mu
_T^{-2} \mbox{ for all } y \bigr\}
\]
holds. We know that $\P(\Gamma_T)\to1$, and Proposition~\ref
{prlate} tells us
that $\P( \calH_T') \rightarrow1$, so it suffices to prove our
result under these conditions. In particular we may restrict to $z\in
L_T(0,\rho_T)$.

Fix such a site $z$. Note that on $\Gamma_T$,
\begin{eqnarray*}
&&P^{\xi}\bigl(\exists s\in[0,t_\infty]\dvtx  M_T(z,s)
< m_T(z,s) - \mu _T^{-1},
\calH'_T\bigr)
\\
&&\qquad\leq \sum_{y\in L_T(0,\rho_T)} P^{\xi} \bigl(\exists s
\in [0,t_\infty]\dvtx N\bigl(r(T)z, sT\bigr)\vee1
\\
&&\hspace*{97pt}  < e^{a(T)T(\xi
_T(y)(s-h_T(y))_+-q\llvert  z-y\rrvert  - \mu_T^{-1})},
\calH_T' \bigr).
\end{eqnarray*}
Observe that if $\xi_T(y)<\frac{\mu_T^3}{a(T)}$,
then $\xi_T(y)(t_\infty-h_T(y))-q\llvert  z-y\rrvert  - \mu_T^{-1}<0$ for large $T$,
so the probability above is zero. Recall also that on $\Gamma_T$,
there exists a constant $c$ such that $\xi_T(y)\leq c\log\log T$ for
all $y\in L_T(0,\rho_T)$. We deduce that we may restrict the sum above
to $y$ such that $\frac{\log^{3/4}T}{a(T)} \leq\xi_T(y) \leq c\log
\log T$.

Fix such a site $y\in L_T$. If $s\leq H_T'(y)\wedge t_\infty+ (1+\mu
_T^{-2})q\frac{\llvert  z-y\rrvert  }{\xi_T(y)}$, then on $\calH_T'$ we have
\[
s\leq h_T(y) + (t_\infty+1)\mu_T^{-2}
+ \bigl(1+\mu_T^{-2}\bigr)q\frac
{\llvert  z-y\rrvert  }{\xi_T(y)},
\]
so
\begin{eqnarray*}
&& \xi_T(y) \bigl(s-h_T(y)\bigr) -  q\llvert z-y
\rrvert - \mu_T^{-1}
\\
&&\qquad \leq(t_\infty+1)\mu_T^{-2}
\xi_T(y) + \mu_T^{-2}q\llvert z-y\rrvert -
\mu _T^{-1}
\\
&&\qquad \leq(t_\infty+1)\mu_T^{-2}c\log\log T + 2
\mu_T^{-2}q\log\log T - \mu_T^{-1}
\end{eqnarray*}
which is negative for large $T$. Thus
\begin{eqnarray*}
&& P^{\xi} \biggl(\exists s\in \biggl[0,H_T'(y)
\wedge t_\infty+ \bigl(1+\mu _T^{-2}\bigr)q
\frac{\llvert  z-y\rrvert  }{\xi_T(y)} \biggr]\dvtx
\\
&&\hspace*{21pt} N\bigl(r(T)z, sT\bigr)\vee1 < e^{a(T)T(\xi_T(y)(s-h_T(y))_+-q\llvert  z-y\rrvert  - \mu
_T^{-1})}, \calH_T'
\biggr) = 0.
\end{eqnarray*}
As a result, it suffices to look at $s\geq H_T'(y) + (1+\mu
_T^{-2})q\frac{\llvert  z-y\rrvert  }{\xi_T(y)}$, and by substituting in $u = \frac
{s-H'_T(y)}{1+\mu_T^{-2}}$ we get that
\begin{eqnarray*}
&& P^{\xi} \biggl(\exists s\in \biggl[H_T'(y)
+ \bigl(1+\mu_T^{-2}\bigr)q\frac
{\llvert  z-y\rrvert  }{\xi_T(y)},t_\infty
\biggr]\dvtx
\\
&&\hspace*{21pt} N\bigl(r(T)z, sT\bigr)\vee1 < e^{a(T)T(\xi
_T(y)(s-h_T(y))_+-q\llvert  z-y\rrvert  - \mu_T^{-1})},
\calH_T' \biggr)
\\
&&\qquad \leq P^{\xi} \biggl(\exists u\in \biggl[q\frac{\llvert  z-y\rrvert  }{\xi
_T(y)},t_\infty
\biggr]\dvtx  N\bigl(r(T)z, H_T'(y)T+\bigl(1+
\mu_T^{-2}\bigr)uT\bigr)
\\
&&\hspace*{31pt}\qquad < e^{a(T)T(\xi_T(y)(u + u\mu_T^{-2} + H'_T(y)-h_T(y))-q\llvert  z-y\rrvert  - \mu
_T^{-1})},
\calH_T',
H_T'(y) < t_\infty \biggr)
\\
&&\qquad \leq P^{\xi} \biggl(\exists u\in \biggl[q\frac{\llvert  z-y\rrvert  }{\xi
_T(y)},t_\infty
\biggr]\dvtx
\\
&&\hspace*{54pt} N\bigl(r(T)z, H_T'(y)T+\bigl(1+
\mu_T^{-2}\bigr)uT\bigr) < e^{a(T)T(\xi
_T(y)u - q\llvert  z-y\rrvert  )} \biggr).
\end{eqnarray*}
By Lemma~\ref{lects2ptgrowth}, this is at most $\exp(-\frac{1}2\log
^2 T)$, and since there are at most $c_d^2 r(T)^2 \rho_T^2 \ll\exp
(\frac{1}2\log^2 T)$ suitable pairs of points $y,z$, the result follows.
\end{pf}

\section{Proofs of Theorems \texorpdfstring{\protect\ref{teolilypad}}{1.1}, \texorpdfstring{\protect\ref{teosupport}}{1.2} and \texorpdfstring{\protect\ref{teointermittency}}{1.3}}\label{seproofs}

It now remains to draw the results of the previous sections together.

\begin{pf*}{Proof of Theorem~\ref{teolilypad}}
The fact that
\[
\sup_{t\leq t_\infty}\sup_{z\in L_T} \bigl\llvert
M_T(z,t)-m_T(z,t)\bigr\rrvert \to0
\]
in $\P$-probability follows immediately from Propositions~\ref
{prtoomany} and~\ref{prgrowth}. We therefore concentrate on showing
that $\sup_{z\in L_T(0,R)} \llvert  H_T(z)-h_T(z)\rrvert  \to0$ in $\P$-probability.

Fix any $R,\delta,\eps>0$. Clearly it suffices to prove the theorem
when $t_\infty$ is large, and by Lemma~\ref{lesmalllily} by making
$t_\infty$ large we may ensure that $\P(\exists z\in L_T(0,R)\dvtx
h_T(z)\geq t_\infty-\delta) < \eps/2$. Now, by Proposition~\ref
{prlate}, we may choose $T_\infty$ large enough such that for any
$T\geq T_\infty$, we have
\[
\P\bigl(\exists z\dvtx  H'_T(z)\wedge t_\infty>
h_T(z) + \delta\bigr) < \eps/2.
\]
Then for $T\geq T_\infty$,
\begin{eqnarray*}
&&\P\bigl(\exists z\in L_T(0,R)\dvtx  H_T(z) >
h_T(z)+\delta\bigr)
\\
&&\qquad  \leq\P\bigl(\exists z\in L_T(0,R)\dvtx  H'_T(z)
\wedge t_\infty> h_T(z)+\delta\bigr)
\\
&&\quad\qquad{} + \P\bigl(\exists z\in L_T(0,R)\dvtx
h_T(z) \geq t_\infty- \delta\bigr) < \eps.
\end{eqnarray*}
For the lower bound on $H_T(z)$, by increasing $T_\infty$ if necessary
we may assume that for any $T\geq T_\infty$ we have $(1-\gamma
_T)t_\infty+ \delta_T \leq\delta$, where $\gamma_T$ and $\delta
_T$ are as in Section~\ref{secparameters}. By Proposition~\ref
{prearly} we may also ensure that for $T\geq T_\infty$ we have
\[
\P\bigl(\exists z\dvtx  H_T(z) < \bigl(\gamma_T
h_T(z)-\delta_T\bigr)\wedge t_\infty \bigr)<
\eps/2.
\]
Then for $T\geq T_\infty$,
\begin{eqnarray*}
&& \P\bigl(\exists z\in L_T(0,R)\dvtx  H_T(z) <
h_T(z)-\delta\bigr)
\\
&&\qquad \leq\P\bigl(\exists z\in L_T(0,R)\dvtx  h_T(z) >
t_\infty\bigr)
\\
&&\quad\qquad{} + \P\bigl(\exists z\dvtx  H_T(z) < \gamma_T
h_T(z) + (1-\gamma _T)t_\infty- \delta,
h_T(z)\leq t_\infty\bigr)
\\
&&\qquad \leq \eps/2 + \P\bigl(\exists z\dvtx  H_T(z) < \bigl(
\gamma_T h_T(z)-\delta _T\bigr)\wedge
t_\infty\bigr)
\\
&&\qquad < \eps.
\end{eqnarray*}
\upqed
\end{pf*}

In order to prove Theorem~\ref{teosupport} we first show that, with
high probability, the lilypad model does not change rapidly over small
time intervals.

%
\begin{lem}\label{lelilysupport}
For any $t_\infty,\delta,\eps>0$, there exists $\eta>0$ such that
for all large $T$,
\[
\P \biggl(s_T(t+\eta)\subseteq\bigcup
_{y\in s_T(t)} B(y,\delta) ~\forall t\leq t_\infty
\biggr)\geq1-\eps.
\]
\end{lem}

\begin{pf}
By Lemma~\ref{le0805-1} we may choose $R$ such that for any $T>e$,
\[
\P\bigl(\exists z\in L_T\setminus B(0,R)\dvtx  h_T(z)
\leq t_\infty+1\bigr) < \eps/2.
\]
Then by Lemma~\ref{le0306-1} we may choose $\Upsilon>0$ such that
for any $T>e$,
\[
\P \Bigl(\max_{z\in L_T(0,R)} \xi_T(z) > \Upsilon \Bigr)
< \eps/2.
\]
Now choose $T_\infty>e$ such that $1/r(T_\infty)<\delta/4$, and
choose $\eta< (q\delta/2\Upsilon)\wedge1$. As a result of the above
bounds, for any $T\geq T_\infty$ we have $\P(\exists z\in
s_T(t_\infty+\eta)\dvtx  \xi_T(z)>\Upsilon) < \eps$.

Fix $T>T_\infty$ and $t\leq t_\infty$, and take $x\in s_T(t+\eta
)\setminus s_T(t)$. We will show that if $\xi_T(z)\leq\Upsilon$ for
all $z\in s_T(t+\eta)$, then $d(x,s_T(t))\leq\delta$. Let $w=[x]_T$,
so that $w\in L_T$ and $t\leq h_T(w) \leq t+\eta$. Take $y_0,\ldots
,y_n \in L_T$ such that $y_0=w$, $y_n=0$, $h_T(y_j)\leq t+\eta$ for
all $j$, and
\[
h_T(w) = \sum_{j=1}^n q
\frac{\llvert  y_{j-1}-y_j\rrvert  }{\xi_T(y_j)}.
\]
Let $k = \min\{j\dvtx  y_j\in s_T(t)\}$. Then choose $y\in L_T$ such that
$\llvert  y_{k-1}-y_k\rrvert   = \llvert  y_{k-1}-y\rrvert   + \llvert  y-y_k\rrvert  $ and $y\notin s_T(t)$, but
$d(y,s_T(t))\leq1/r(T)$. That is, $y$ is the first point in $L_T$ on
the geodesic between $y_k$ and $y_{k-1}$ that is outside $s_T(t)$.
(There may be more than one such point, but any will do.) Now,
\begin{eqnarray*}
h_T(w) & =& \sum_{j=1}^{k-1} q
\frac{\llvert  y_{j-1}-y_j\rrvert  }{\xi_T(y_j)} + q\frac{\llvert  y_{k-1}-y_k\rrvert  }{\xi_T(y_k)} + \sum_{j=k+1}^n
q\frac
{\llvert  y_{j-1}-y_j\rrvert  }{\xi_T(y_j)}
\\
& =& \sum_{j=1}^{k-1} q\frac{\llvert  y_{j-1}-y_j\rrvert  }{\xi_T(y_j)} +
q\frac
{\llvert  y_{k-1}-y\rrvert  }{\xi_T(y_k)} + \sum_{j=k+1}^n q
\frac{\llvert  y_{j-1}-y_j\rrvert  }{\xi
_T(y_j)} + q\frac{\llvert  y-y_k\rrvert  }{\xi_T(y_k)}
\\
& \geq&\sum_{j=1}^{k-1} q\frac{\llvert  y_{j-1}-y_j\rrvert  }{\xi_T(y_j)}
+ q\frac
{\llvert  y_{k-1}-y\rrvert  }{\xi_T(y_k)} + h_T(y)
\\
& \geq& q\frac{\llvert  w-y\rrvert  }{\Upsilon} + h_T(y),
\end{eqnarray*}
where the last line followed from the triangle inequality plus the
assumption that $\xi_T(z)\leq\Upsilon$ for all $z\in s_T(t+\eta)$.
Thus $\llvert  w-y\rrvert  \leq(h_T(w)-h_T(y))\Upsilon/q$, and since $t\leq h_T(y)$
and $h_T(w)\leq t+\eta$, we have $\llvert  w-y\rrvert  \leq\eta\Upsilon/q < \delta
/2$. But now $d(x,s_T(t))\leq\llvert  x-w\rrvert  +\llvert  w-y\rrvert  +d(y,s_T(t)) \leq2/r(T) +
\delta/2 < \delta$ which proves our claim.
\end{pf}

\begin{pf*}{Proof of Theorem~\ref{teosupport}}
Fix $t_\infty, \delta,\eps>0$, and apply Lemma~\ref
{lelilysupport} to choose $\eta>0$ such that for all large $T$,
\[
\P \biggl(s_T(t+\eta)\subseteq\bigcup
_{y\in s_T(t)} B(y,\delta) ~\forall t\leq t_\infty
\biggr)\geq1-\eps/3.
\]

Fix $t_\infty>0$, and choose $T_\infty$ large enough so that the
above holds and, using Proposition~\ref{prearly},
\[
\P\bigl(\exists z\dvtx  H_T(z)\leq\bigl(h_T(z) - \eta
\bigr)\wedge(t_\infty+1) \bigr) < \eps/3 \qquad\mbox{for all } T\geq T_\infty.
\]
On the event $\{H_T(z)> (h_T(z) - \eta)\wedge(t_\infty+1) ~\forall z\}$, if $H_T(z)\leq t\leq t_\infty$, we must have
$h_T(z)<H_T(z)+\eta$ and thus $S_T(t)\subseteq s_T(t+\eta)$.

Increasing $T_\infty$ again as required, we can ensure that for all
$T\geq T_\infty$, by Proposition~\ref{prlate},
\[
\P\bigl(\exists z\dvtx  H_T(z)\wedge t_\infty>
h_T(z) + \eta\bigr) < \eps/3.
\]
On the event $\{H_T(z)\wedge t_\infty\leq h_T(z) + \eta~\forall z\}$, if $h_T(z)\leq t-\eta$, then $H_T(z)\leq t$, and
thus $s_T(t-\eta)\subset S_T(t)$.

We have therefore established that with probability at least $1-\eps$,
for any $t\leq t_\infty$,
\[
S_T(t)\subseteq s_T(t+\eta) \subseteq\bigcup
_{y\in s_T(t)} B(y,\delta)
\]
and
\[
s_T(t)\subset\bigcup_{y\in s_T(t-\eta)} B(y,
\delta) \subset\bigcup_{y\in S_T(t)} B(y,\delta),
\]
which proves the theorem.
\end{pf*}

Before proving Theorem~\ref{teointermittency}, we do most of the work
in the following lemma. For $z\in L_T$ and $t\geq0$, let
\[
\tilde m(z,t) = \xi_T(z) \bigl(t-h_T(z)\bigr)_+.
\]
Intuitively, $\tilde m(z,t)$ is a rescaled count of how many particles
should be born at $z$ by time $t$.

%
\begin{lem}\label{lelocal}
For any $t,\eps>0$ there exists $\delta>0$ such that for all large $T$,
\[
\P\lleft(\exists z_1,z_2\in L_T\dvtx
\matrix{
\tilde m_T(z_1,t)\geq
\tilde m_T(z_2,t)\geq\tilde m_T(y,t) ~\forall y\neq z_1,
\vspace*{3pt}\cr
\bigl\llvert \tilde m_T(z_1,t) - \tilde
m_T(z_2,t)\bigr\rrvert <\delta}
 \rright) < \eps.
\]
\end{lem}

\begin{pf}
Fix $t,\eps>0$, and assume that $T$ is large. By Lemma~\ref
{le0805-1} we may choose $R>0$ such that
\[
\P\bigl(\exists z\notin B(0,R)\dvtx  \tilde m_T(z,t)>0\bigr) < \eps/4.
\]
By Lemma~\ref{le0306-1} we may then choose $\Upsilon>0$ such that
\[
\P\bigl(\exists z\in L_T(0,R)\dvtx  \xi_T(z)>\Upsilon
\bigr) < \eps/4.
\]
We can also find $\eta>0$ such that, by Lemmas~\ref{le0306-1} and
\ref{lesmalllily},
\[
\P\bigl(\tilde m_T(z,t)\leq\eta~\forall z\bigr) <
\eps/4.
\]
For $z_1,z_2\in B(0,R)$, we are interested in the event
\begin{eqnarray*}
\calM_T(z_1,z_2) &=& \bigl\{ \bigl\llvert
m_T(z_1,t)-m_T(z_2,t)\bigr
\rrvert <\delta, \xi_T(z_1)>\eta/2t,
\xi_T(z_2)>\eta/2t,
\\
&&\hspace*{89pt}{} h_T(z_1)<t-\eta/2\Upsilon, h_T(z_2)<t-
\eta/2\Upsilon \bigr\}.
\end{eqnarray*}
This is because, provided $\delta<\frac{\eta}{2t}\wedge\frac{\eta
}{2\Upsilon} \wedge\frac{\eta}{2}$,
%
\begin{eqnarray}
&&\P \left(\exists z_1,z_2\in L_T\dvtx
\matrix{\tilde m_T(z_1,t)\geq
\tilde m_T(z_2,t)\geq\tilde m_T(y,t) ~\forall y\neq z_1,
\vspace*{3pt}\cr
\bigl\llvert \tilde m_T(z_1,t) - \tilde
m_T(z_2,t)\bigr\rrvert <\delta } \right)
\nonumber
\\
&&\qquad \leq\P\bigl(\exists z\notin B(0,R)\dvtx  \tilde m_T(z,t) > 0\bigr) +
\P\bigl(\exists z\in L_T(0,R)\dvtx  \xi_T(z)>\Upsilon
\bigr)\label{eqcalM}
\\
&&\quad\qquad{} + \P\bigl(\tilde m_T(z,t) \leq\eta~\forall z\bigr) + \sum_{z_1,z_2\in L_T(0,R)}\P\bigl(
\calM_T(z_1,z_2)\bigr).
\nonumber
\end{eqnarray}
We now estimate $\P(\calM_T(z_1,z_2))$. Suppose first that
$h_T(z_1)\leq h_T(z_2)$. Then
\begin{eqnarray*}
&& \calM_T(z_1,z_2)
\\
&&\qquad \subseteq \biggl\{ \xi_T(z_2)\in \biggl[
\frac
{\xi_T(z_1)(t-h_T(z_1))-\delta}{t-h_T(z_2)}\vee\frac{\eta}{2t}, \frac{\xi_T(z_1)(t-h_T(z_1))+\delta}{t-h_T(z_2)} \biggr],
\\
&&\hspace*{179pt} \xi_T(z_1) > \eta/2t, t-
h_T(z_2)>\eta/2\Upsilon \biggr\}.
\end{eqnarray*}
Now, $\xi_T(z_2)$ is independent of $\{h_T(z_1)\leq h_T(z_2)\}$,
$h_T(z_2)$ and $\xi_T(z_1)$. Further, given $\{h_T(z_1)\leq h_T(z_2)\}
$, $\xi_T(z_2)$ is conditionally independent of $h_T(z_1)$. Also $\P
(\xi_T(z_2)\in[\mu,\mu+\nu])$ is decreasing in $\mu$ for $\mu
\geq1$ and increasing in $\nu$. Thus
\begin{eqnarray*}
&&\P\bigl(\calM_T(z_1,z_2)\cap\bigl
\{h_T(z_1)\leq h_T(z_2)\bigr\}
\bigr)
\\
&&\qquad  \leq\P\bigl(\xi_T(z_2)\in[\eta/2t,\eta/2t+4\delta
\Upsilon/\eta]\bigr) \P\bigl(\xi_T(z_1)\geq\eta/2t\bigr)
\\
&&\qquad  = (\eta/2t)^{-2\alpha} a(T)^{-2\alpha}\biggl(1-\biggl(1+
\frac{8
t\Upsilon\delta}{\eta^2}\biggr)^{-\alpha}\biggr)
\\
&&\qquad  \leq\frac{2^{2\alpha+3}t^{2\alpha+1}\alpha\Upsilon\delta
}{a(T)^{2\alpha}\eta^{2\alpha+2}}.
\end{eqnarray*}
By symmetry we also have
\[
\P\bigl(\calM_T(z_1,z_2)\cap\bigl
\{h_T(z_2)\leq h_T(z_1)\bigr\}
\bigr) \leq\frac
{2^{2\alpha+3}t^{2\alpha+1}\alpha\Upsilon\delta}{a(T)^{2\alpha
}\eta^{2\alpha+2}},
\]
and therefore
\[
\P\bigl(\calM_T(z_1,z_2)\bigr)\leq
\frac{2^{2\alpha+4}t^{2\alpha+1}\alpha
\Upsilon\delta}{a(T)^{2\alpha}\eta^{2\alpha+2}}.
\]
Plugging this and our previous estimates into (\ref{eqcalM}), we see that
\begin{eqnarray*}
&&\P \left(\exists z_1,z_2\in L_T\dvtx
\matrix{\tilde m_T(z_1,t)\geq
\tilde m_T(z_2,t)\geq\tilde m_T(y,t) ~\forall y\neq z_1,
\vspace*{3pt}\cr
\bigl\llvert \tilde m_T(z_1,t) - \tilde
m_T(z_2,t)\bigr\rrvert <\delta}  \right)
\\
&&\qquad < 3\eps/4 + C_d^2 R^{2d}
r(T)^{2d} \frac{2^{2\alpha+4}t^{2\alpha
+1}\alpha\Upsilon\delta}{a(T)^{2\alpha}\eta^{2\alpha+2}},
\end{eqnarray*}
which, by choosing $\delta$ sufficiently small, we may ensure is at
most $\eps$.
\end{pf}

\begin{pf*}{Proof of Theorem~\ref{teointermittency}}
By Proposition~\ref{prgrowth}, for large $T$ we have
\[
\P\bigl(M_T(z,t)\geq m_T(z,t) - \log^{-1/4}T
~\forall z\bigr) > 1 - \eps/5.
\]
By Proposition~\ref{prtoomany}, for large $T$ we have
\[
\P\bigl(M_T(z,t)\leq m_T(z,t)+\log^{-1/4}T
~\forall z\bigr) > 1- \eps/5.
\]
By Lemma~\ref{le0805-1}, we may choose $R>0$ such that for $T>e$ we have
\[
\P\bigl(h_T(z) > t+1 ~\forall z\notin B(0,R)\bigr) > 1-
\eps/5,
\]
and then by Proposition~\ref{prearly}, since if $H_T(z)>t$, then
$M_T(z,t)=0$, for large $T$ we have
\[
\P\bigl(h_T(z) > t+1\mbox{ and } M_T(z,t) = 0
~\forall z\notin B(0,R)\bigr) > 1-2\eps/5.
\]
Finally, by Lemma~\ref{lelocal}, there exists $\delta>0$ such that
for large $T$ we have
\[
\P\bigl(\exists z_1\in L_T\dvtx  \tilde
m_T(z_1,t) \geq\tilde m_T(z,t) + \delta
~\forall z\in L_T\bigr) > 1-\eps/5.
\]
Therefore, with probability at least $1-\eps$, all of the above events
hold. Assume that they do all hold, and fix $z\in L_T(0,R)$. Note that
\[
m_T(z,t) = \sup_y\bigl\{\tilde
m_T(y,t) - q\llvert z-y\rrvert \bigr\},
\]
so either $m_T(z,t) \leq\tilde m_T(z_1,t)-\delta$ or $m_T(z,t) =
\tilde m_T(z_1,t)-q\llvert  z-z_1\rrvert  $. Thus if $\llvert  z-z_1\rrvert   > \frac{3}q \log^{-1/4}T$
and $T$ is large, then
\begin{eqnarray*}
M_T(z,t) &\leq& m_T(z,t) + \log^{-1/4}T \leq
m_T(z_1,t) - 2\log^{-1/4}T
\\
&\leq& M_T(z_1,t)-\log^{-1/4}T.
\end{eqnarray*}
We deduce that if $\llvert  z-z_1\rrvert   > \frac{3}q \log^{-1/4}T$ and $T$ is large, then
\[
N\bigl(r(T)z,tT\bigr) \leq e^{a(T)T M_T(z_1,t) - a(T)T \log^{-1/4}T} = N\bigl(r(T)z_1,tT\bigr)e^{-a(T)T\log^{-1/4}T}.
\]
Summing up, with probability at least $1-\eps$ we have a point $z_1\in
L_T$ such that
\begin{eqnarray*}
\sum_{z\in L_T} N\bigl(r(T)z,tT\bigr)
&\leq&\sum_{z\in L_T(z_1,\sklfrac{3}q \log^{-1/4} T)} N\bigl(r(T)z,tT\bigr)
\\
&&{} +
C_d R^d r(T)^d e^{-a(T)T\log^{-1/4}T}
N\bigl(r(T)z_1,tT\bigr).
\end{eqnarray*}
The result follows.
\end{pf*}

\section{The parabolic Anderson model}\label{sePAM}
Our aim in this section is to prove Theorem~\ref{teoPAM}. Recall that
we defined
\[
\Lambda_T(z,t) = \frac{1}{a(T)T}\log_+E^{\xi}\bigl[N
\bigl(r(T)z, tT\bigr)\bigr]
\]
and
\[
\lambda_T(z,t) = \sup_y \bigl\{
\xi_T(y)t - q\llvert y\rrvert - q\llvert z-y\rrvert \bigr\} \vee0,
\]
and that we claimed that these two objects are similar in size for all
$z$ when $T$ is large. The idea is that (in the rescaled picture) the
size of the population at $z$ is dominated by particles that look for
the site $y$ that maximizes $\xi_T(y)t - q\llvert  y\rrvert   - q\llvert  z-y\rrvert  $, run quickly
to $y$ at cost $q\llvert  y\rrvert  $, sit there breeding until just before time $t$
and thus gain a reward of $\xi_T(y)t$, and then run quickly to $z$ at
cost $q\llvert  z-y\rrvert  $.

We first rule out unfriendly environments. Define the event
\[
\Delta_T(t) = \biggl\{\exists k\geq0, \exists z\in B
\bigl(0,2^{k+1}\log\log T\bigr)\dvtx  \xi_T(z) \geq
\frac{q}{2t}2^k\log\log T \biggr\}.
\]
By Lemma~\ref{le0306-1} we know that
\begin{eqnarray*}
&&\P\biggl(\exists z\in B\bigl(0,2^{k+1}\log\log T\bigr)\dvtx
\xi_T(z) \geq\frac
{q}{2t}2^k\log\log T\biggr)
\\
&&\qquad \leq C_d e 2^{(k+1)d}(\log\log T)^d \biggl(
\frac{q}{2t} \biggr)^{-\alpha} 2^{-\alpha k}(\log\log
T)^{-\alpha}
\\
&&\qquad\leq C t^{\alpha} 2^{k(d-\alpha)}(\log\log T)^{d-\alpha}
\end{eqnarray*}
for some constant $C$, and thus for any fixed $t$, $\P(\Delta
_T(t))\to0$ as $T\to\infty$. We view $\Delta_T(t)$ as a bad event
and work on the complement, $\Delta_T(t)^c$.

\subsection{Upper bound}

%
\begin{lem}\label{lePAMupper}
For any $t_\infty>0$, there exists $T_0>0$ such that for any $T\geq
T_0$, for all $z\in L_T$ and all $t\leq t_\infty$, on $\Delta
_T(t_\infty)^c$,
\begin{eqnarray*}
&& E^{\xi}\bigl[ N\bigl(r(T)z,tT\bigr)\bigr]
\\
&&\qquad \leq\frac{1}2
+ \sum_{y\in L_T(0,\log\log T)}\! \exp
\biggl(a(T)T\biggl(\xi _T(y)t - q\llvert y\rrvert - q\llvert z-y
\rrvert
\\
&&\hspace*{220pt}{} +  \frac{(\log\log T)^3}{\log
T}\biggr) \biggr).
\end{eqnarray*}
\end{lem}

\begin{pf}
Take $z\in L_T$ and $t\leq t_\infty$. We apply the Feynman--Kac
formula and split the probability space according to the supremum of $\llvert  X(s)\rrvert  $.
\begin{eqnarray*}
E^{\xi}\bigl[N\bigl(r(T)z, tT\bigr)\bigr] &=& E^{\xi}
\bigl[e^{\int_0^{tT}\xi(X(s))\,ds} \1_{\{
X(tT)=r(T)z\}}\bigr]
\\
& =& E^{\xi}\bigl[e^{\int_0^{tT}\xi(X(s))\,ds} \1_{\{X(tT)=r(T)z, \sup_{s\leq tT} \llvert  X(s)\rrvert   \geq r(T)\log\log T\}}\bigr]
\\
&&{} + E^{\xi}\bigl[e^{\int_0^{tT}\xi(X(s))\,ds} \1_{\{
X(tT)=r(T)z, \sup_{s\leq tT} \llvert  X(s)\rrvert   < r(T)\log\log T\}}
\bigr].
\end{eqnarray*}

We check first that the term in which $\sup_{s\leq tT} \llvert  X(s)\rrvert   \geq
r(T)\log\log T$ is small.
\begin{eqnarray*}
&& E^{\xi}\bigl[e^{\int_0^{tT}\xi(X(s))\,ds} \1_{\{X(tT)=r(T)z, \sup_{s\leq tT} \llvert  X(s)\rrvert   \geq r(T)\log\log T\}}\bigr]
\\
&&\qquad  = \sum_{k=0}^\infty E^{\xi}
\bigl[e^{\int_0^{tT}\xi(X(s))\,ds} \1_{\{
X(tT)=r(T)z, \sup_{s\leq tT} \llvert  X(s)\rrvert  /(r(T)\log\log T) \in
[2^k,2^{k+1})\}}\bigr]
\\
&&\qquad  \leq\sum_{k=0}^\infty\exp \Bigl(tT
\max_{x\in
B(0,2^{k+1}r(T)\log\log T)}\xi(x)
\Bigr)
\\
&&\hspace*{46pt}{}\times P^{\xi} \Bigl(\sup_{s\leq tT} \bigl\llvert X(s)\bigr
\rrvert \geq2^k r(T)\log\log T \Bigr)
\\
&&\qquad \leq\sum_{k=0}^\infty\exp \Bigl(tT \max
_{x\in B(0,2^{k+1}r(T)\log
\log T)}\xi(x) \Bigr) J_T\bigl(t,2^k
\log\log T\bigr).
\end{eqnarray*}
Now, we know from Lemma~\ref{rwlem2} that
\[
J_T\bigl(t,2^k\log\log T\bigr) \leq\exp \bigl(-a(T)T
\bigl(2^k q\log\log T - \mathcal{E}^2_T
\bigl(t,2^k\log\log T\bigr)\bigr) \bigr)
\]
and thus on $\Delta_T(t_\infty)^c$,
\begin{eqnarray*}
&& E^{\xi}\bigl[e^{\int_0^{tT}\xi(X(s))\,ds} \1_{\{X(tT)=r(T)z, \sup_{s\leq tT} \llvert  X(s)\rrvert   \geq r(T)\log\log T\}}\bigr]
\\
&&\qquad \leq\sum_{k=0}^\infty\exp \biggl(a(T)T
\biggl(2^k \frac{q}{2}\log \log T - 2^k q\log\log
T + \mathcal{E}^2_T\bigl(t,2^k\log\log T
\bigr) \biggr) \biggr).
\end{eqnarray*}
However,
\[
\calE^2_T\bigl(t,2^k\log\log T\bigr) \leq
\frac{2^k\log\log T}{\log T}\bigl(\log t + 1 + \log(2d) + (q+1)\log\log T\bigr),
\]
so
\[
E^{\xi}\bigl[e^{\int_0^{tT}\xi(X(s))\,ds} \1_{\{X(tT)=r(T)z,
\sup_{s\leq tT} \llvert  X(s)\rrvert   \geq r(T)\log\log T\}}\bigr]\leq
\tfrac{1}2
\]
for large $T$. In particular this shows that on $\Delta_T(t_\infty
)^c$, if $\llvert  z\rrvert  \geq\log\log T$, then $\E[N(r(T)z, tT)]\leq\frac{1}2$.
We may therefore assume that $\llvert  z\rrvert  <\log\log T$.

We are now left with the term when $\sup_{s\leq tT} \llvert  X(s)\rrvert   < r(T)\log
\log T$. We further split our probability space depending on the site
of maximal potential that we visit before time $t$.
\begin{eqnarray*}
&&E^{\xi}\bigl[e^{\int_0^{tT}\xi(X(s))\,ds} \1_{\{X(tT)=r(T)z,
\sup_{s\leq tT} \llvert  X(s)\rrvert   < r(T)\log\log T\}}\bigr]
\\
&&\qquad  \leq\sum_{y\in L_T(0,\log\log T)} E^{\xi}
\bigl[e^{\int_0^{tT}\xi
(X(s))\,ds} \1_{ \biggl\{\fontsize{8.36}{12}\selectfont{\matrix{
X(tT)=r(T)z, \exists s\leq t\dvtx  X(sT)=r(T)y, \cr \sup_{s\leq t} \xi(X(sT)) = \xi
(r(T)y)}}  \biggr\}} \bigr]
\\
&&\qquad  \leq\sum_{y\in L_T(0,\log\log T)} \exp \bigl(a(T)T
\xi_T(y)t \bigr)
\\
&&\hspace*{95pt}{}\times P^{\xi}\bigl(\exists s\leq t\dvtx  X(sT)=r(T)y,
X(tT)=r(T)z\bigr)
\\
&&\qquad  \leq\sum_{y\in L_T(0,\log\log T)} \exp \bigl(a(T)T
\xi_T(y)t \bigr)J_T\bigl(t,\llvert y\rrvert
\bigr)J_T\bigl(t,\llvert z-y\rrvert \bigr)
\\
&&\qquad  \leq\sum_{y\in L_T(0,\log\log T)} \exp \bigl(a(T)T\bigl(
\xi_T(y)t - q\llvert y\rrvert - q\llvert z-y\rrvert \bigr)
\\
&&\hspace*{95pt}{}\times \exp \bigl(\mathcal{E}^2_T
\bigl(t,\llvert y\rrvert \bigr) + \mathcal {E}^2_T
\bigl(t,\llvert z-y\rrvert \bigr)\bigr) \bigr),
\end{eqnarray*}
where the last inequality uses Lemma~\ref{rwlem2}. For large $T$ and
$y\in L_T(0,\log\log T)$, we have $\mathcal{E}^2_T(t,\llvert  y\rrvert  ) \leq(\log
\log T)^3/(2\log T)$ and similarly for $\mathcal{E}^2_T(t,\llvert  z-y\rrvert  )$
since we are assuming that $\llvert  z\rrvert  <\log\log T$. This gives the result.
\end{pf}

\subsection{Lower bound}

%
\begin{lem}\label{lePAMlower}
For any $t_\infty>0$, there exists $T_0>0$ such that for any $T\geq
T_0$, for all $y,z\in L_T$ and all $t\leq t_\infty$, on $\Delta
_T(t_\infty)^c$,
\begin{eqnarray*}
&& E^{\xi}\bigl[N\bigl(r(T)z, tT\bigr)\bigr]\vee1
\\
&&\qquad \geq\exp(a(T)T\biggl(
\xi_T(y)t - q\llvert y\rrvert - q\llvert z-y\rrvert - 6
\frac{(\log\log T)^2}{\log T}\biggr).
\end{eqnarray*}
\end{lem}

\begin{pf}
Fix $t\leq t_\infty$. On $\Delta_T(t_\infty)^c$, if $\llvert  y\rrvert  \geq\log
\log T$ or if $\llvert  y\rrvert   \leq\log\log T$ and either $\llvert  z\rrvert  \geq\log\log T$
or $t\leq2/\log T$, then for large $T$,
\[
\xi_T(y)t - q\llvert y\rrvert - q\llvert z-y\rrvert - 6(\log\log
T)^2/\log T \leq0,
\]
so there is nothing to prove. We may therefore assume that $\llvert  y\rrvert  <\log
\log T$, $\llvert  z\rrvert  <\log\log T$ and $t>2/\log T$. Then by the Feynman--Kac formula,
\begin{eqnarray*}
&&E^{\xi}\bigl[N\bigl(r(T)z, tT\bigr)\bigr]
\\
&&\qquad  = E^{\xi}\bigl[e^{\int_0^{tT} \xi(X(s)) \,ds} \1_{\{X(tT)=r(T)z\}}\bigr]
\\
&&\qquad  \geq E^{\xi}\bigl[e^{\int_0^{tT} \xi(X(s)) \,ds} \1_{\{X(sT)=r(T)y
~\forall s\in[1/\log T, t-1/\log T], X(tT)=r(T)z\}}\bigr]
\\
&&\qquad \geq e^{a(T)T\xi_T(y)(t-2/\log T)}
\\
&&P^{\xi}\bigl(X(sT)=r(T)y ~\forall s\in[1/\log T, t-1/\log T],
X(tT)=r(T)z\bigr).
\end{eqnarray*}
By the Markov property,
\begin{eqnarray*}
&&P^{\xi}\bigl(X(sT) =r(T)y ~\forall s\in[1/\log T, t-1/\log T],
X(tT)=r(T)z\bigr)
\\
&&\qquad = P^{\xi}\bigl(X(T/\log T) = r(T)y\bigr)P^{\xi}\bigl(X(sT)=0 ~\forall s\in [0,t-2/\log
T]\bigr)
\\
&&\quad\qquad{}  \times P^{\xi}\bigl(X(T/\log T) = r(T) (z-y)\bigr).
\end{eqnarray*}
By Lemma~\ref{rwlem} and the fact that the probability our random
walk remains at its current location for time $s$ is the probability
that a Poisson random variable of parameter $2\,ds$ is zero, this is at least
\begin{eqnarray*}
&&\exp\bigl(-a(T)T\bigl(q\llvert y\rrvert + \calE^1_T
\bigl(1/\log T,\llvert y\rrvert \bigr)\bigr)\bigr)
\\
&&\qquad{} \times\exp\bigl(-2d(t-2/\log T)T\bigr)
\\
&&\qquad{} \times\exp\bigl(-a(T)T\bigl(q\llvert z-y\rrvert + \calE^1_T\bigl(1/\log T,\llvert z-y\rrvert \bigr)
\bigr)\bigr).
\end{eqnarray*}
It is easy to check that since $y,z\in B(0,\log\log T)$, we have
\[
\calE^1_T\bigl(1/\log T,\llvert y\rrvert \bigr)\leq2(
\log\log T)^2/\log T
\]
and
\[
\calE^1_T\bigl(1/\log T,\llvert z-y\rrvert \bigr)
\leq3(\log\log T)^2/\log T
\]
for large $T$.
Thus if $T$ is large,
\begin{eqnarray*}
&& E^{\xi}\bigl[N\bigl(r(T)z, tT\bigr)\bigr]
\\
&&\qquad \geq\exp(a(T)T\bigl(
\xi_T(y)t - q\llvert y\rrvert - q\llvert z-y\rrvert - 6(\log\log
T)^2/\log T\bigr).
\end{eqnarray*}
\upqed
\end{pf}

\subsection{Proof of Theorem \texorpdfstring{\protect\ref{teoPAM}}{1.4}}
The two estimates given by the previous two lemmas are the tools we
need to complete the proof of Theorem~\ref{teoPAM}.

\begin{pf*}{Proof of Theorem~\ref{teoPAM}}
We begin with part (i). Fix $t_\infty>0$. For an upper bound, we know
from Lemma~\ref{lePAMupper} that for all large $T$, for any $z\in
L_T$ and $t\leq t_\infty$, on $\Delta_T(t_\infty)^c$,
\begin{eqnarray*}
&& E^{\xi}\bigl[ N\bigl(r(T)z,tT\bigr)\bigr]
\\
&&\qquad \leq\frac{1}2 + \sum_{y\in L_T(0,\log\log T)} \exp
\biggl(a(T)T \biggl(\xi_T(y)t - q\llvert y\rrvert - q\llvert z-y
\rrvert
\\
&&\hspace*{221pt}{}+  \frac{(\log\log T)^3}{\log
T} \biggr) \biggr).
\end{eqnarray*}
Since $\lambda_T(z,t) \geq\sup_{y\in L_T} \{\xi_T(y)t - q\llvert  y\rrvert   -
q\llvert  z-y\rrvert  \}$, we immediately see that
\begin{eqnarray*}
&& E^{\xi}\bigl[N\bigl(r(T)z,tT\bigr)\bigr]
\\
&&\qquad  \leq\frac{1}2 + C_d (\log\log T)^d
r(T)^d \exp \biggl(a(T)T\biggl(\lambda _T(z,t) +
 \frac{(\log\log T)^3}{\log T}\biggr) \biggr),
\end{eqnarray*}
and thus
\[
\Lambda_T(z,t):= \frac{1}{a(T)T} \log_+ E^{\xi}
\bigl[N\bigl(r(T)z,tT\bigr)\bigr] \leq \lambda_T(z,t) +
\frac{(\log\log T)^3}{\log T} + \frac{1}{T}.
\]
For a lower bound, we know from Lemma~\ref{lePAMlower} that on
$\Delta_T(t_\infty)^c$, for any $y,z\in L_T$ and $t\leq t_\infty$,
\begin{eqnarray*}
&& E^{\xi}\bigl[N\bigl(r(T)z,  tT\bigr)\bigr]\vee1
\\
&&\qquad \geq\exp \bigl(a(T)T\bigl(\xi_T(y)t - q\llvert y\rrvert - q
\llvert z-y\rrvert - 6(\log\log T)^2/\log T\bigr)\bigr).
\end{eqnarray*}
Without loss of generality, we can assume $\la_T(z,t) > 0$. Then
choosing $y$ such that $\lambda_T(z,t) = \xi_T(y)t - q\llvert  y\rrvert   - q\llvert  z-y\rrvert  $,
we have
\[
E^{\xi}\bigl[N\bigl(r(T)z, tT\bigr)\bigr]\vee1 \geq\exp\bigl(a(T)T
\bigl(\lambda_T(z,t) - 6(\log \log T)^2/\log T\bigr)
\bigr)
\]
and thus
\[
\Lambda_T(z,t) \geq\lambda_T(z,t) - 6
\frac{(\log\log T)^2}{\log T}.
\]
%
Since
$\P(\Delta_T(t_\infty))\to0$ as $T\to\infty$, we have the desired result.

We now move on to part (ii). Fix $R,\eps,\delta>0$. It is easy to see
by the triangle inequality
and Lemma~\ref{leincrease} that $\tau_T(z)\leq h_T(z)$ for all $z\in
L_T$ and $T>e$, so by Lemma~\ref{lesmalllily}, there exists
$t_\infty$ such that
\[
\P\bigl(\tau_T(z)< t_\infty-\delta ~\forall z
\in L_T(0,R)\bigr) > 1- \eps/4.
\]
Moreover, by Lemma~\ref{lesmalllily}, again since $\tau_T(z) \leq h_T(z)$,
\[
\P\bigl( \tau_T(z) \leq\delta~\forall z \in
L_T\bigl(0, (\log \log T)^4/\log T\bigr) \bigr) > 1 -
\eps/4.
\]
By Lemma~\ref{le0306-1}, we can find a large $K$ such that
%
\begin{equation}
\label{eqy0K} \P\bigl(\exists y_0 \in L_T
\bigl(0,2^{-K}\bigr)\dvtx  \xi_T(y_0)
\geq4q2^{-K}/\delta\bigr) > 1 - \eps/4.
\end{equation}
Also we can choose $T$ large enough such that $\P( \Delta_T(t_\infty
)^c) > 1 - \eps/4$.
Then, with probability at least $1-\eps$, we may assume that all of
the above events hold, and
it suffices to show that then for any $z$ with $\tau_T(z)< t_\infty
-\delta$, we have
%
\begin{eqnarray}
\Lambda_T(z,t) &=& 0 \qquad\forall t \leq\tau_T(z)-
\delta\quad\mbox{and} \label{eq1605-1}
\\
\Lambda_T\bigl(z,\tau_T(z)+\delta\bigr) &>&
0.\label{eq1605-2}
\end{eqnarray}

To show~(\ref{eq1605-1}), note that if $\llvert  z\rrvert  \leq(\log\log T)^4/\log
T$, then
$\tau_T(z) \leq\delta$ by assumption, and the statement is trivial.
Therefore we may assume that $\llvert  z\rrvert  >(\log\log T)^4/\log T$. By
Lemma~\ref{lePAMupper}, on $\Delta_T(t_\infty)^c$ we have for any
$t \leq t_\infty$,
\begin{eqnarray*}
&& E^{\xi}\bigl[N\bigl( r(T)z,tT\bigr)\bigr]
\\
&&\qquad \leq\frac{1}2 + \sum_{y\in L_T(0,\log\log T)} \exp
\biggl(a(T)T\biggl(\xi _T(y)t - q\llvert y\rrvert - q\llvert z-y
\rrvert
\\
&&\hspace*{221pt}{}+  \frac{(\log\log T)^3}{\log
T}\biggr) \biggr).
\end{eqnarray*}
We claim that for every $y$,
\[
\xi_T(y) \bigl(\tau_T(z)-\delta\bigr) - q\llvert y
\rrvert - q\llvert z-y\rrvert \leq-3(\log\log T)^3/\log T,
\]
which\vspace*{1pt} is enough to guarantee that $\Lambda_T(z,t) = 0$ for all $t \leq
\tau_T(z) - \delta$. Indeed, if $\xi_T(y)\geq3(\log\log
T)^3/(\delta\log T)$, then by the definition of $\tau_T(z)$ we have
\[
\xi_T(y) \bigl(\tau_T(z)-\delta\bigr) - q\llvert y
\rrvert - q\llvert z-y\rrvert \leq- \xi_T(y)\delta \leq-3(\log\log
T)^3/ \log T.
\]
On the other hand if $\xi_T(y)< 3(\log\log T)^3/(\delta\log T)$,
since $\tau_T(z) \leq t_\infty$, we have
by the triangle inequality
\begin{eqnarray*}
&&\xi_T(y) \bigl(\tau_T(z)-\delta\bigr) - q\llvert y
\rrvert - q\llvert z-y\rrvert \leq \xi_T(y)\tau_T(z) -
q\llvert z\rrvert
\\
&&\qquad  \leq 3t_\infty(\log\log T)^3/ (\delta\log T) - q(\log\log
T)^4/\log T,
\end{eqnarray*}
which is smaller than $-3(\log\log T)^3/ \log T$ when $T$ is large.

For~(\ref{eq1605-2}), by Lemma~\ref{lePAMlower} we have that on
$\Delta_T(t_\infty)^c$, for any $y\in L_T$,
\begin{eqnarray*}
&& E^{\xi}\bigl[N\bigl(r(T)z, \bigl(\tau_T(z)+\delta\bigr)T
\bigr)\bigr]\vee1
\\
&&\qquad \geq\exp\bigl(a(T)T\bigl(\xi_T(y) \bigl(\tau_T(z)+
\delta\bigr) - q\llvert y\rrvert - q\llvert z-y\rrvert - 6(\log\log
T)^2/\log T\bigr)\bigr).
\end{eqnarray*}
If $\llvert  z\rrvert  <2^{-K}$, then choose $y = y_0$ from (\ref{eqy0K}), and note that
\[
\xi_T(y_0) \bigl(\tau_T(z)+\delta
\bigr) - q\llvert y_0\rrvert - q\llvert z-y_0\rrvert
\geq4q 2^{-K} - q 2^{-K} - 2q 2^{-K} =
q2^{-K},
\]
so $\Lambda_T(z,\tau_T(z)+\delta)\geq q2^{-K} - 6\frac{(\log\log
T)^2}{\log T}$. On the other hand if $\llvert  z\rrvert  \geq2^{-K}$, choose $y$ such
that $\tau_T(z) = \frac{q}{\xi_T(y)}(\llvert  y\rrvert  +\llvert  z-y\rrvert  )$. By assumption\vspace*{1pt} we
know that $\tau_T(z) \leq t_\infty$,
and since by the triangle inequality $\tau_T(z) \geq q\llvert  z\rrvert  /\xi_T(y)$,
we deduce that $\xi_T(y) \geq q2^{-K}/t_\infty$. Then
\[
\xi_T(y) \bigl(\tau_T(z)+\delta\bigr) - q\llvert y
\rrvert - q\llvert z-y\rrvert = \xi_T(y)\delta\geq q
\delta2^{-K}/t_\infty,
\]
and thus $\Lambda_T(z,(\tau_T(z)+\delta)T)\geq\frac{q\delta
2^{-K}}{\log\log T} - 6\frac{(\log\log T)^2}{\log T} >0$ provided
$T$ is large.

Finally, the proof of part (iii) of the theorem is essentially the same
as the proof of Theorem~\ref{teosupport}, checking that with high
probability the PAM lilypad model does not grow too fast, as in Lemma
\ref{lelilysupport}, and combining this with our knowledge of the
hitting times from part (ii) above.
\end{pf*}

\section{Comparing the BRW with the PAM}\label{secompare}

In this section we prove Theorem~\ref{teonotconnected}. We start by
showing the corresponding statements for the maximizers of the lilypad
models. As a first step, we construct conditions on the potential under
which we can control the maximum of the BRW lilypad. We will see in
Lemma~\ref{leposprob} that these conditions occur simultaneously
with positive probability uniformly in~$T$.

\subsection*{BRW Setup} Fix $T, t, \kappa>0$. Suppose that $r>0$ and $\eta
\geq8qr/t$ (we will choose $r$ and $\eta$ later) and that $R>(\frac
{2\eta t}{q})\vee3 \kappa$.
Assume that the potential $(\xi_T(z), z \in L_T)$ satisfies the
following conditions:%
\begin{longlist}[(A)]
\item[(A)] there exists a site $x$ in $L_T(0,r)$ such that $\xi_T(x)
\in[\eta, 2\eta)$;
\item[(B)] for all sites $y \in L_T(0,R) \setminus\{ x\}$, we have
that $\xi_T(y) \leq\eta/2$;
\item[(C)] $\max_{z\in L_T(0,r)}h_T(z) \leq t/8$.
\end{longlist}

%
\begin{prop}\label{propBRWpeaks}
Under assumptions \textup{(A)}, \textup{(B)}, \textup{(C)} and for $T$ sufficiently large, if $z
\in L_T$ is such that $h_T(z) \leq t$, then
\[
m_T(z,t) = m_T(x,t) - q \llvert z-x\rrvert =
\xi_T(x) \bigl( t - h_T(x)\bigr) - q \llvert z-x
\rrvert,
\]
and otherwise $m_T(z,t) = 0$. In particular, $x$ is the unique
maximizer of $m_T(\cdot,t)$.
\end{prop}

\begin{pf}
The idea of the proof is first to check that all sites outside the ball
$L_T(0,R)$ are hit after time $t$. Then we make sure that the site $x$
with large potential is hit so early that by time $t$ the lilypad has
grown far enough to ``overtake'' all other lilypads.

We first show that any site outside $L_T(0,R)$ is hit after time $t$.
Indeed, by Lemma~\ref{le0805-1} we have that
\[
\bigl\{\exists z \in L_T\setminus B(0,R)\dvtx  h_T(z)
\leq t \bigr\} \subseteq \Bigl\{ \max_{y\in L_T(0,R)}
\xi_T(y)\geq qR/t \Bigr\}.
\]
However, $qR/t > 2\eta$, so we have $h_T(z) > t$ for all $z\in
L_T\setminus B(0,R)$.

For our next task, first suppose that $\llvert  z\rrvert  <\frac{\eta t}{8q} + r$.
Then since $x\in L_T(0,r)$,
\begin{eqnarray*}
\xi_T(x) \bigl(t-h_T(x)\bigr) - q\llvert z-x\rrvert &
>& \xi_T(x)t - \xi _T(x)t/8 - \eta t/8 - 2qr
\\
& \geq&7\xi_T(x)t/8 - 3\eta t/8 \geq\xi_T(x)t/2.
\end{eqnarray*}
Since $\xi_T(x)\geq2\xi_T(y)$ for any $y\in L_T(0,R)\setminus\{x\}
$, we therefore have
\begin{eqnarray*}
\xi_T(x) \bigl(t-h_T(x)\bigr) - q\llvert z-x\rrvert &
>& \sup_{y\neq x} \bigl\{\xi_T(y)
\bigl(t-h_T(y)\bigr) \bigr\}
\\
& \geq&\sup_{y\neq x} \bigl\{\xi_T(y)
\bigl(t-h_T(y)\bigr) - q \llvert z-y\rrvert \bigr\}
\end{eqnarray*}
and hence
\[
m_T(z,t) = \xi_T(x) \bigl(t-h_T(x)
\bigr) - q\llvert z-x\rrvert = m_T(x,t) - q\llvert z-x\rrvert.
\]

Now suppose that $\llvert  z\rrvert  \geq\frac{\eta t}{8q} + r$. We claim that
$h_T(z) = h_T(x) + q\llvert  z-x\rrvert  /\xi_T(x)$. Indeed,
\[
h_T(x) + q\frac{\llvert  z-x\rrvert  }{\xi_T(x)} \leq\frac{t}{8} + q
\frac{r}{\eta
} + q\frac{\llvert  z\rrvert  }{\eta},
\]
and for any $y_0,\ldots, y_n\in L_T(0,R)\setminus\{x\}$ with $y_0=z$
and $y_n=0$, by the triangle inequality we have
\[
\sum_{j=1}^n q\frac{\llvert  y_j-y_{j-1}\rrvert  }{\xi_T(y_{j-1})} \geq q
\frac
{2}{\eta} \llvert z\rrvert \geq q\frac{\llvert  z\rrvert  }{\eta} +
\frac{t}{8} + q\frac
{r}{\eta},
\]
which proves the claim. As a result we have that for any $y \in L_T
\setminus\{z\}$,
\[
h_T(x)+ q \frac{\llvert  x-z\rrvert  }{\xi_T(x)} = h_T(z) \leq
h_T(y) + q \frac
{\llvert  y-z\rrvert  }{\xi_T(y)}.
\]
Thus, for any $y \in L_T(0,R)\setminus\{x\}$ such that $h_T(y) \leq
t$, we have that since $\xi_T(y) \leq\frac{1}2 \xi_T(x)$,
\begin{eqnarray*}
\xi_T(y) \bigl( t- h_T(y)\bigr) - q\llvert y-z
\rrvert & \leq&\xi_T(y) \biggl( t - h_T(x) - q
\frac{\llvert  x-z\rrvert  }{\xi_T(x)} \biggr)
\\
& \leq&\frac{1}2 \bigl( \xi_T(x) \bigl( t-
h_T(x) \bigr) - q \llvert x-z\rrvert \bigr).
\end{eqnarray*}
We deduce that in this case too we have
\[
m_T(z,t) = \xi_T(x) \bigl( t- h_T(x)
\bigr) - q\llvert x-z\rrvert = m_T(x,t) - q\llvert z-x\rrvert.
\]
\upqed
\end{pf}

On top of conditions \textup{(A)}, \textup{(B)} and \textup{(C)} outlined above, we now construct
two further scenarios in which the maximizers of the BRW and PAM
lilypad models agree, respectively, do not agree, at time $t$.
\begin{longlist}[(S2)]
\item[(S1)] Suppose that for all $y \notin L_T(0,R)$, we have that
\[
\xi_T(y) < \eta+ q\bigl(\llvert y\rrvert - r\bigr)/t.
\]
\item[(S2)] Suppose that there exists a point $x'\in L_T(0,R+1)
\setminus L_T(0,R)$ such that
\[
\xi_T\bigl(x'\bigr) > 2\eta+ q( R+1 )/t,
\]
and that for all $y\neq x$ such that $y\notin L_T(0,R)$, we have that
\[
\xi_T(y) < \eta+ q\bigl(\llvert y\rrvert - r\bigr)/t.
\]
\end{longlist}

Recall that
\[
\la_T(z,t) = \sup_{y \in L_T} \bigl\{
\xi_T(y) t - q \llvert y\rrvert - q\llvert y-z\rrvert \bigr\} \vee0.
\]

%
\begin{lem}\label{lelilypaddiff} Suppose \textup{(A)}, \textup{(B)}, \textup{(C)} hold.
\begin{longlist}[(ii)]
\item[(i)] If in addition \textup{(S1)} holds, then $x$ [as defined in \textup{(A)}] is
the unique maximizer of $\la_T(\cdot, t)$ and of $m_T(\cdot, t)$.
\item[(ii)] If in addition \textup{(S2)} holds, then $x$ [as defined in \textup{(A)}] is
the unique maximizer of $m_T(\cdot, t)$, whereas $x'$ is the unique
maximizer of $\la_T(\cdot,t)$ and $\llvert  x-x'\rrvert   > 2 \kappa$.
\end{longlist}
\end{lem}

\begin{pf}
Under \textup{(A)}, \textup{(B)}, \textup{(C)} we already know from Proposition~\ref
{propBRWpeaks} that $x$ is the unique maximizer of the BRW lilypad
$m_T(\cdot,t)$. Moreover, $\llvert  x\rrvert  <r$ and $\llvert  x'\rrvert  \geq R > (16r)\vee3 \kappa
$ so $\llvert  x-x'\rrvert  > 2 \kappa$. Thus it suffices to prove the statements
about $\lambda_T(\cdot,t)$. Note from the definition of $\lambda
_T(\cdot,t)$ that in particular
\[
\sup_{z \in L_T} \la_T(z,t) = \sup
_{z \in L_T} \bigl\{\xi_T(z)t - q\llvert z\rrvert \bigr
\}.
\]

\begin{longlist}[(ii)]
\item[(i)] By the above it suffices to show that $t\xi_T(x) -q\llvert  x\rrvert   > t\xi
_T(y)-q\llvert  y\rrvert  $ for all $y\in L_T\setminus\{x\}$. Take $y \in L_T
\setminus\{ x\}$. If $y \in L_T(0,R)$, then $\xi_T(y) \leq\eta/2$
and thus
\[
t \xi_T(y) - q \llvert y\rrvert \leq\eta t/2 \leq t
\xi_T(x) - \eta t/2 \leq t \xi _T(x) - 4q r < t
\xi_T(x) - q\llvert x\rrvert.
\]
On the other hand, if $\llvert  y\rrvert   \geq R$, then by assumption \textup{(S1)},
\[
\xi_T(y)t - q \llvert y\rrvert < \eta t + q\bigl(\llvert y\rrvert
-r \bigr) - q \llvert y\rrvert \leq\xi_T(x)t - q\llvert x\rrvert.
\]
Thus $x$ is the unique maximizer of $\la_T(\cdot, t)$.

\item[(ii)] Arguing as in part (i), we already know that $\la_T(y,t) \leq\la
_T(x,t)$ for all $y \neq x,x'$.
Thus we only need to show that $\lambda_T(x',t) > \lambda_T(x,t)$.
Indeed, by the assumption on $\xi_T(x')$, we have
\[
\xi_T\bigl(x'\bigr)t - q \bigl\llvert
x'\bigr\rrvert > 2\eta t + q (R+1) - q\bigl\llvert x'
\bigr\rrvert \geq\xi_T(x)t - q \llvert x\rrvert.
\]
Thus $x'$ is the unique maximizer of $\la_T(\cdot,t)$.\quad\qed
\end{longlist}\noqed
\end{pf}

Next we construct a scenario in which the support of the PAM lilypad is
disconnected.
\begin{longlist}[(S3)]
\item[(S3)] For $R$ as above, suppose there exists $x' \in L_T$ with
$2R \leq\llvert  x'\rrvert   \leq2R+1$ such that $\xi_T(x') \in((2R+1)q/t,
5Rq/2t)$. Moreover, assume that for any $y \notin L_T(0,R) \cup\{ x'\}
$ we have $\xi_T(y) < q \llvert  y\rrvert  /t$.
\end{longlist}

%
\begin{lem}\label{le0504-2}
If events \textup{(A)}, \textup{(B)} and \textup{(S3)} hold, then the support of the PAM lilypad
model is not connected at time $t$.
\end{lem}

\begin{pf}
Recall that for $z\in L_T$, we defined
\[
\tau_T(z) = \inf_{y\in L_T} \biggl\{
\frac{q}{\xi_T(y)}\bigl(\llvert y\rrvert + \llvert z-y\rrvert \bigr) \biggr\}.
\]
Suppose that $\tau(z) = q(\llvert  x'\rrvert  +\llvert  z-x'\rrvert  )/\xi_T(x')\leq t$. Then
\[
\bigl\llvert x'-z\bigr\rrvert \leq\frac{\xi_T(x')t}{q} - \bigl
\llvert x'\bigr\rrvert < \frac{5R}{2} - \bigl\llvert
x'\bigr\rrvert \leq \frac{R}{2}.
\]
In particular if $\tau(z) = q(\llvert  x'\rrvert  +\llvert  z-x'\rrvert  )/\xi_T(x')\leq t$, then
$z\notin B(0,3R/2)$. Moreover,
\[
\tau_T\bigl(x'\bigr) \leq\frac{q \llvert  x'\rrvert  }{\xi_T(x')} < q
\bigl\llvert x'\bigr\rrvert \frac{t}{(2R+1)q} \leq t.
\]

Now suppose that $\tau_T(z) = q(\llvert  y\rrvert  +\llvert  z-y\rrvert  )/\xi_T(y)\leq t$ for some
site $y\in L_T(0,R)$. By assumptions \textup{(A)} and \textup{(B)} we have $\xi_T(y)\leq
2\eta$, which combined with the triangle inequality yields
\[
\tau_T(z) \geq\frac{q\llvert  z\rrvert  }{2\eta}.
\]
We deduce that for such $z$ we have $\llvert  z\rrvert  \leq2\eta t/q < R$, and thus
$z\in B(0,R)$.

Finally, since under \textup{(S3)} for any $y \notin L_T(0,R) \cup\{x'\}$,
$\frac{q \llvert  y\rrvert  }{\xi_T(y)} > t$, we can conclude that
\begin{eqnarray*}
&& \bigl\{ z \in L_T\dvtx  \tau_T(z) \leq t \bigr\}
\\
&&\qquad =
\biggl\{ z \in L_T\dvtx  \tau_T(z) = \frac{q}{\xi_T(x')}
\bigl(\bigl\llvert x'\bigr\rrvert +\bigl\llvert z-x'
\bigr\rrvert \bigr) \leq t \biggr\}
\\
&&\quad\qquad{}\cup \biggl\{ z \in L_T\dvtx  \tau_T(z) = \inf
_{y\in L_T(0,R)} \biggl\{\frac{q}{\xi_T(y)}\bigl(\llvert y
\rrvert +\llvert z-y\rrvert \bigr) \biggr\} \leq t \biggr\}.
\end{eqnarray*}
However, by the above, the two sets on the right-hand side are
nonempty and are separated by distance at least $R/2$, which
immediately implies that the support of the PAM lilypad model at time
$t$ is disconnected.
\end{pf}

Finally, we show that our conditions on the potential are fulfilled
with positive probability.

%
\begin{lem}\label{leposprob}
For\vspace*{1pt} any $t>0$, there exist $r>0$, $\eta\geq8qr/t$ and $R>(\frac
{2\eta t}{q})\vee3 \kappa$ such that for any $i=1,2,3$, the
probability that events \textup{(A)}, \textup{(B)}, \textup{(C)} and \textup{(Si)} occur simultaneously is
bounded away from $0$ for all large $T$.
\end{lem}

\begin{pf}
To show that there exist $r,\eta,R$ such that events \textup{(A)}, \textup{(B)} and \textup{(C)}
occur simultaneously with probability bounded away from $0$, we use
similar tactics to the proof of Lemma~\ref{lesmalllily}. As in Lemma
\ref{lesmalllily} we take $\gamma\in(d/\alpha,1)$, let $B_k =
L_T(0,2^{-k})$ for $k\geq1$ and set
\[
A_k = \bigl\{\exists z\in B_k\dvtx  \xi_T(z)
\geq2^{-\gamma k} \bigr\}.
\]
Then by Lemma~\ref{le0306-1},
\[
\P\bigl( A_k^c\bigr) \leq e^{-c_d 2^{(\alpha\gamma-d)k}}.
\]
Thus we may choose $K$ such that
\[
\P \Biggl(\bigcap_{k=K}^\infty
A_k \Biggr) > 1- \frac{c_d}{8} e^{-C_d
2^\alpha} \quad\mbox{and}\quad 2^{-K} \leq \biggl(
\frac{t}{8q} \biggr)^{q+1}.
\]
As in the proof of Lemma~\ref{lesmalllily}, on the event $\bigcap_{k=K}^\infty A_k$ we have $\bar h_T(2^{-K})\leq\frac{4q}{1-2^{\gamma
-1}}2^{(\gamma-1)(K-1)}$, which by increasing $K$ if necessary we may
assume is at most $t/8$.
Now let $r = 2^{-K}$ and $\eta= r^{d/\alpha}$. By the second
condition on $K$ it is easy to check that $\eta\geq8qr/t$ as
required. Define
\[
A'_K = \bigl\{\exists x\in B_K\dvtx
\xi_T(x)\in[\eta,2\eta), \xi_T(y)\leq
\eta/2~\forall y\in B_K\setminus\{x\} \bigr\}.
\]
Then
\begin{eqnarray*}
\P\bigl( A'_K\bigr) & \geq& c_d
2^{-dK}r(T)^d\bigl(\eta^{-\alpha}a(T)^{-\alpha} -
(2\eta )^{-\alpha}a(T)^{-\alpha}\bigr)
\\
&&{}\times \bigl(1-(\eta/2)^{-\alpha}a(T)^{-\alpha}
\bigr)^{C_d
2^{-dK}r(T)^d},
\end{eqnarray*}
which for large $T$ is at least
\[
\frac{c_d}{4} r^d\eta^{-\alpha} \exp\bigl(-C_d
2^\alpha r^d \eta ^{-\alpha}\bigr) =
\frac{c_d}{4} e^{-C_d 2^\alpha},
\]
by our choice of $\eta$.
Thus
\[
\P \Biggl( A'_K \cap\bigcap
_{k=K}^\infty A_k \Biggr) >
\frac{c_d}{8} e^{-C_d 2^\alpha}.
\]
On the event $ A'_K \cap\bigcap_{k=K}^\infty A_k$, conditions \textup{(A)} and
\textup{(C)} are satisfied. Since the potential on $L_T(0,R)\setminus L_T(0,r)$
is independent of that on $L_T(0,r)$, and
\begin{eqnarray*}
\P\biggl(\xi_T(y)\leq\frac{\eta}{2} ~\forall y\in
L_T(0,R)\setminus L_T(0,r)\biggr) &\geq& \biggl(1-
\biggl(\frac{\eta}{2}\biggr)^{-\alpha
}a(T)^{-\alpha}
\biggr)^{C_d R^d r(T)^d}
\\
&\geq& c_{R,\eta}
\end{eqnarray*}
for some constant $c_{R,\eta}$ depending on $R$ and $\eta$,
conditions \textup{(A)}, \textup{(B)} and \textup{(C)} occur simultaneously with probability at
least $c_{R,\eta} c_d e^{-C_d 2^\alpha}/8$.

Since \textup{(S1)}, \textup{(S2)} and \textup{(S3)} only involve sites outside $L_T(0,R)$, they
are independent of the events above, and so it suffices to show that
for some $R>(\frac{2\eta t}{q})\vee3 \kappa$, each occurs with
positive probability. Note that for any $k\geq1$,
\begin{eqnarray*}
&&\P\bigl(\exists y\in L_T\bigl(kR,(k+1)R\bigr)\dvtx
\xi_T(y)\geq q(kR-r)/t\bigr)
\\
&&\qquad \leq C_d\bigl((k+1)^d - k^d
\bigr)R^d r(T)^d\bigl(q(kR-r)/t\bigr)^{-\alpha}a(T)^{-\alpha
}
\\
&&\qquad \leq C_d R^{d-\alpha} t^\alpha q^{-\alpha}
\frac{((k+1)^d -
k^d)}{(k-r/R)^\alpha}
\\
&&\qquad \leq C_d d 2^{d+\alpha} R^{d-\alpha} t^\alpha
q^{-\alpha} k^{d-1-\alpha}.
\end{eqnarray*}
Thus
\[
\P\bigl(\exists y\notin L_T(0,R)\dvtx  \xi_T(y) \geq q
\bigl(\llvert y\rrvert -r\bigr)/t\bigr) \leq C_d d 2^d
R^{d-\alpha}t^\alpha q^{-\alpha} \sum
_{k=1}^\infty k^{d-1-\alpha},
\]
which we can make arbitrarily small by choosing $R$ large. This in
particular establishes that \textup{(S1)} occurs with positive probability. The
fact that \textup{(S2)} and \textup{(S3)} each occurs with positive probability then
follows by essentially repeating the calculation of $\P( A'_K)$ above.
\end{pf}

%
\begin{rmk*}
We could have proved Lemma~\ref{leposprob} in a more elegant way by
introducing a scaling limit for the potential as in \cite{HMS08}, Section~2.2. We chose the more hands-on route in order to
avoid introducing a new tool at the very end of the article.
\end{rmk*}

\begin{pf*}{Proof of Theorem~\ref{teonotconnected}}
(i) Combining Lemmas~\ref{le0504-2} and~\ref{leposprob} we
know that with positive probability the support $s^{\mathrm{PAM}}_T(t)$ of
the PAM lilypad model at time $t$ is contained in two disjoint sets
that are separated by distance at least $\frac{R}{2}$. Together with
Theorem~\ref{teoPAM}(iii) this implies that the actual support
$S_T^{\mathrm{PAM}}(t)$ is also disconnected with positive probability.

(ii) By Lemma~\ref{leposprob}, there exists $\eps> 0$ such that the
probability that \textup{(A)}, \textup{(B)}, \textup{(C)} and either \textup{(S1)} or \textup{(S2)} occurs is
bounded below
by $\eps$.

From~\cite{KLMS09}, Theorem~1.3, we know that with probability at least
$1- \eps/4$, the PAM is concentrated in a single site which they call
$Z_{tT}$, in the sense that
\[
\frac{u(Z_{tT}, tT)}{ \sum_{z \in\Z^d} u(z,tT) } \geq\frac{3}4.
\]
This immediately implies that $u(\cdot,tT)$ is maximal in $Z_{tT}$,
that is, in our notation that $W^{\mathrm{PAM}}_T(t) = Z_{tT}/r(T)$.

The site $Z_{tT}$ is the maximizer of a functional $\Phi_{tT}(z)$
[defined in terms of $\llvert  z\rrvert  $, the potential $\xi(z)$ and the number of
paths leading to $z$].
Rather than stating the explicit definition, we recall the following
simplficiation.
By~\cite{MOS11}, Lemma 3.3, we can choose $N$ large enough such that
with probability at least $1 - \eps/4$, $\frac{Z_{tT} }{r(T)}
\in L_T(0, N)$ and
$\frac{\phi_{tT}(Z_{tT})}{a_T} \in[\frac{1}N, N]$.
In particular, by~\cite{MOS11}, Lemma 3.2, we know that there exists a
constant $C = C(N,q,t)$,
such that for $z = \frac{Z_{tT}}{r(T)}$,
%
\begin{equation}
\label{eq1504-1} \biggl\llvert \frac{\Phi_{tT}(r_T
z)}{a(T)} - \biggl( \xi_T(z)
- \frac{q}{t} \llvert z\rrvert \biggr)\biggr\rrvert \leq
\frac{ C
\log\log T}{\log T}.
\end{equation}
On the scenarios \textup{(A)}, \textup{(B)}, \textup{(C)} and either \textup{(S1)} or \textup{(S2)}, by the same
argument (\ref{eq1504-1}) also holds for $z = w_T^{\mathrm{PAM}}(t)$.
However, we have already seen in Lemma~\ref{lelilypaddiff} that
there exists $\delta> 0$ such that on either event \textup{(S1)} or \textup{(S2)},
we have that for all $y \in L_T$,
\[
t \xi_T(y) - q\llvert y\rrvert \leq t \xi_T\bigl(
w_T^{\mathrm{PAM}}(t) \bigr) - q \bigl\llvert w_T^{\mathrm{PAM}}(t)
\bigr\rrvert - \delta.
\]
Comparing\vspace*{1pt} with~(\ref{eq1504-1}) and using that $Z_{tT}$ is the
maximizer of $\Phi_{tT}$ shows that necessarily $w_T^{\mathrm{PAM}}(t) =
Z_{tT} / r(T)$.

Moreover, the proof of Theorem~\ref{teointermittency} shows that with
probability at least $1 - \eps/4$, the maximizers of the lilypad model and
the BRW are close, that is, $\llvert  w_T(t) - W_T(t)\rrvert   \leq\frac{3} q \log^{-1/4}(T)$.\vspace*{1pt}

Hence, by combining all of the above, with probability at least $\eps
/4$ we have that $w_T^{\mathrm{PAM}}(t) = W_T^{\mathrm{PAM}}$ and $\llvert  w_T(t) -
W_T(t)\rrvert   \leq\frac{3} q \log^{-1/4}(T)$, while at the same time \textup{(A)},
\textup{(B)}, \textup{(C)} and either \textup{(S1)} or \textup{(S2)} hold. The proof of statement (ii) is
then completed by Lemma~\ref{lelilypaddiff}, which implies that on
\textup{(S1)}, $\llvert  W_T(t) - W_T^{\mathrm{PAM}}(t)\rrvert   \leq\frac{3}q \log^{-1/4} T$, and
on \textup{(S2)}, $\llvert   W_T(t) - W_T^{\mathrm{PAM}}(t) \rrvert   \geq\kappa$.
\end{pf*}

\section*{Glossary of notation}

\mbox{}

\begin{table}[h]
\tabcolsep=0pt
\begin{tabular*}{\tablewidth}{@{\extracolsep{\fill}}@{}ll@{}}
\hline
\textbf{Notation} & \textbf{Definition/description}\\
\hline
$\alpha$ & Constant such that $\P(\xi(0) > x) = x^{-\alpha}$ for $x\geq1$\\[0.9pt]
$d$ & Dimension; we work in $\Z^d$ and $\R^d$\\[0.9pt]
$q$ & $d/(\alpha-d)$\\[0.9pt]
$a(T)$ & $(T/\log T)^q$ (rescaling of potential $\xi$)\\[0.9pt]
$r(T)$ & $(T/\log T)^{q+1}$ (spatial rescaling)\\[0.9pt]
$\xi_T(z)$ & $\xi(r(T)z)/a(T)$ (rescaled potential at $z$)\\[0.9pt]
$L_T$ & $\{z\in\R^d\dvtx  r(T)z\in\Z^d\}$\\[0.9pt]
$L_T(y,R)$ & $L_T\cap B(y,R)$\\[0.9pt]
$\llvert  \cdot\rrvert  $ & $L^1$ norm\\[0.9pt]
\hline
\end{tabular*}
\end{table}
\begin{table}[h]
\tabcolsep=0pt
\begin{tabular*}{\tablewidth}{@{\extracolsep{\fill}}@{}ll@{}}
\hline
\textbf{Notation} & \textbf{Definition/description}\\
\hline
$X(t)$ & Random walk, independent of BRW and environment \\[0.9pt]
$Y(t)$ & Set of all particles at time $t$ \\[0.9pt]
$Y(z,t)$ & Set of particles at $z$ at time $t$ \\[0.9pt]
$N(t)$ & \#Y(t) \\[0.9pt]
$N(z,t)$ & \#Y(z,t) \\[0.9pt]
$H_T(z)$ & Rescaled first time a particle hits $z$, \\[0.9pt]
& $\inf\{t>0\dvtx  N(r(T)z,tT) \geq1\}$ \\[0.9pt]
$H^*_T(z)$ & Rescaled first time $X$ hits $z$, $\inf\{t>0\dvtx  X(tT) =r(T)z\}$ \\[0.9pt]
$H'_T(z)$ & Rescaled first time there are $\geq\exp(\log^{1/4})$ particles at $z$ \\[0.9pt]
$h_T(z)$ & $\inf_{y_0,\ldots, y_n\dvtx y_0=z,y_n=0} (\sum_{j=1}^n q\frac{\llvert  y_{j-1}-y_j\rrvert  }{\xi_T(y_j)} ) \stackrel {z\neq0}{=} \inf_{y\neq z}\{h_T(y)+q\frac{\llvert  z-y\rrvert  }{\xi_T(y)}\}$ \\[0.9pt]
$M_T(z,t)$ & Rescaled $\#$ particles at $z$ at time $t$, \\[0.9pt]
& $M_T(z,t) = \frac{1}{a(T)T}\log_+ N(r(T)z,tT)$\\[0.9pt]
$m_T(z,t)$ & $\sup_{y\in\R^d} \{\xi_T(y)(t-h_T(y))-\llvert  z-y\rrvert  \}$\\[0.9pt]
$J_T(t,R)$ & Probability $X$ jumps $\geq Rr(T)$ times before $tT$\\[0.9pt]
$\calE^1_T(t,R)$ & $\frac{R}{\log T} (\log R - \log t) + \frac {2\,dt}{a(T)}$ (usually small)\\[4pt]
$\calE^2_T(t,R)$ & $\frac{R}{\log T}(\log t - \log R + 1 + \log(2d) + (q+1)\log\log T)$  (usually small)\\[0.9pt]
$\bar\xi_T(R)$ & $\max_{y\in B(0,R)} \xi_T(y)$\\[0.9pt]
$\bar h_T(R)$ & $\max_{y\in B(0,R)} h_T(y)$\\[0.9pt]
$t_\infty$ & Fixed time (we are usually only interested up to time $t_\infty$)\\[0.9pt]
$Z$ & $\{z\in L_T(0,\rho_T)\dvtx  \xi_T(z)>\bar\xi_T(\eta_T)\}$, where $\eta_T$ is small\\[0.9pt]
$\kappa(T)$ & $\#Z$\\[0.9pt]
$z_1,\ldots,z_{\kappa(T)}$ & Elements of $Z$ in increasing order of $\xi$\\[0.9pt]
$Z'$ & $\{z\in L_T\setminus L_T(0,\rho_T)\dvtx  \xi_T(z) > \bar\xi _T(\eta_T)\}$, where $\eta_T$ is small\\[0.9pt]
$z_{\kappa(T)+1},\ldots$ & Elements of $Z'$ in arbitrary order\\[0.9pt]
$t_1,t_2,\ldots$ & Usually $t_i = \gamma_T h_T(z_i) - \delta_T$ where $\gamma_T\approx1$ and $\delta_T$ is small\\[0.9pt]
$A^*_T(j,z,t)$ & $ \left\{\matrix{
H^*_T(z)\leq t, H^*_T(z_i)\geq
H^*_T(z)\wedge t_i ~\forall i\leq j, \cr  H^*_T(z_i)\geq H^*_T(z)~\forall i>j}\right\} $\\[8pt]
$G_T(j,z,s,t)$ & $E^{\xi} [\exp (T\int_0^{H^*_T(z)\wedge s} \xi(X(uT)) \,du ) \1_{A^*_T(j,z,t)} ]$\\[0.9pt]
$\bar G_T$ & $\max_{k\leq\kappa(T)} G_T(\kappa(T),z_k,t_k,t_\infty )$\\[2pt]
$\mu_T$ & $\log^{1/4} T$\\
\hline
\end{tabular*}\vspace*{-22pt}
\end{table}



\section*{Acknowledgment}
Both authors would like to thank Wolfgang K\"onig for several helpful
discussions.



%

\printaddresses

\begin{thebibliography}{28}

\bibitem{ABMY00}
%
\begin{barticle}[mr]
\bauthor{\bsnm{Albeverio},~\bfnm{S.}\binits{S.}},
\bauthor{\bsnm{Bogachev},~\bfnm{L.~V.}\binits{L.~V.}},
\bauthor{\bsnm{Molchanov},~\bfnm{S.~A.}\binits{S.~A.}} \AND
\bauthor{\bsnm{Yarovaya},~\bfnm{E.~B.}\binits{E.~B.}}
(\byear{2000}).
\btitle{Annealed moment {L}yapunov exponents for a branching random
walk in a homogeneous random branching environment}.
\bjournal{Markov Process. Related Fields}
\bvolume{6}
\bpages{473--516}.
\bid{issn={1024-2953}, mr={1805091}}
\end{barticle}
%

\bptok{imsref}%
\endbibitem

\bibitem{BCGH93}
%
\begin{barticle}[mr]
\bauthor{\bsnm{Baillon},~\bfnm{J.-B.}\binits{J.-B.}},
\bauthor{\bsnm{Cl{\'e}ment},~\bfnm{Ph.}\binits{Ph.}},
\bauthor{\bsnm{Greven},~\bfnm{A.}\binits{A.}} \AND
\bauthor{\bsnm{den Hollander},~\bfnm{F.}\binits{F.}}
(\byear{1993}).
\btitle{A variational approach to branching random walk in random environment}.
\bjournal{Ann. Probab.}
\bvolume{21}
\bpages{290--317}.
\bid{issn={0091-1798}, mr={1207227}}
\end{barticle}
%

\bptok{imsref}%
\endbibitem

\bibitem{BGK09}
%
\begin{barticle}[mr]
\bauthor{\bsnm{Bartsch},~\bfnm{C.}\binits{C.}},
\bauthor{\bsnm{Gantert},~\bfnm{N.}\binits{N.}} \AND
\bauthor{\bsnm{Kochler},~\bfnm{M.}\binits{M.}}
(\byear{2009}).
\btitle{Survival and growth of a branching random walk in random environment}.
\bjournal{Markov Process. Related Fields}
\bvolume{15}
\bpages{525--548}.
\bid{issn={1024-2953}, mr={2598127}}
\end{barticle}
%

\bptok{imsref}%
\endbibitem

\bibitem{BGK05}
%
\begin{bincollection}[mr]
\bauthor{\bsnm{Birkner},~\bfnm{Matthias}\binits{M.}},
\bauthor{\bsnm{Geiger},~\bfnm{Jochen}\binits{J.}} \AND
\bauthor{\bsnm{Kersting},~\bfnm{G{\"o}tz}\binits{G.}}
(\byear{2005}).
\btitle{Branching processes in random environment---a view on critical
and subcritical cases}.
In \bbooktitle{Interacting Stochastic Systems}
\bpages{269--291}.
\bpublisher{Springer},
\blocation{Berlin}.
\bid{doi={10.1007/3-540-27110-4_12}, mr={2118578}}
\end{bincollection}
%

\bptok{imsref}%
\endbibitem

\bibitem{CMP98}
%
\begin{barticle}[mr]
\bauthor{\bsnm{Comets},~\bfnm{F.}\binits{F.}},
\bauthor{\bsnm{Menshikov},~\bfnm{M.~V.}\binits{M.~V.}} \AND
\bauthor{\bsnm{Popov},~\bfnm{S.~Yu.}\binits{S.~Yu.}}
(\byear{1998}).
\btitle{One-dimensional branching random walk in a random environment:
A classification}.
\bjournal{Markov Process. Related Fields}
\bvolume{4}
\bpages{465--477}.
\bid{issn={1024-2953}, mr={1677053}}
\end{barticle}
%

\bptok{imsref}%
\endbibitem

\bibitem{CP07a}
%
\begin{barticle}[mr]
\bauthor{\bsnm{Comets},~\bfnm{Francis}\binits{F.}} \AND
\bauthor{\bsnm{Popov},~\bfnm{Serguei}\binits{S.}}
(\byear{2007}).
\btitle{On multidimensional branching random walks in random environment}.
\bjournal{Ann. Probab.}
\bvolume{35}
\bpages{68--114}.
\bid{doi={10.1214/009117906000000926}, issn={0091-1798}, mr={2303944}}
\end{barticle}
%

\bptok{imsref}%
\endbibitem

\bibitem{CP07b}
%
\begin{barticle}[mr]
\bauthor{\bsnm{Comets},~\bfnm{Francis}\binits{F.}} \AND
\bauthor{\bsnm{Popov},~\bfnm{Serguei}\binits{S.}}
(\byear{2007}).
\btitle{Shape and local growth for multidimensional branching random
walks in random environment}.
\bjournal{ALEA Lat. Am. J. Probab. Math. Stat.}
\bvolume{3}
\bpages{273--299}.
\bid{issn={1980-0436}, mr={2365644}}
\end{barticle}
%

\bptok{imsref}%
\endbibitem

\bibitem{CY11}
%
\begin{barticle}[mr]
\bauthor{\bsnm{Comets},~\bfnm{Francis}\binits{F.}} \AND
\bauthor{\bsnm{Yoshida},~\bfnm{Nobuo}\binits{N.}}
(\byear{2011}).
\btitle{Branching random walks in space--time random environment:
Survival probability, global and local growth rates}.
\bjournal{J. Theoret. Probab.}
\bvolume{24}
\bpages{657--687}.
\bid{doi={10.1007/s10959-009-0267-x}, issn={0894-9840}, mr={2822477}}
\end{barticle}
%

\bptok{imsref}%
\endbibitem

\bibitem{DF83}
%
\begin{barticle}[mr]
\bauthor{\bsnm{Dawson},~\bfnm{Donald}\binits{D.}} \AND
\bauthor{\bsnm{Fleischmann},~\bfnm{Klaus}\binits{K.}}
(\byear{1983}).
\btitle{On spatially homogeneous branching processes in a random environment}.
\bjournal{Math. Nachr.}
\bvolume{113}
\bpages{249--257}.
\bid{doi={10.1002/mana.19831130124}, issn={0025-584X}, mr={0725493}}
\end{barticle}
%

\bptok{imsref}%
\endbibitem

\bibitem{GMPV10}
%
\begin{barticle}[mr]
\bauthor{\bsnm{Gantert},~\bfnm{Nina}\binits{N.}},
\bauthor{\bsnm{M{\"u}ller},~\bfnm{Sebastian}\binits{S.}},
\bauthor{\bsnm{Popov},~\bfnm{Serguei}\binits{S.}} \AND
\bauthor{\bsnm{Vachkovskaia},~\bfnm{Marina}\binits{M.}}
(\byear{2010}).
\btitle{Survival of branching random walks in random environment}.
\bjournal{J. Theoret. Probab.}
\bvolume{23}
\bpages{1002--1014}.
\bid{doi={10.1007/s10959-009-0227-5}, issn={0894-9840}, mr={2735734}}
\end{barticle}
%

\bptok{imsref}%
\endbibitem

\bibitem{GK05}
%
\begin{bincollection}[mr]
\bauthor{\bsnm{G{\"a}rtner},~\bfnm{J{\"u}rgen}\binits{J.}} \AND
\bauthor{\bsnm{K{\"o}nig},~\bfnm{Wolfgang}\binits{W.}}
(\byear{2005}).
\btitle{The parabolic {A}nderson model}.
In \bbooktitle{Interacting Stochastic Systems}
\bpages{153--179}.
\bpublisher{Springer},
\blocation{Berlin}.
\bid{doi={10.1007/3-540-27110-4_8}, mr={2118574}}
\end{bincollection}
%

\bptok{imsref}%
\endbibitem

\bibitem{GM90}
%
\begin{barticle}[mr]
\bauthor{\bsnm{G{\"a}rtner},~\bfnm{J.}\binits{J.}} \AND
\bauthor{\bsnm{Molchanov},~\bfnm{S.~A.}\binits{S.~A.}}
(\byear{1990}).
\btitle{Parabolic problems for the {A}nderson model.
I.~{I}ntermittency and related topics}.
\bjournal{Comm. Math. Phys.}
\bvolume{132}
\bpages{613--655}.
\bid{issn={0010-3616}, mr={1069840}}
\end{barticle}
%

\bptok{imsref}%
\endbibitem

\bibitem{GH92}
%
\begin{barticle}[mr]
\bauthor{\bsnm{Greven},~\bfnm{Andreas}\binits{A.}} \AND
\bauthor{\bsnm{den Hollander},~\bfnm{Frank}\binits{F.}}
(\byear{1992}).
\btitle{Branching random walk in random environment: Phase transitions
for local and global growth rates}.
\bjournal{Probab. Theory Related Fields}
\bvolume{91}
\bpages{195--249}.
\bid{doi={10.1007/BF01291424}, issn={0178-8051}, mr={1147615}}
\end{barticle}
%

\bptok{imsref}%
\endbibitem

\bibitem{GKS13}
%
\begin{barticle}[mr]
\bauthor{\bsnm{G{\"u}n},~\bfnm{Onur}\binits{O.}},
\bauthor{\bsnm{K{\"o}nig},~\bfnm{Wolfgang}\binits{W.}} \AND
\bauthor{\bsnm{Sekulovi{\'c}},~\bfnm{Ozren}\binits{O.}}
(\byear{2013}).
\btitle{Moment asymptotics for branching random walks in random environment}.
\bjournal{Electron. J. Probab.}
\bvolume{18}
\bpages{no. 63, 18}.
\bid{issn={1083-6489}, mr={3078022}}
\end{barticle}
%

\bptok{imsref}%
\endbibitem

\bibitem{HR11}
%
\begin{bmisc}[auto:parserefs-M02]
\bauthor{\bsnm{Harris},~\bfnm{S.~C.}\binits{S.~C.}} \AND
\bauthor{\bsnm{Roberts},~\bfnm{M.~I.}\binits{M.~I.}}
(\byear{2011}).
\bhowpublished{The many-to-few lemma and multiple spines.
Preprint. Available at \arxivurl{arXiv:1106.4761}.}
\end{bmisc}
%

\bptok{imsref}%
\endbibitem

\bibitem{HY09}
%
\begin{barticle}[mr]
\bauthor{\bsnm{Hu},~\bfnm{Yueyun}\binits{Y.}} \AND
\bauthor{\bsnm{Yoshida},~\bfnm{Nobuo}\binits{N.}}
(\byear{2009}).
\btitle{Localization for branching random walks in random environment}.
\bjournal{Stochastic Process. Appl.}
\bvolume{119}
\bpages{1632--1651}.
\bid{doi={10.1016/j.spa.2008.08.005}, issn={0304-4149}, mr={2513122}}
\end{barticle}
%

\bptok{imsref}%
\endbibitem

\bibitem{ikeda1969branching}
%
\begin{barticle}[mr]
\bauthor{\bsnm{Ikeda},~\bfnm{Nobuyuki}\binits{N.}},
\bauthor{\bsnm{Nagasawa},~\bfnm{Masao}\binits{M.}} \AND
\bauthor{\bsnm{Watanabe},~\bfnm{Shinzo}\binits{S.}}
(\byear{1969}).
\btitle{Branching {M}arkov processes. {III}}.
\bjournal{J. Math. Kyoto Univ.}
\bvolume{9}
\bpages{95--160}.
\bid{issn={0023-608X}, mr={0246376}}
\end{barticle}
%

\bptok{imsref}%
\endbibitem

\bibitem{KW14}
%
\begin{bmisc}[auto:parserefs-M02]
\bauthor{\bsnm{K\"onig},~\bfnm{W.}\binits{W.}}
(\byear{2015}).
\bhowpublished{The parabolic {A}nderson model.
Preprint. Available at \surl{www.wias-berlin.de/people/koenig/}.}
\end{bmisc}
%

\bptok{imsref}%
\endbibitem

\bibitem{KLMS09}
%
\begin{barticle}[mr]
\bauthor{\bsnm{K{\"o}nig},~\bfnm{Wolfgang}\binits{W.}},
\bauthor{\bsnm{Lacoin},~\bfnm{Hubert}\binits{H.}},
\bauthor{\bsnm{M{\"o}rters},~\bfnm{Peter}\binits{P.}} \AND
\bauthor{\bsnm{Sidorova},~\bfnm{Nadia}\binits{N.}}
(\byear{2009}).
\btitle{A two cities theorem for the parabolic {A}nderson model}.
\bjournal{Ann. Probab.}
\bvolume{37}
\bpages{347--392}.
\bid{doi={10.1214/08-AOP405}, issn={0091-1798}, mr={2489168}}
\end{barticle}
%

\bptok{imsref}%
\endbibitem

\bibitem{LL10}
%
\begin{bbook}[mr]
\bauthor{\bsnm{Lawler},~\bfnm{Gregory~F.}\binits{G.~F.}} \AND
\bauthor{\bsnm{Limic},~\bfnm{Vlada}\binits{V.}}
(\byear{2010}).
\btitle{Random Walk: A Modern Introduction}.
\bseries{Cambridge Studies in Advanced Mathematics}
\bvolume{123}.
\bpublisher{Cambridge Univ. Press},
\blocation{Cambridge}.
\bid{doi={10.1017/CBO9780511750854}, mr={2677157}}
\end{bbook}
%

\bptok{imsref}%
\endbibitem

\bibitem{M11}
%
\begin{bincollection}[mr]
\bauthor{\bsnm{M{\"o}rters},~\bfnm{Peter}\binits{P.}}
(\byear{2011}).
\btitle{The parabolic {A}nderson model with heavy-tailed potential}.
In \bbooktitle{Surveys in Stochastic Processes}
\bpages{67--85}.
\bpublisher{Eur. Math. Soc.},
\blocation{Z\"urich}.
\bid{doi={10.4171/072-1/4}, mr={2883854}}
\end{bincollection}
%

\bptok{imsref}%
\endbibitem

\bibitem{MOS11}
%
\begin{barticle}[mr]
\bauthor{\bsnm{M{\"o}rters},~\bfnm{Peter}\binits{P.}},
\bauthor{\bsnm{Ortgiese},~\bfnm{Marcel}\binits{M.}} \AND
\bauthor{\bsnm{Sidorova},~\bfnm{Nadia}\binits{N.}}
(\byear{2011}).
\btitle{Ageing in the parabolic {A}nderson model}.
\bjournal{Ann. Inst. Henri Poincar\'e Probab. Stat.}
\bvolume{47}
\bpages{969--1000}.
\bid{doi={10.1214/10-AIHP394}, issn={0246-0203}, mr={2884220}}
\end{barticle}
%

\bptok{imsref}%
\endbibitem

\bibitem{Mueller08a}
%
\begin{barticle}[mr]
\bauthor{\bsnm{M{\"u}ller},~\bfnm{Sebastian}\binits{S.}}
(\byear{2008}).
\btitle{A criterion for transience of multidimensional branching
random walk in random environment}.
\bjournal{Electron. J. Probab.}
\bvolume{13}
\bpages{1189--1202}.
\bid{doi={10.1214/EJP.v13-517}, issn={1083-6489}, mr={2430704}}
\end{barticle}
%

\bptok{imsref}%
\endbibitem

\bibitem{Mueller08b}
%
\begin{barticle}[mr]
\bauthor{\bsnm{M{\"u}ller},~\bfnm{S.}\binits{S.}}
(\byear{2008}).
\btitle{Recurrence and transience for branching random walks in an
I.{I}.{D}. random environment}.
\bjournal{Markov Process. Related Fields}
\bvolume{14}
\bpages{115--130}.
\bid{issn={1024-2953}, mr={2433298}}
\end{barticle}
%

\bptok{imsref}%
\endbibitem

\bibitem{SmithWilkinson69}
%
\begin{barticle}[mr]
\bauthor{\bsnm{Smith},~\bfnm{Walter~L.}\binits{W.~L.}} \AND
\bauthor{\bsnm{Wilkinson},~\bfnm{William~E.}\binits{W.~E.}}
(\byear{1969}).
\btitle{On branching processes in random environments}.
\bjournal{Ann. Math. Statist.}
\bvolume{40}
\bpages{814--827}.
\bid{issn={0003-4851}, mr={0246380}}
\end{barticle}
%

\bptok{imsref}%
\endbibitem

\bibitem{HMS08}
%
\begin{barticle}[mr]
\bauthor{\bsnm{van~der Hofstad},~\bfnm{Remco}\binits{R.}},
\bauthor{\bsnm{M{\"o}rters},~\bfnm{Peter}\binits{P.}} \AND
\bauthor{\bsnm{Sidorova},~\bfnm{Nadia}\binits{N.}}
(\byear{2008}).
\btitle{Weak and almost sure limits for the parabolic {A}nderson model
with heavy tailed potentials}.
\bjournal{Ann. Appl. Probab.}
\bvolume{18}
\bpages{2450--2494}.
\bid{doi={10.1214/08-AAP526}, issn={1050-5164}, mr={2474543}}
\end{barticle}
%

\bptok{imsref}%
\endbibitem

\bibitem{Wilkinson67}
%
\begin{bmisc}[mr]
\bauthor{\bsnm{Wilkinson},~\bfnm{William~Edwin}\binits{W.~E.}}
(\byear{1968}).
\bhowpublished{B{ranching} processes in stochastic environments.
PhD thesis, Univ. of North Carolina at Chapel Hill, 1967.}
\bid{mr={2617450}}
\bptnote{check year}%
\end{bmisc}
%

\bptok{imsref}%
\endbibitem

\bibitem{Yoshida08}
%
\begin{barticle}[mr]
\bauthor{\bsnm{Yoshida},~\bfnm{Nobuo}\binits{N.}}
(\byear{2008}).
\btitle{Central limit theorem for branching random walks in random
environment}.
\bjournal{Ann. Appl. Probab.}
\bvolume{18}
\bpages{1619--1635}.
\bid{doi={10.1214/07-AAP500}, issn={1050-5164}, mr={2434183}}
\end{barticle}
%

\bptok{imsref}%
\endbibitem
\end{thebibliography}
\end{document}